\newtheorem{prop}{Proposition}[section]
\newtheorem{lemma}[prop]{Lemma}
\newtheorem{remark}[prop]{Remark}
\newtheorem{cor}[prop]{Corollary}
\newtheorem{theorem}[prop]{Theorem}
\newtheorem{definition}[prop]{Definition}
\newcommand {\ignore} [1] {}
\newcommand{\N}{\mathbb{N}}
\newcommand{\Z}{\mathbb{Z}}
\newcommand{\R}{\mathbb{R}}
\newcommand{\F}{\mathbb{F}}
\newcommand{\T}{\mathcal{T}}
\newcommand{\U}{\mathbb{U}}
\newcommand{\eps}{\varepsilon}
\newcommand{\leapp}{\lessapprox}
\newcommand{\geapp}{\gtrapprox}
\newcommand{\app}{\approx}
\newcommand{\lesim}{\lesssim}
\newcommand{\gesim}{\gtrsim}
\newcommand{\bp}{\mathbf{p}}
\newcommand{\bT}{\mathbf{T}}
\newcommand{\cD}{\mathcal{D}}
\newcommand{\cB}{\mathcal{B}}
\newcommand{\cS}{\mathcal{S}}
\newcommand{\cT}{\mathcal{T}}
\newcommand{\cQ}{\mathcal{Q}}
\newcommand{\cP}{\mathcal{P}}
\DeclareMathOperator{\dir}{dir}
\newcommand{\Tbad}{\T_{\mathrm{bad}}}
\newcommand{\bD}{\mathbf{D}}
\title{Furstenberg sets estimate in the plane}
\author{Kevin Ren}
\address{Department of Mathematics, Princeton University}
\email{kevinren@princeton.edu}
\author{Hong Wang}
\address{Courant institute of mathematical sciences, New York University}
\email{hw3639@nyu.edu}
\begin{document}

\maketitle

\begin{abstract}
    We fully resolve the Furstenberg set conjecture in $\mathbb{R}^2$, that a $(s, t)$-Furstenberg set has Hausdorff dimension $\ge \min(s+t, \frac{3s+t}{2}, s+1)$. As a result, we obtain an analogue of Elekes' bound for the discretized sum-product problem and resolve an orthogonal projection question of Oberlin.
\end{abstract}

% \section*{Tasks}
% Hong:

% Flesh out Section 4 induction, special case lemma

% high-low lemma (cite + fill in details)

% Read introduction

% Kevin:

% Write introduction (+ citations)

% Change Lemmas in Section 2

% Proofread pigeonholing

% Delete Section 5 except Lemma 5.4, 5.8, 5.9

% Add multiscale lemma from kevin.tex

\section{Introduction}
For $s \in (0, 1]$ and $t \in (0, 2]$, a $(s,t)$-Furstenberg set is a set $E\subset\mathbb{R}^2$ with the following property: there exists a family $\mathcal{L}$ of lines with $\dim_H \mathcal{L}\geq t$ such that $\dim_H(E\cap \ell)\geq s$ for all $\ell \in \mathcal{L}$. The Hausdorff dimension $\dim_H\mathcal{L}$ is defined as the Hausdorff dimension of $\mathcal{L}$ in the space $\mathcal{A}(2,1)$ of all affine lines in $\mathbb{R}^2$ (see \cite[Section 3.16]{mattila1999geometry}). The Furstenberg set question asks: what is the minimum possible dimension of $\dim_H (E)$?

Motivated by his work on $\times 2, \times 3$-invariant sets \cite{furstenberg1970intersections}, Furstenberg considered a set $E$ with the following property: there exists a family of lines $\mathcal{L}$, one in every direction, such that $\dim_H(E\cap \ell)\geq s$ for all $\ell \in \mathcal{L}$. He conjectured (in unpublished work, but recalled by Wolff in \cite[Remark 1.5]{wolff1999recent}) that such a set $E$ (which is a special $(s, 1)$-Furstenberg set) has Hausdorff dimension $\ge \frac{3s+1}{2}$. Furstenberg's conjecture was later generalized to $(s, t)$-Furstenberg sets: any $(s,t)$-Furstenberg set $E\subset \mathbb{R}^2$ has Hausdorff dimension
\begin{equation}
    \dim_H E \geq \min \{ s+t, \frac{3s+t}{2}, s+1\}.
\end{equation}
As observed by Wolff \cite{wolff1999recent}, the conjectured bound $\frac{3s+t}{2}$ is optimal  for the  continuum version of a sharp example for Szemer\'edi-Trotter theorem \cite{szemeredi1983extremal}.
The bound $s+t$ for $0 \leq t\leq s \leq 1$ was proved by Lutz-Stull \cite{lutz2020bounding} and H\'era-Shmerkin-Yavicoli \cite{hera2021improved},  and the bound $s+1$ for $s+t\geq 2$ was proved recently by Fu and the first author \cite{fu2021incidence}. Wolff \cite{wolff1999recent} gave a sharp construction for $\frac{3s+t}{2}$ in the case $t = 1$ based on a grid example (which easily generalizes to other values of $t$), the sharp construction for $s+t$ can be taken as the product of a $t$-dim Cantor set and an $s$-dim Cantor set, and Fu and the first author \cite{fu2021incidence} gave a sharp construction for $s+1$ (the product of an interval and a $s$-dim Cantor set).

The main  case $s<t<2-s$ remains open. Elementary bounds due to Wolff \cite{wolff1999recent}  in the case $t = 1$,  Molter-Rela \cite{molter2012furstenberg}, H\'{e}ra \cite{hera2019hausdorff}, and Lutz-Stull \cite{lutz2020bounding} give 
\[
\dim_H E \geq \max(t/2+s, s + \min(s, t)). 
\]
Meanwhile, an $\eps$-improvement $\dim_H (E) \ge 2s+\eps(s, t)$ was proven in the special case $(s, t) = (1/2, 1)$ by Katz-Tao \cite{katz2001some} and Bourgain \cite{bourgain2003erdHos}, then for general $(s, 2s)$ by H\'{e}ra-Shmerkin-Yavicoli \cite{hera2021improved}.  Explicit improvement  for $(s, 2s)$-Furstenberg sets  was obtained by Benedetto-Zahl \cite{di2021new}. 

A breakthrough by Orponen-Shmerkin \cite{orponen2021hausdorff} proves an $\eps$-improvement over $2s$ for general $(s, t)$ with $s < t$ (see Section~\ref{section: roadmap} for how this result leads to recent progress on several problems). Later, work of Shmerkin and the second author \cite{shmerkin2022dimensions} provides an $\eps$-improvement over the bound $s + \frac{t}{2}$ using ideas in \cite{orponen2021hausdorff}.  Very recently, Orponen and Shmerkin \cite{orponen2023projections} made another breakthrough and proved $\dim_H E \geq \max(t/2 +s, s + \min(s, t)) + C(t,s)$ for some good explicit constant $C(t,s)$ depending only on $s$ and $t$ which is not far from the conjecture. In this paper, we prove the Furstenberg conjecture on the Hausdorff dimension of $(s,t)$-Furstenberg sets.

\begin{theorem}\label{thm: Furstenberg}
    Fix $s \in (0, 1]$ and $t \in (0, 2]$. Any $(s, t)$-Furstenberg set $F \subset \R^2$ has Hausdorff dimension
    \begin{equation*}
        \dim_H F \ge \min\{s+t, \frac{3s+t}{2}, s+1\}.
    \end{equation*}
\end{theorem}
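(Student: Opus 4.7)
The plan is to reduce Theorem~\ref{thm: Furstenberg} to a discretized statement about $(\delta, s, t)$-configurations of $\delta$-balls and $\delta$-tubes and then focus on the critical regime $s < t < 2-s$, where the target bound is $\frac{3s+t}{2}$. The other two regimes, $t \le s$ (target $s+t$) and $s+t \ge 2$ (target $s+1$), are already covered by the results of Lutz--Stull, H\'era--Shmerkin--Yavicoli, and Fu--Ren cited in the excerpt, and a standard product/Cantor-set factorization argument should let us deduce the full range of $(s,t)$ from the critical case together with these endpoints. After a Frostman-type discretization and pigeonholing, it suffices to show that for every $\eps > 0$ there is $\delta_0$ such that for $\delta < \delta_0$, every set $\calT$ of $\sim \delta^{-t}$ essentially distinct $\delta$-tubes, each incident to $\sim \delta^{-s}$ $\delta$-balls from a set $P \subset [0,1]^2$, satisfies $|P| \gtrsim \delta^{-(3s+t)/2 + \eps}$.

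The heart of the proof should be a multi-scale induction on $\delta$. The idealized endpoint is the Szemer\'edi--Trotter incidence count $I(P, \calT) \lesssim (|P||\calT|)^{2/3}\delta^{-1/3} + |P| + |\calT|$, which gives exactly $\frac{3s+t}{2}$ provided the configuration has no concentration at intermediate scales. If concentration is present, then at some intermediate scale $\Delta \in (\delta, 1)$ many $\delta$-tubes cluster inside fatter $\Delta$-tubes. The natural move is to split the configuration into a coarse $\Delta$-configuration of fat tubes and, inside each fat tube, an affinely rescaled $\delta/\Delta$-configuration; apply the induction hypothesis at both scales; and combine. The payoff $\frac{3s+t}{2}$ has the algebraic virtue of being stable under this rescaling, so one genuinely expects the arithmetic to close.

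The critical geometric input needed to make the induction work is a sharp projection / thin-tubes theorem that strengthens the $\eps$-improvement of Orponen--Shmerkin \cite{orponen2023projections}: a $t$-dimensional family of directions must project a Frostman measure of dimension $s < t$ to have dimension essentially equal to the corresponding Furstenberg bound, with only a $\delta^{-\eps}$ loss rather than a fixed polynomial loss. I would try to prove this by combining a high-low/broad-narrow decomposition, an $ABC$-type discretized sum-product estimate, and the projection machinery of \cite{orponen2021hausdorff,shmerkin2022dimensions,orponen2023projections}, bootstrapping the known $\eps$-gain over $s + \min(s,t)/2$ all the way to the Furstenberg exponent.

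The main obstacle is the bookkeeping of the multi-scale induction rather than any single estimate. One must avoid losing a fixed positive amount of exponent at each of the $O(\log(1/\delta)/\log(1/\Delta))$ scales (otherwise the iteration degrades below $\frac{3s+t}{2}$), and simultaneously maintain that the coarse and fine sub-configurations produced at each splitting are regular enough that the projection theorem may be reapplied at the next scale. Standard pigeonholing delivers only approximate regularity, so a robust notion---most likely phrased as uniform branching of the tube tree, together with a "two-ends" reduction in the concentrated direction---needs to be set up at the outset and carefully preserved through the recursion; this uniformization step is where I would expect to spend most of the technical effort.
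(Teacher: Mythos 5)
Your high-level framing is compatible with the paper (discretize, pigeonhole to a $\delta$-configuration, multi-scale induction with the Orponen--Shmerkin machinery from \cite{orponen2021hausdorff,orponen2023projections} as one of the inputs, and worry about exponent loss per scale), but the proposal misses the specific dichotomy that makes the paper work, and where you say the real input should come from is essentially circular.

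The concrete gap is this: you propose to obtain the needed geometric input by ``bootstrapping'' the Orponen--Shmerkin $\eps$-improved projection theorem ``all the way to the Furstenberg exponent.'' That is not a strategy so much as a restatement of the goal --- a sharp thin-tubes/projection theorem for arbitrary $(\delta,t,\delta^{-\eta})$-sets \emph{is} the Furstenberg estimate, so there is nothing to induct against. What the paper actually does is accept the Orponen--Shmerkin result for \emph{almost AD-regular} sets (Theorem~\ref{prop:furst-ADreg}, used as a black box, not bootstrapped) and then supply the genuinely new estimate for the complementary structural case: \emph{semi-well-spaced} configurations where $\cP$ behaves like a $(2-s)$-dimensional set at high scales and a Katz--Tao $s$-set at low scales (Proposition~\ref{prop: key}). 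That case is not handled by projection machinery or $ABC$ sum-product at all; it is a Fourier-analytic high-low argument (Lemma~\ref{lem:r-rich}), a generalization of Guth--Solomon--Wang's well-spaced incidence theorem, proved by its own induction on scales. The glue between the two regimes is a decomposition of the branching function of $\cP$ into intervals that are either $\eps$-linear (hence AD-regular) or $\eps$-superlinear with the specific convexity profile of \eqref{eq: fsemiwellspaced} (hence semi-well-spaced); that is Proposition~\ref{prop:multiscale}, a merged refinement of \cite[Lemmas 2.6--2.7]{orponen2023projections}. Your proposal names neither the semi-well-spaced case nor this decomposition, and without them the induction you describe has no base to close on.

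Two smaller inaccuracies: (i) your claim that Szemer\'edi--Trotter is the ``idealized endpoint'' of the induction does not match the paper --- the paper never invokes Szemer\'edi--Trotter; the sharp incidence count in the favorable case comes from the high-low method (Proposition~\ref{prop:high low}) and the weighted $L^2$-energy estimate of Lemma~\ref{lem:r-rich weaker}; and (ii) your remark that a ``product/Cantor-set factorization'' recovers the full range of $(s,t)$ from the critical regime plus the known endpoints is not how the paper treats the other regimes --- the branching-function decomposition already allows the slope $t_j$ of each piece to range over all of $[s,2-s]$, and the $t\le s$ and $s+t\ge 2$ cases are handled separately by \cite[Corollary 2.14]{orponen2021hausdorff} and Proposition~\ref{prop:s+t=2} respectively, not by a product construction.
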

The $(s,t)$-Furstenberg problem is known to be connected to the discretized sum-product problem and projection problem, as shown in Section~\ref{section: orthogonalproj}, Section~\ref{section: sumproduct},  and \cite[Section 6]{orponen2023projections}. 

Recently, Wang and Wu \cite{wang2024restriction} proved a ``two-ends'' generalization of our Theorem \ref{thm: Furstenberg} and used it with Bourgain-Demeter decoupling theory to make progress on Stein's Fourier restriction conjecture. Their proof uses Theorem~\ref{thm: Furstenberg} as a black box and combines it with multi-scale analysis and induction on scales; we refer the reader to that paper and \cite{demeter2024szemer} for more details.

% *******
% The $(s,t)$-Furstenberg problem  is known to be connected to the discretized sum-product problem and projection problem, as shown in our introduction and \cite[Section 6]{orponen2023projections}.  

\subsection{Exceptional set estimate for orthogonal projections}\label{section: orthogonalproj}
 Let $K \subset \R^2$ be an arbitrary Borel set and $0 \le u \le \min \{ \dim_H K, 1\}$. The exceptional set problem of orthogonal projections asks for the optimal upper bound on 
\[
 \dim_H \{ \theta \in S^1 : \dim_H \pi_\theta (K) < u \}.
\]
In 1968, 
Kaufman \cite{kaufman1968hausdorff} proved that 
\[
  \dim_H \{ \theta \in S^1 : \dim_H \pi_\theta (K) < u \} \le u, 
\]
sharpening a result of Marstrand \cite{marstrand1954some}. 
In 2000,  Peres-Schlag \cite{peres2000smoothness} showed  that when $\dim_H K >1 $, one can improve the estimate to 
\[\dim_H \{ \theta \in S^1 : \dim_H \pi_\theta (K) < u \} \le \max\{ u +1-\dim_H K , 0\}.\]
Kaufman's bound is sharp when $u=\dim_H K$ and the Peres-Schlag bound is sharp when $u=1$. In fact, the case $u=1$ was obtained previously by Falconer \cite{falconer1982hausdorff}.   For general cases, Oberlin \cite{oberlin2012restricted} conjectured that the right-hand side should be $ \max\{ 2u - \dim_H K, 0 \}.$

Further improvements on the exceptional set estimate originate from Bourgain's projection theorem \cite{bourgain2010discretized},  which says
\begin{equation}\label{eq: Bourgainproj}
  \dim_H \{ \theta \in S^1 : \dim_H \pi_\theta (K) \leq \dim_H K/2 \}=0. 
\end{equation}

Bourgain's projection theorem concerns the deep nature of real numbers and has been used in Bourgain-Furman-Lindenstrauss-Mozes \cite{bourgain2011stationary} on random walk on torus  and in Katz-Zahl \cite{katz2019improved} on the Hausdorff dimension of Kakeya sets in $\mathbb{R}^3$. Recent improvements  \cite{orponen2021hausdorff, orponen2023projections} on the exceptional set estimate are associated to the improvements on the Furstenberg sets problem.

As a corollary of Theorem~\ref{thm: Furstenberg} (in fact, its discretized version Theorem~\ref{Thm: main}), we prove Oberlin's conjecture \cite[(1.8)]{oberlin2012restricted} on the exceptional sets of the orthogonal projections, whose AD-regular version was established in \cite[Theorem 1.13]{orponen2023projections}. This deduction is analogous to the procedure used to establish \cite[Theorem 1.13]{orponen2023projections} (see \cite[Section 6.1]{orponen2023projections}), so we omit the details.

% As a corollary, we obtain new bounds for the sum-product problem and projection problem, which establish the AD-regular results in Theorems 1.14 and 1.22 of \cite{orponen2023projections} for general sets.

\begin{theorem}\label{thm:projections}
    Let $K \subset \R^2$ be an arbitrary Borel set. Then for all $0 \le u \le \min \{\dim_H (K), 1\}$, we have
    \begin{equation*}
        \dim_H \{ \theta \in S^1 : \dim_H \pi_\theta (K) < u \} \le \max\{ 2u - \dim_H K, 0 \}.
    \end{equation*}
\end{theorem}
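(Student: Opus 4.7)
The plan is to derive Theorem~\ref{thm:projections} from the discretized Furstenberg estimate Theorem~\ref{Thm: main} via point--line duality, following the template of \cite[Section~6.1]{orponen2023projections}. The case $u=1$ is the Peres--Schlag bound and the case $u=\dim_H K$ is Kaufman's bound, so I may assume $u < \min(1,\dim_H K)$. Suppose for contradiction that $\dim_H \Theta > \max(0,\,2u-\dim_H K)$, where $\Theta=\{\theta \in S^1 : \dim_H \pi_\theta(K)<u\}$, and fix parameters $s<\dim_H K$ and $t<\dim_H \Theta$ with $u<s$ and $t>2u-s$ (equivalently, $u<(s+t)/2$).

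Working at a small scale $\delta>0$, by standard Frostman measure techniques I would extract a $(\delta,s)$-set $P \subset K$ with $\#P \gtrsim \delta^{-s+\eta}$ and a $(\delta,t)$-set $\Theta_\delta \subset \Theta$ with $\#\Theta_\delta \gtrsim \delta^{-t+\eta}$, for any prescribed small $\eta>0$. For each $\theta\in\Theta_\delta$, $\pi_\theta(P) \subset \pi_\theta(K)$ is covered by $\lesssim \delta^{-u-\eta}$ intervals of length $\delta$, and their $\pi_\theta$-preimages give $\lesssim \delta^{-u-\eta}$ parallel $\delta$-tubes in direction $\theta^\perp$. Collecting these tubes over $\theta \in \Theta_\delta$ produces a family $\mathcal{T}$ of $\delta$-tubes that, after a standard Frostman refinement, forms a $(\delta,\,t+u)$-set in $\mathcal{A}(2,1)$, since it is essentially a product of a $(\delta,t)$-set of directions with a $(\delta,u)$-set of offsets.

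Next, pigeonhole to find rich tubes: for each $\theta \in \Theta_\delta$, the $\gtrsim \delta^{-s+\eta}$ points of $P$ distribute among the $\lesssim \delta^{-u-\eta}$ tubes in direction $\theta^\perp$, so on average each tube meets $P$ in $\gtrsim \delta^{-(s-u)+O(\eta)}$ points. Refining $\mathcal{T}$ to the sub-collection $\mathcal{T}'$ of tubes attaining this bound, we retain a $(\delta,t+u)$-set of tubes, each carrying a $(\delta,s-u)$-set of points of $P$. This is a discretised $(s-u,\,t+u)$-Furstenberg configuration lying inside $K$, and Theorem~\ref{Thm: main} then yields
\[
\#P \gtrsim \delta^{-\min\{\,s+t,\ (3s-2u+t)/2,\ s-u+1\,\}+O(\eta)}.
\]
On the other hand $\#P \le C\delta^{-s}$ since $P$ is a $(\delta,s)$-set, so $\min\{s+t,(3s-2u+t)/2,s-u+1\} \le s + O(\eta)$. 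Under the assumptions $t>0$, $u<1$, and $u<(s+t)/2$, each of the three terms strictly exceeds $s$ by a fixed positive gap, giving a contradiction once $\eta$ is chosen small enough.

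The principal obstacle is the Frostman bookkeeping of the first two steps: ensuring that the final family of tubes is genuinely a $(\delta,t+u)$-set in $\mathcal{A}(2,1)$, rather than merely of the correct cardinality, and that after pigeonholing each retained tube carries a bona fide $(\delta,s-u)$-set of points of $P$ (as opposed to the right total count of incidences). These arrangements are standard and are precisely those carried out in \cite[Section~6.1]{orponen2023projections} in the AD-regular setting; the only change is that our input is the general discretised Furstenberg estimate Theorem~\ref{Thm: main} rather than its AD-regular analogue, and correspondingly no AD-regularity hypothesis is needed on $K$ or $\Theta$.
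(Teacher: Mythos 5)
The paper does not actually give a proof of Theorem~\ref{thm:projections}: it cites \cite[Section 6.1]{orponen2023projections} and says the deduction is analogous, so you are reconstructing an omitted argument. Your high-level plan---discretize via Frostman, build a tube family out of the small projections, and invoke Theorem~\ref{Thm: main}---is the right one, and the numerology at the end is correct. However, the particular way you set up the configuration introduces a genuine gap.

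The issue is your claim that, after pigeonholing, each retained tube carries a $(\delta, s-u)$-set of points of $P$. That condition does not follow from $P$ being a $(\delta,s)$-Frostman set together with $|P\cap T|\gtrsim \delta^{-(s-u)}$. For example, all $\delta^{-(s-u)}$ points of $P$ on $T$ could sit inside a single ball of radius $r_0=\delta^{1-(s-u)/s}$; this is consistent with the Katz--Tao bound $|P\cap B(x,r)|\lesssim (r/\delta)^s$ but violates the $(\delta,s-u)$-set condition at scale $r_0$ by a factor $r_0^{-(s-u)^2/s}$. Passing to a $(\delta,s-u)$-subset there destroys the cardinality you need, so ``Frostman refinement'' cannot repair it. Likewise, the spacing claim on the offsets needed to make $\mathcal T$ a $(\delta,t+u)$-set of lines in $\mathcal A(2,1)$ is not automatic and would require its own argument.

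Both of these extra conditions appear only because you chose to dualize and feed the configuration into Theorem~\ref{Thm: main} with the tube family in the role of $\cP$ and the points of $P$ on each tube in the role of $\cT(\bp)$. The argument is cleaner, and matches the formulation of Theorem~\ref{Thm: main}, if you do not dualize: take $\cP:=P$, which is already a $(\delta,s,\delta^{-\eta})$-set, and for each $p\in P$ let $\cT(p)$ consist of the single $\delta$-tube in direction $\theta^\perp$ through $p$ for each $\theta\in\Theta_\delta$. Then $\cT(p)$ is a $(\delta,t,\delta^{-\eta})$-set with $M\sim\delta^{-t}$, because it inherits the Frostman spacing directly from $\Theta_\delta$; no spacing condition on $\mathcal T=\bigcup_p\cT(p)$ is required. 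Theorem~\ref{Thm: main} applied with $(t_0,s_0)=(s,t)$ yields $|\mathcal T|\gtrsim_\eps \delta^{-\min\{s,\,(s+t)/2,\,1\}-t+\eps}$, while the hypothesis on projections gives the upper bound $|\mathcal T|\lesssim \delta^{-(t+u)-O(\eta)}$, so $u\ge \min\{s,(s+t)/2,1\}-O(\eps+\eta)$. Since you chose $u<s$, $u<(s+t)/2$, and $u<1$, this is a contradiction once $\eps,\eta$ are small, as in your proposal. This is the version that needs only the bookkeeping already present in the cited Section 6.1 of Orponen--Shmerkin.
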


\subsection{Discretized sum-product}\label{section: sumproduct}

Let  $\mathbb{F}$ be a field and $A, B\subset \mathbb{F}$, the sum set $A+B$ is defined as 
\[
A+B:=\{ a+b: a\in A, b\in B\}
\]
and the  product set is defined as 
\[
A\cdot B:=\{ a\cdot b: a\in A, b\in B\}.
\]
The sum-product problem asks whether the size of  $A+A$ or $A\cdot A$  can be substantially larger than the size of $A$.  When $A$ is a finite set, we measure the size of $A$ by its cardinality and it was conjectured by Erd\"{o}s and Szemer\'{e}di that when $\mathbb{F}=\mathbb{R}$, for any $\epsilon>0$, there exists $C_{\epsilon}>0$ such that 
\[
\max\{ |A+A|, |A\cdot A|\}\geq C_{\epsilon} |A|^{2-\epsilon}. 
\]
 See \cite{solymosi2009bounding, mohammadi2023attaining,roche2023better,roche2016new,rudnev2020stronger,rudnev2022update} for some recent results on the Erd\"{o}s-Szemer\'{e}di sum-product conjecture. 

 When $\mathbb{F}=\mathbb{R}$, a ``fractal'' analogue of the discrete sum-product problem concerns the $(\delta, s, C)$-sets, which are discretizations of  sets of Hausdorff dimension $\geq s$ in $\mathbb{R}$. For a bounded set $A \subset \R^d$, let $|A|_\delta$ denote the dyadic $\delta$-covering number of $A$. This concept will be precisely defined in Definition \ref{defn:covering number}, but for now one can regard $|A|_\delta$ as being the same (up to a constant factor) as the least number of $\delta$-balls needed to cover $A$.

 \begin{definition}[$(\delta, s, C)$-set]\label{defn:delta, s-set}
     For $\delta\in 2^{-\mathbb{N}}$, $s\in [0,d]$ and $C>0$, a non-empty bounded set $A\subset \mathbb{R}^d$ is called a $(\delta, s, C)$-set  if 
     \[
     |A\cap B(x,r)|_{\delta} \leq C r^{s} |A|_{\delta}, \quad \forall x\in \mathbb{R}^d, r\in [\delta, 1].
     \]
     \end{definition}
If $A$ is a $(\delta, s, C)$-set, then $|A|_{\delta}\geq C^{-1} \delta^{-s}$ and if $A'\subset A$ satisfies $|A'|_{\delta} \geq C'^{-1} |A|_{\delta}$, then $A'$ is a $(\delta, s, CC')$-set. 

Definition~\ref{defn:delta, s-set} is to be contrasted with the notion of a $(\delta, s, C)$-Katz-Tao set, which imposes the bound
\[
     |A\cap B(x,r)|_{\delta} \leq C \left( \frac{r}{\delta} \right)^{s}, \quad \forall x\in \mathbb{R}^d, r\in [\delta, 1].
     \]

     \begin{definition}[AD-regular set]\label{def: AD}
      A $(\delta,s, C)$-set  $A$ is called  Alfors-David regular  at scale $\delta$  if  $C>0$ is a constant independent of $\delta$ and 
      \[
     C^{-1} r^{s} |A|_{\delta}\leq |A\cap B(x,r)|_{\delta} \leq C r^{s} |A|_{\delta}, \quad \forall x\in A, r\in [\delta, 1].
     \]
  We say that $A$ is $(s, \epsilon)$-almost AD-regular if $C=\delta^{-\epsilon}$. Note that in the above condition, we have $x \in A$, not $x \in \R^d$!
 \end{definition}

%Usually $\eps>0$ goes to zero as $\delta$ goes to zero. {\color{red} what do you mean by this?} {\color{purple} This was a comment I added when I used almost AD-regular instead of $\eps$-almost AD-regular. But I should have removed this sentence with the current definition.}
 
Katz and Tao \cite{katz2001some} made the following discretized ring conjecture and showed that it is equivalent to an $\epsilon$-improvement of the $(1/2, 1)$-Furstenberg set problem and Falconer distance set problem for sets of Hausdorff dimension $1$ on the plane.    When $\eps>0$ is sufficiently small, if $A\subset [1,2]$ is a $(\delta, s, \delta^{-\eps})$-set with $s=1/2$, then 
\[
\max\{|A+A|_{\delta}, |A\cdot A|_{\delta}\} \geq C_{s, \eps} \delta^{-s-\eps}. 
\]
This conjecture was proved in 2003 by Bourgain  \cite{bourgain2003erdHos}. (In related work, Edgar and Miller \cite{edgar2003borel} proved the qualitative statement that there do not exist $s$-dimensional Borel subrings of $\R$ for any $s\in (0,1)$).  Later Bourgain generalized the conjecture to the aforementioned projection estimate \eqref{eq: Bourgainproj}. More recently, Guth-Katz-Zahl \cite{guth2021discretized} used an elementary method inspired by Gaarev's approach to the sum-product problem in finite fields to prove a quantitative result, replacing $\epsilon$ by an explicit constant depending on $s$.

As a corollary of Theorem~\ref{thm:projections}, we prove  a new estimate for the discretized sum-product problem, which complements the analogous results of Elekes \cite{elekes1997number} for $\R$ and Mohammadi-Stevens \cite{mohammadi2023attaining} for finite fields $\F_p$. This deduction is analogous to the procedure used to establish \cite[Theorem 1.26]{orponen2023projections} (see \cite[Section 5.2]{orponen2023projections}), so we omit the details.

\begin{theorem}\label{thm:sum-product}
    Given $s \in (0, \frac{2}{3})$ and $\eta > 0$, there is $\eps = \eps(s, \eta) > 0$ such that the following holds for small enough $\delta > 0$. Let $A \subset [1, 2]$ be a $(\delta, s, \delta^{-\eps})$-set. Then
    \begin{equation*}
        \max\{ |A + A|_\delta, |A \cdot A|_\delta \} \ge \delta^{-(5/4)s + \eta}.
    \end{equation*}
\end{theorem}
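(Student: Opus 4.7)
The plan is to deduce Theorem~\ref{thm:sum-product} from (the discretized version of) Theorem~\ref{thm: Furstenberg} via a $\delta$-discretized Elekes-type incidence argument, following the template of \cite[Section 5.2]{orponen2023projections}.

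\textbf{Setup.} Write $N := \max\{|A+A|_\delta, |A\cdot A|_\delta\}$, and define
\[
P := (A+A) \times (A\cdot A) \subset [0, 10]^2, \qquad |P|_\delta \le N^2,
\]
together with the line family
\[
\calL := \{\ell_{a, b} : (a, b) \in A \times A\}, \quad \ell_{a, b} := \{(x, y) \in \R^2 : y = b(x-a)\}.
\]
The key algebraic identity is that $\ell_{a, b}$ passes through $(a+c, bc) \in P$ for every $c \in A$.

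\textbf{Non-concentration.} Since $A \subset [1, 2]$ is a $(\delta, s, \delta^{-\eps})$-set, the product $A \times A$ is a $(\delta, 2s, \delta^{-O(\eps)})$-set in $\R^2$. The parametrization $(a, b) \mapsto \ell_{a,b}$ is bi-Lipschitz on $[1, 2]^2$ with respect to the natural metric on $\calA(2, 1)$ (the Lipschitz constants depend only on a uniform lower bound for $|b|$, which is $1$ here), so $\calL$ is a $(\delta, 2s, \delta^{-O(\eps)})$-set of lines. Similarly, for each fixed $(a, b) \in A \times A$, the map $c \mapsto (a + c, bc)$ is a bi-Lipschitz embedding $A \hookrightarrow \ell_{a,b}$ (again thanks to $|b| \ge 1$), so its image is a $(\delta, s, \delta^{-O(\eps)})$-subset of $P \cap \ell_{a,b}^{(\delta)}$ of cardinality $\gtrsim \delta^{-s+\eps}$. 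Hence $P$ carries the structure of a discretized $(s, 2s)$-Furstenberg set with line family $\calL$.

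\textbf{Applying Furstenberg.} For $s \in (0, 2/3)$ the exponent from Theorem~\ref{thm: Furstenberg} with $t = 2s$ evaluates to
\[
\min\bigl\{3s,\ \tfrac{3s + 2s}{2},\ s+1\bigr\} = \tfrac{5s}{2}.
\]
A standard dyadic pigeonholing argument upgrades this to its $\delta$-discretized form -- precisely the kind of reduction implicit in the deduction of Theorem~\ref{thm:projections} -- yielding
\[
|P|_\delta \ge \delta^{-5s/2 + \eta'}
\]
for every $\eta' > 0$, provided $\eps = \eps(s, \eta') > 0$ is chosen small enough. Combined with $|P|_\delta \le N^2$ this gives $N \ge \delta^{-5s/4 + \eta'/2}$, and the theorem follows upon setting $\eta' := 2\eta$.

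\textbf{Main obstacle.} All the real work is absorbed into the $\delta$-discretized $(s, 2s)$-Furstenberg estimate with the correct quantitative dependence between $\eps$ and $\eta'$, which here is cited as a black box. The remaining non-concentration bookkeeping for $\calL$ and for the point sets on each line is elementary once one observes that the uniform bound $b \in [1, 2]$ makes every relevant parametrization bi-Lipschitz; as the authors note, this is essentially verbatim identical to the procedure carried out in \cite[Section 5.2]{orponen2023projections}.
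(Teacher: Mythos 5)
Your Elekes-type deduction is mathematically sound and arrives at the correct exponent: the configuration $\big(\calL, (A+A)\times(A\cdot A)\big)$ with $\calL$ parametrized by $A\times A$ (so a $(\delta,2s)$-family of lines after the bi-Lipschitz duality $(a,b)\mapsto(b,-ab)$) and $\approx\delta^{-s}$ points $(a+c,bc)$ on each line gives, after applying the discretized Furstenberg estimate with $t=2s$, $|P|_\delta\gtrsim\delta^{-5s/2+\eps}$ for $s<2/3$, hence $N\gtrsim\delta^{-5s/4+\eps/2}$. The paper's own (omitted) argument is stated as a deduction from Theorem~\ref{thm:projections} following \cite[Section 5.2]{orponen2023projections}, rather than a direct Elekes argument from the Furstenberg estimate, but those two routes are essentially dual formulations of the same incidence argument, so you have reconstructed the intended content.

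One point in your write-up is stated in the wrong direction. You cite Theorem~\ref{thm: Furstenberg} (the Hausdorff-dimension statement) for the exponent $\min\{3s,\tfrac{5s}{2},s+1\}=\tfrac{5s}{2}$ and then say ``a standard dyadic pigeonholing argument upgrades this to its $\delta$-discretized form.'' This is not an available move: the Hausdorff-dimension theorem does \emph{not} imply the quantitative $\delta$-discretized bound; the implication runs the other way (the discretized statement, Theorem~\ref{Thm: main}, is what yields Theorem~\ref{thm: Furstenberg}). What you should do instead is invoke Theorem~\ref{Thm: main} directly (after point-line duality and a routine uniformization of $\calL$ and $\cT(\ell)$ into a $(\delta,s,\delta^{-\eta},M)$-nice configuration with $\cP$ a $(\delta,2s,\delta^{-\eta})$-set and $M\approx\delta^{-s}$), which gives $|\cT|_\delta\gtrsim\delta^{-\min\{2s,\,3s/2,\,1\}+\eps}M=\delta^{-5s/2+\eps}$ for $s<2/3$, exactly what you want. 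Since you clearly know that Theorem~\ref{Thm: main} is the actual engine (you reference the reduction used for Theorem~\ref{thm:projections}), this is an expository misstatement rather than a genuine gap, but it should be corrected: the phrase ``upgrades'' should be replaced by a direct citation of the discretized estimate, together with the (routine, but non-trivial) check that the Elekes configuration can be pigeonholed into a nice configuration satisfying its hypotheses.
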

Theorem~\ref{thm:sum-product} improves the very recent result of M\'{a}th\'{e}-O'Regan \cite[Theorem B]{mathe2023discretised} and Orponen-Shmerkin \cite[Theorem 1.26]{orponen2023projections} where $5/4$ was replaced by $7/6$ and extends \cite[Theorem 1.26]{orponen2023projections} for almost AD-regular sets to general $(\delta, s, C)$-sets. 

\begin{remark}
    If $s \ge \frac{2}{3}$, then \cite[Corollary 1.7]{fu2021incidence} gives the lower bound $\delta^{-\frac{s+1}{2} + \eta}$. This bound is sharp, see \cite[Remark 1.13]{mathe2023discretised}.
\end{remark}

While the sum-product theorem can be viewed as a special case of the Furstenberg sets problem, 
the asymmetric sum product, i.e. $A+B\cdot C$ for different sets $A, B, C\subset [0,1]$, plays an important role in Orponen and Shmerkin's recent work \cite{orponen2023projections} on the Furstenberg set conjecture.  

%Theorem \ref{thm:projections} fully resolves a conjecture of Oberlin \cite[(1.8)]{oberlin2012restricted}, while Theorem \ref{thm:sum-product} establishes the $\frac{5}{4}$ threshold for the discretized problem, which complements the analogous results of Elkies \cite{elekes1997number} for $\R$ and Mohammadi-Stevens \cite{mohammadi2023attaining} for finite fields $\F_p$.

\subsection{Roadmap}\label{section: roadmap}
Our proof builds on previous works on projection problems and incidences for tubes. We illustrate the relation between different results below. 
\begin{itemize}[,]
    \item Bourgain's discretized sum-product theorem \cite{bourgain2003erdHos}/Bourgain's projection theorem \cite{bourgain2010discretized}
    \item $\Longrightarrow$ $\epsilon$-improvement of Furstenberg sets estimate \cite{orponen2021hausdorff}
    \item $\Longrightarrow$ Sharp radial projection theorem in the plane \cite{orponen2022kaufman} 
    \item $\Longrightarrow$ $ABC$-sum product theorem \cite{orponen2023projections}, \cite{eberhard2023ABC}
    \item $\Longrightarrow$ Sharp exceptional sets estimate for almost AD-regular sets \cite{orponen2023projections}
    \item $\Longrightarrow$ Sharp Furstenberg sets estimate for almost AD-regular sets \cite{orponen2023projections}
    \item + Sharp Furstenberg sets estimate for well-spaced sets \cite{guth2019incidence}
    \item $\Longrightarrow$ Sharp Furstenberg sets estimate.
\end{itemize}

%It is worth noting that the last arrow, our main result in this paper,  is not obvious in general. For instance, the optimal results for Falconer distance set problem in $\mathbb{R}^2$ for both almost AD-regular sets \cite{orponen2017distance, shmerkin2021distance}  and well-spaced sets  \cite{guth2019incidence} are known, same as for the Kakeya conjecture in $\mathbb{R}^3$ \cite{wang2022sticky, guth2019incidence}. But both conjectures still  seem very challenging.

% Bourgain's discretized sum-product.  Confirm a conjecture of Katz-Tao. 

% Bourgain's projection theorem.  Generalization of discretized sum-product. Used in Random walk on Torus and later in Katz-Zahl's work on the Hausdorff dimension of Kakeya set in $\mathbb{R}^3$. 

% Orponen-Shmerkin breakthrough $\epsilon$-improvement on Furstenberg set problem.   

% Using O-S 21, Orponen-Shmerkin-Wang sharp radial projection, with applications in sticky Kakeya, Heilbronn triangle problem. 

% Using OSW, Orponen-Shmerkin 23 proved the asymmetric version of Bourgain's discretized sum-product theorem, i.e. the ABC sum-product with numerous consequences. 

% Our proof uses Orponen-Shmerkin's proof of Furstenberg sets conjecture for AD-regular sets as a black box (we also provided a short alternate  proof of this results from OS projection for AD-regular sets as a corollary of our pigeonholing proposition). 

%\subsection{Outline} {\color{red} merge this section with previous section?}
In \cite{orponen2023projections}, the Furstenberg conjecture is solved for almost AD-regular sets. On the other hand, in \cite{guth2019incidence}, the Furstenberg conjecture is solved for well-spaced sets (see also \cite{fu2022incidence} for a related result). In this paper, we will generalize the main result in \cite{guth2019incidence} and solve the Furstenberg conjecture for a class of semi-well spaced sets. Then we combine this result with the almost AD-regular result in an induction-on-scales scheme due to \cite{orponen2021hausdorff} to finish the proof of  Furstenberg sets conjecture.

It is interesting to note that the proof of  Furstenberg sets conjecture for almost AD-regular sets completely relies on tools from geometric measure theory and additive combinatorics, while our result for semi-well spaced sets completely relies on Fourier analysis, specifically the high-low method pioneered by
%Orponen \cite{orponen2018dimension}, 
Vinh \cite{vinh2011szemeredi} and Guth, Solomon, and the second author \cite{guth2019incidence}.

\subsection{Structure of the paper}
In Section 2, we introduce preliminaries and notations about dyadic tubes, dimensions of sets, and the high-low method. In Section 3, we state and prove some pigeonholing and multiscale lemmas that will be crucial later in the proof. In Section 4, we re-prove the special case $s + t = 2$ of the Furstenberg set problem, as well as introduce the key Lemma \ref{lem:r-rich} that will allow us to solve the Furstenberg set problem for a class of semi-well spaced sets. In Section 5, we prove Lemma \ref{lem:r-rich} using a high-low method. Finally, in Section 6, we prove a decomposition lemma for Lipschitz functions that reduces the full Furstenberg set conjecture to the AD-regular and semi-well spaced cases.
%we slightly refine the AD-regular Furstenberg set result in \cite{orponen2023projections}, and we combine all the ingredients together to resolve the full Furstenberg set conjecture.

\subsection{Acknowledgments} The first author is supported by an NSF GRFP fellowship. The second author is supported by NSF CAREER DMS-2238818 and NSF DMS-2055544. We would like to thank Larry Guth, Pablo Shmerkin, and Tuomas Orponen for helpful discussions and  for careful reading of an earlier draft.  Finally, the first author would like to thank Yuqiu Fu for introducing him to the Furstenberg set problem.

\section{preliminaries}

In this section, we recall some definitions and lemmas from \cite{orponen2023projections} and the high-low method.

The letter $C$ will usually denote a constant, while the letter $K$ will usually denote a parameter depending on $\delta$ (e.g. $K = C \delta^{-\eps}$).

We say $A \lesim B$ if $A \le CB$ for some constant $C$, and define $A \gesim B$, $A \sim B$ similarly. We write $A \lesim_n B$ to emphasize the implicit constant may depend on $n$.

For a finite set $A$, $|A|$ denotes cardinality of $A$. For a measurable and infinite set $A$ (usually a ball or a rectangle), $|A|$ denotes the Lebesgue measure of $A$.
 
For any compact set $A\in \mathbb{R}^d$ and $r > 0$, let $A^{(r)}:= \{ x \in \R^2 : d(x, A) \le r \}$ denote the $r$-neighborhood of $A$. 
%An ordinary $\delta$-tube is the $\delta$-neighborhood of a line in $\R^2$ (so roughly a $2\delta \times 1$ rectangle). An ordinary $\delta$-ball is the $\delta$-neighborhood of a point in $\R^2$. The use of ``ordinary'' is in contrast to the ``dyadic'' cubes and tubes to be defined below.
%If $\mathcal{J}$ is a set of intervals, let $\cup \mathcal{J}$ denote the union of all intervals in $\mathcal{J}$.

\begin{definition}[Dyadic cubes]
    If $n\in \mathbb{Z}$ and $\delta=2^{-n}$, denote by $\cD_{\delta}(\mathbb{R}^d) := \{ [a\delta, (a+1)\delta) \times [b\delta, (b+1)\delta) : a, b \in \Z \}$ the family of dyadic cubes  in $\mathbb{R}^d$ of side-length $\delta$. If $\cP\subset \mathbb{R}^d$, denote 
    \[
    \cD_{\delta}(\cP):=\{ Q\in \cD_{\delta} (\R^d): Q\cap \cP\neq \emptyset\}.
    \]
    We abbreviate $\cD_{\delta}:=\cD_{\delta}([0,1)^d)$ for $n\geq 0$.  If $\delta <\Delta\in 2^{-\mathbb{N}}$ and $p\in \cD_{\delta}$, let $p^{\Delta}$ denote the dyadic cube $\bp \in \cD_{\Delta}$ containing $p$. 
\end{definition}

\begin{definition}\label{defn:covering number}
    For a bounded set $P \subset \R^d$ and $\delta = 2^{-n}$, $n \ge 0$, define the dyadic $\delta$-covering number
    \begin{equation*}
        |P|_\delta := |\cD_\delta (P)|.
    \end{equation*}
\end{definition}
As remarked before, $|P|_{\delta}$ is comparable (up to a universal constant factor) to the least number of $\delta$-balls needed to cover $P$. (This latter definition is the more common definition of covering number.) If $\cP\subset \cD_{\delta}$ is a set of dyadic cubes, we say that $\cP$ is a $(\delta, s, C)$-set if the union  of $\delta$-cubes in $\cP$ is a $(\delta, s, C)$-set in the sense of Definition \ref{defn:delta, s-set} and we use $|\cP|$ to denote the number of dyadic cubes in $\cP$ and $|\cP|_{\Delta}:=|\cD_{\Delta}(P)|$ (hence $|\cP|_{\delta}=|\cP|$).

\begin{definition}[Dyadic $\delta$-tubes]
    Let $\delta\in 2^{-\mathbb{N}}$. A \emph{dyadic $\delta$-tube} is a set of the form $T=\cup_{x\in p} \mathbf{D}(x)$, where $p\in \cD_{\delta}([-1,1]\times \mathbb{R}),$ and $\mathbf{D}$ is the \emph{point-line duality map}
\[
\mathbf{D}(a,b):=\{ (x,y)\in \mathbb{R}^2: y=ax+b\}\subset \mathbb{R}^2
\]
sending the point $(a,b)\in \mathbb{R}^2$ to a corresponding line in $\mathbb{R}^2$. Abusing the notation, we abbreviate $\mathbf{D}(p):=\cup_{x\in p} \mathbf{D}(x).$ The collection of all dyadic $\delta$-tubes is denoted 
\[
\cT^{\delta}:=\{ \mathbf{D}(p): p\in \mathcal{D}_{\delta}([-1,1]\times \mathbb{R})\}.
\]
A finite collection of dyadic $\delta$-tubes $\{\mathbf{D}(p)\}_{p\in \cP}$ is called a $(\delta, s, C)$-set if $\cP$ is a $(\delta, s, C)$-set. 

If $\delta<\Delta\in 2^{-\mathbb{N}}$ and $T\in \cT^{\delta}$, let $T^{\Delta}$ denote the dyadic tube $\bT\in \cT^{\Delta}$ containing $T$. 
\end{definition}
For a set of dyadic $\delta$-tubes $\cT$ and $\delta< \Delta\in 2^{-\mathbb{N}}$,  define the dyadic $\Delta$-covering number $|\cT|_{\Delta}:=|\cD_{\Delta}(\cT)|$ where $\cD_{\Delta}(\cT):=\{ \bT\in \cT^{\Delta}: \exists T\in \cT, \bT=T^{\Delta}\}$. When $\delta=\Delta$, sometimes we omit the subscript $\delta$ from $|T|_{\delta}$.

\begin{definition}[Slope set]
    The slope of a line $\ell=\mathbf{D}(a,b)$ is defined to be the number $a\in \mathbb{R}$, write $\dir(\ell):=a.$ 

    If $T=\mathbf{D}(p)$ is a dyadic $\delta$-tube, we define the slope $\dir(T) = \cup_{x\in p} \dir ( \mathbf{D}(x))$. Thus $\dir{T}$ is an interval of length $\sim \delta$. If $\mathcal{T}$ is a collection of dyadic $\delta$-tubes, we write $\dir(\cT):=\{\dir(T): T\in \cT\}$. If $\cT(p)$ is a collection of dyadic $\delta$-tubes through a fixed point $p$, we identify $\cT(p)$ with $\dir(\cT(p))$. 
\end{definition}

If $0<\delta <\Delta \leq 1$,  $\cT\subset \cT^{\delta}$ and $\bT\in \cT^{\Delta}$, write $\cT\cap \bT:=\{ T\in \cT: T\subset \bT\}$.

\subsection{Thickening and high-low method}
%For an ordinary $\delta$-tube $T$, let $T^\Delta$ be the $\Delta$-tube coaxial with $T$. This is not to be confused with the convention that for a dyadic $\delta$-tube $T$, $T^\Delta \in \cT^\Delta$ is the $\Delta$-dyadic ancestor of $T$. %Unfortunately, $T^\Delta$ is not exactly a thickening, since it's possible that $T$ and $T^\Delta$ share the same border. To rectify this issue, for a constant $C > 0$ and dyadic tube $T = [a-\delta/2, a+\delta/2] \times [b-\delta/2, b+\delta/2]$, we let $4T := [a - 2\delta, a + 2\delta] \times [b - 2\delta, b + 2\delta]$. Note that if $T$ is a dyadic $\delta$-tube, then $4T^\Delta$ is not necessarily a dyadic $4\Delta$-tube (or even contained in a dyadic tube). However, the property that we will care about is the following:

\begin{definition}[Incidences]\label{def: incidence}
   % Let $K \ge 1$.
   Let $\delta \leq \Delta\in 2^{-\mathbb{N}}$.  For a set of distinct weighted dyadic $\delta$-cubes $\cP$ with weight function $w : \cP \to \R^{\ge 0}$ and a set of (not necessarily distinct) dyadic $\Delta$-tubes $\T$, define the incidence count \[I_w (\cP, \T) := \sum_{p \in \cP} \sum_{T \in \T, p^{\Delta} \subset 6T} w(p).\]
For each dyadic $\Delta$-tube,  $6T:= T^{(6\Delta)}$, the $6\Delta$-neighborhood of $T$. 
    We usually assume $w \equiv 1$ or $w$ is explicitly defined beforehand, in which case we drop it from the notation.
\end{definition}
The following proposition is a slightly
 weaker version of \cite[Proposition 5.2.1]{bradshaw2022additive}. 
 %(with choice of parameters $d = 2, \alpha = \eps^2, S = \delta^{-\eps}$) 
% and holds for any set of essentially distinct $\delta$-tubes. 
We include a  proof inspired by \cite{CPZ} here for completeness. 

\begin{prop}\label{prop:high low}
    Fix $ \delta\in 2^{-\mathbb{N}}$, let $\cP$ be a set of distinct weighted dyadic  $\delta$-cubes contained in $[0,1)^2$ with weight function  $w : \cP \to \R^{\ge 0}$ and $\cT$ be a set of distinct dyadic $\delta$-tubes. If $\delta^{-\eps/100} \le  S \le \delta^{-1}$, then
    \begin{equation*}
        I_w(\cP, \cT) \lesim_{\eps}  \left( S \delta^{-1} |\T| \sum_{p \in \cP} w(p)^2 \right)^{1/2} +
  S^{-1+\eps} I_w(\cP, \T^{S\delta}),
    \end{equation*}
    where $\cT^{S\delta} :=\{ T^{S\delta}: T\in \cT\}$ is counted with multiplicity.
\end{prop}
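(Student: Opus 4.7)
The plan is to prove the proposition by a Fourier-analytic high--low decomposition in the spirit of \cite{guth2019incidence} and \cite{bradshaw2022additive}. First I would replace the rough indicators by smooth Schwartz bumps: for each dyadic cube $p$ choose $\chi_p$ adapted to $p$ (so $\chi_p\gesim \mathbf{1}_p$ with $\widehat{\chi_p}$ essentially supported on the dual $\delta^{-1}\times\delta^{-1}$ square), and similarly $\chi_T$ adapted to $6T$ (so $\widehat{\chi_T}$ is essentially supported on the dual $1\times\delta^{-1}$ rectangle perpendicular to $T$). Setting $f := \sum_p w(p)\chi_p$ and $g := \sum_T \chi_T$, the incidence count $I_w(\cP,\cT)$ is comparable to $\delta^{-2}\int fg$.

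Next fix a Schwartz $\phi$ whose Fourier transform equals $1$ on $B(0,(2S\delta)^{-1})$ and is supported on $B(0,(S\delta)^{-1})$, and split $g = g_{\mathrm{low}} + g_{\mathrm{high}}$ with $g_{\mathrm{low}} := g\ast\phi$. Since $\phi$ lives at physical scale $S\delta$, convolution smears each $\delta$-wide tube transversely over scale $S\delta$, producing a bump essentially supported on the thick tube $T^{S\delta}$ with peak height $\sim S^{-1}$. Summing over $T$ and absorbing Schwartz tails into a factor $S^{\eps}$ (permissible since $S\geq \delta^{-\eps/100}$) yields
\begin{equation*}
    \delta^{-2} \int f\cdot g_{\mathrm{low}} \lesim_{\eps} S^{-1+\eps}\, I_w(\cP, \cT^{S\delta}).
\end{equation*}
For the high-frequency piece, Cauchy--Schwarz gives $\delta^{-2}\int f\cdot g_{\mathrm{high}} \le \delta^{-2}\|f\|_2\|g_{\mathrm{high}}\|_2$, and essentially disjoint $\chi_p$'s give $\|f\|_2^2 \lesim \delta^2\sum_p w(p)^2$, so the claimed term reduces to the key $L^2$ bound
\begin{equation*}
    \|g_{\mathrm{high}}\|_2^2 \lesim S\delta\,|\cT|.
\end{equation*}

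To prove this last bound I would group $\cT = \bigsqcup_{\bT\in\cT^{S\delta}}\cT_\bT$ by $S\delta$-thickening and further partition the thickenings into angular sectors of width $S\delta$. In Fourier space, on the high-frequency region $|\xi| \geq (S\delta)^{-1}$ the supports of $\widehat{\chi_T}$ are essentially concentrated in $S\delta$-angular sectors perpendicular to $T$, so only tubes within a common sector contribute at a given $\xi$; by Plancherel it suffices to bound $\sum_\theta \|\sum_{T\in\cT_\theta}\chi_T\|_2^2$ where $\cT_\theta$ runs over tubes in sector $\theta$. Within a fixed thick tube $\bT$ (in direction $\theta$), the $k_\bT := |\cT_\bT|$ thin tubes have slopes in an $S\delta$-interval, so at $\delta$-resolution at most $\sim S$ of them can pass through any given point; writing $g_\bT := \sum_{T\in\cT_\bT}\chi_T$ this yields $\|g_\bT\|_\infty\lesim S$, $\|g_\bT\|_1\lesim \delta k_\bT$, hence $\|g_\bT\|_2^2 \lesim S\delta k_\bT$. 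Parallel distinct thick tubes within a sector have intercepts differing by $\geq S\delta$ and are physically disjoint, so their cross contributions vanish modulo Schwartz tails, and summing the diagonal yields $\sum_\bT \|g_\bT\|_2^2 \lesim S\delta|\cT|$. I expect the main obstacle to be the simultaneous control of the angular and frequency localizations together with the careful handling of the Schwartz tails when the cutoffs in Fourier and physical space do not align exactly.
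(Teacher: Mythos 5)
Your proposal is correct and follows essentially the same approach as the paper's proof (high--low Fourier decomposition of the tube function, with the low piece yielding $S^{-1+\eps} I_w(\cP,\cT^{S\delta})$ after invoking the tube-thickening lemma, and the high piece treated by Cauchy--Schwarz plus the $L^2$ estimate $\|g_{\mathrm{high}}\|_2^2 \lesim S\delta\,|\cT|$). The only substantive difference is how this $L^2$ estimate is organized: the paper splits $\cT$ into $\delta$-separated directions and uses that any frequency $\xi$ outside $B(0,S^{-1}\delta^{-1})$ lies in the dual rectangle $T^*$ for at most $\lesim S$ of these directions, producing an $S$-fold near-orthogonality factor; you instead split into $S\delta$-angular sectors (exactly orthogonal in the high-frequency region), and within each sector bound the contribution of a single $S\delta$-thick tube by $L^1$--$L^\infty$ interpolation with $\|g_\bT\|_\infty\lesim S$. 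These are two equivalent ways of extracting the factor $S$ and give identical bounds. One small imprecision to fix: adjacent $S\delta$-thick tubes in the same $S\delta$-sector are not physically disjoint (their supports can overlap on a strip of width $\sim S\delta$), but they do have $O(1)$ bounded overlap, which is all your argument actually needs for $\|g_\theta\|_2^2\lesim\sum_\bT\|g_\bT\|_2^2$. Your anticipated difficulties (Schwartz tails, misalignment of physical/Fourier cutoffs) are indeed where the $S^{\eps}$-type losses and the negligible $\delta^{10}$ error in the paper's version come from.
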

\begin{proof}
For each $T\in \cT$, let $\phi_T$ be the affine transformation that maps $T$ to  $[-1/2, 1/2]^2$.  Let $\psi$ be a smooth bump function $=1$ on $[-3, 3]^2$ and supported on $[-4, 4]^2$. Define $f_T= \psi\circ \phi_T$, then $f_T=1$ on $6T$ is supported on $8T$.  Then 
\[
I_w(\cP, \cT) \lesssim \delta^{-2}\int fg. 
\]
We are going to analyze $\int fg$ using Fourier transform. 

Since $|\widehat{\psi}(x)|\lesssim_N \frac{1}{(1+|x|^2)^N}$ for any $N >0$, the Fourier transform of $f_T$ is essentially supported on 
\[
T^*:=\{ \xi: |\xi\cdot (x-c_T)|\lesssim_{\eps} S^{\eps/2}, \quad \forall x\in T\} \text{ where } c_T \text{ is the center of } T, 
\]
in the sense that  $|\widehat{f}_T(\xi)|\leq S^{\eps^{-2}}\leq \delta^{1/\eps}$ for any $\xi \notin T^*$. 
Let $\psi_B$ be a smooth bump function $=1$ on $B=B(0, S^{-1+\eps/2}\delta^{-1})$ and supported on $2B$. 
Define the low frequency part of $f_T$ as 
\[
f_T^{l} = (\widehat{f}_T \psi_B)^{\vee}
\] and the high frequency part as $f_T^h= f_T-f_T^{l}$.  Then $|f_T^l|\leq S^{-1+\eps}$ on $N_{S\delta} T$ and $|f_{T}^l(x)|\lesssim_{\eps} \delta^{\eps^{-1}}$ for any $x\notin N_{S\delta}(T)$. In other words, $|f_T^l(x)|\lesssim_{\eps} S^{-1+\eps} \chi_{N_{S\delta}T} + \delta^{\eps^{-1}}$.  Define $f^l=\sum_{T\in \cT} f_T^l$ and $f^h=\sum_{T\in \cT} f_T^h$ and split the integral as 
\begin{equation}\label{eq: high-low}
\int fg \leq |\int f^h g|+|\int f^l g|.
\end{equation}

If the low frequency part dominates: $\int fg \lesssim |\int f^l g|$, then by Lemma~\ref{lem:thicken dyadic tube}, 
\begin{equation}\label{eq: low}
I_{w}(\cP, \cT) \lesssim \delta^{-2} \int fg \lesssim \delta^{-2} |\int f^l g|\lesssim_{\eps} S^{-1+\eps} I_{w}(\cP, \cT^{S\delta}) +\delta^{10}. 
\end{equation}
The last term $\delta^{10}$ is because the total number of distinct $\delta$-cubes and distinct $\delta$-tubes  is $\lesssim \delta^{-2}$, multiplying by $\delta^{\eps^{-1}}$ gives $\lesssim \delta^{10}$. 

It remains to consider when the high frequency part dominates:
\begin{equation} \label{eq: CauchySchwarz}
\int fg \lesssim |\int f^h g| \leq (\int |f^h|^2)^{1/2} (\int |g|^2)^{1/2}.
\end{equation} Since $T^*$ only depends on the direction of $T$, independent of its center $c_T$, we can organize $\cT=\sqcup_{\theta} \cT_{\theta}$ where  the disjoint union is over $\delta$-separated directions $\theta$ and $\cT_{\theta}$ consists of tubes in $\cT$ parallel to $\theta$ up to an error of size $\leq \delta$.  Write $f_{\theta}^h=\sum_{T\in \cT_{\theta}} f_T^h$ and  $f^h=\sum_{\theta} f_{\theta}^h$.  Then $\widehat{f_{\theta}^h}$ is essentially supported on $T^*\setminus B$.  Through each $\xi \in B^c$, at most $S$ many $\theta$ such that $\xi\in T^*$ for $T\in \cT_{\theta}$. Then
\begin{align*}
    \int |f^h|^2 &=\int |\widehat{f^h}|^2 \lesssim S\int \sum_{\theta} |\widehat{f_{\theta}^h}|^2\\
    & \lesssim S \sum_{\theta}\int |f_{\theta}|^2 \\
    &\lesssim S\sum_{\theta}\int \sum_{T\in \cT_{\theta}} |f_T|^2 \lesssim S \delta |\cT|. 
\end{align*}
Plugging in \eqref{eq: CauchySchwarz}, we get 
\begin{equation}\label{eq: high}
I_{w}(\cP, \cT) \lesssim \delta^{-2} \int fg \lesssim \delta^{-2} (S\delta |\cT|)^{1/2} (\delta^2 \sum_{p\in \cP} w(p)^2)^{1/2} \lesssim \big( S\delta^{-1} |\cT| \sum_{p\in \cP} w(p)^2\big)^{1/2}.  
\end{equation}
The conclusion follows from combining \eqref{eq: high-low}, \eqref{eq: low} and \eqref{eq: high}.

\end{proof}
\begin{remark} %The original proof of \cite[Proposition 5.2.1]{bradshaw2022additive} uses Fourier analysis, as in \cite{guth2019incidence} where the method was introduced. And w
We often refers to the first term $( S \delta^{-1} |\T| \sum_{p \in \cP} w(p)^2 )^{1/2} $ as high-frequency term and the second term $S^{-1+\eps}I_w(\cP, \cT^{S\delta})$ as low-frequency term because they correspond to high-low frequences of $f$. 
Even $\cT^{S\delta}$ may not be a distinct set of dyadic $S\delta$-tubes, but $I_w(\cP, \cT^{S\delta})$ was defined in Definition~\ref{def: incidence}.  
\end{remark}

%Unfortunately, if we na\"{i}vely try to replace ordinary tubes with dyadic tubes in Proposition \ref{prop:high low}, we run into the issue that the $\Delta$-neighborhood of $T$ may not be contained in the dyadic ancestor $T^\Delta$. Instead, we shall convert dyadic tubes to ordinary tubes via the following procedure, which crucially is ``compatible with thickening''.

\begin{lemma}\label{lem:thicken dyadic tube}
    For each dyadic $\delta$-tube $T$ and $30\delta < \Delta \in 2^{-\mathbb{N}}$,
    \[
(6T)^{(\Delta)}\cap [-1, 2)^2 \subset 6(T^{\Delta}) .
    \]
    Recall $(6T)^{(\Delta)}$ means the $\Delta$-neighborhood of $6T$, i.e. the $\Delta$-neigborhood of $6\delta$-neighborhood of $T$,  and $T^{\Delta}$ is the dyadic $\Delta$-tube containing $T$. 
  %  there exists an ordinary $6\delta$-tube $T_\ord$ containing $T$ such that all of the $T_\ord$'s for a given scale $\delta$ are essentially distinct and when $6\delta < \Delta$, we have $(T_\ord)^\Delta \cap [-1, 1]^2 \subset (T^\Delta)_\ord$.
\end{lemma}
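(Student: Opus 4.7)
The plan is to reduce the claim to a triangle inequality after observing that $T\subset T^{\Delta}$ as subsets of $\R^2$. First I would note that if $T=\mathbf{D}(p)$ for a dyadic $\delta$-cube $p\in\cD_{\delta}([-1,1]\times\R)$ and $p^{\Delta}$ is the unique dyadic $\Delta$-cube containing $p$, then
\[
T=\bigcup_{x\in p}\mathbf{D}(x)\;\subset\;\bigcup_{x\in p^{\Delta}}\mathbf{D}(x)=T^{\Delta},
\]
since every line appearing in the union on the left also appears in the union on the right.

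Given this inclusion, the proof is just two applications of the triangle inequality. For $z\in (6T)^{(\Delta)}\cap [-1,2)^2$, by definition there exist $w\in T^{(6\delta)}$ with $|z-w|\le\Delta$ and $w'\in T$ with $|w-w'|\le 6\delta$. Hence $d(z,T)\le 6\delta+\Delta$, and because $T\subset T^{\Delta}$ this upgrades to $d(z,T^{\Delta})\le 6\delta+\Delta$. The hypothesis $\Delta>30\delta$ (any $\Delta\ge \tfrac{6}{5}\delta$ would do) gives $6\delta+\Delta\le 6\Delta$, so $z\in (T^{\Delta})^{(6\Delta)}=6(T^{\Delta})$, as desired.

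I do not expect any real obstacle. The restriction to $[-1,2)^2$ plays no role in the argument; one actually proves the stronger global inclusion $(6T)^{(\Delta)}\subset 6(T^{\Delta})$. The only small point to verify is the set-theoretic inclusion $T\subset T^{\Delta}$, which is immediate from the monotonicity of the point-line duality map $p\mapsto\mathbf{D}(p)$ together with the fact that $p$ is contained in exactly one $\Delta$-cube of $\cD_{\Delta}$.
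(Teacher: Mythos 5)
Your proof is correct, and it is in fact cleaner than the paper's argument. The decisive observation is the set-theoretic inclusion $T=\mathbf{D}(p)=\bigcup_{x\in p}\mathbf{D}(x)\subset\bigcup_{x\in p^{\Delta}}\mathbf{D}(x)=\mathbf{D}(p^{\Delta})=T^{\Delta}$, which holds simply because $p\subset p^{\Delta}$ and unions are monotone. Once one has $T\subset T^{\Delta}$, the claim reduces to a triangle inequality: $d(z,T^{\Delta})\le d(z,T)\le 6\delta+\Delta\le 6\Delta$. The paper instead argues coordinatewise via an approximating line $\ell_{a,b}$ (the line through a corner of $p$), controls the vertical deviation $|y-(ax+b)|$, and then compares to the corresponding corner line of $p^{\Delta}$; this computation genuinely uses the hypothesis $(x,y)\in[-1,2)^2$ to keep the $|a-a'|\,|x|$ term bounded. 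Your argument does not need that restriction and so proves the stronger global inclusion $(6T)^{(\Delta)}\subset 6(T^{\Delta})$; it also works under the weaker numerical hypothesis $\Delta\ge\tfrac{6}{5}\delta$, whereas the paper's explicit constants are what force the $\Delta>30\delta$ threshold. Both approaches are valid, and the $[-1,2)^2$ restriction is in any case harmless for the lemma's only application in Proposition~\ref{prop:high low}, where all the cubes of $\cP$ lie in $[0,1)^2$.
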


%\begin{remark}
    %Since $\cP \subset [0, 1]^2$ and so all incidences happen in $[0,1]^2$, we are justified in writing the shorthand $(T_\ord)^\Delta \subset (T^\Delta)_\ord$.
%\end{remark}

\begin{proof}
    Let $T = \bD(p)$ where $p = [a, a+\delta) \times [b, b+\delta)$ is a $\delta$-cube and let $\ell_{a,b}$ be the line $y = ax + b$.
    
    %We construct the ordinary $6\delta$-tube $T_\ord = \ell_{a,b}^{(6\delta)}$, where $\ell_{a,b}$ is the line $y = ax + b$.

    %First, we show $T \subset T_\ord$. Indeed, if $(x, y) \in T$, then $y - (ax + b) \in [-\delta, 2\delta]$, so $d((x, y), \ell_{a,b}) \le 2\delta$. Hence, $(x, y) \in T_\ord$.

   % Next, we show $(T_\ord)^\Delta \cap [-1, 1]^2 \subset (T^\Delta)_\ord$.
    
    Let $(x, y) \in (6T)^{(\Delta)} \cap [-1, 2)^2$; then $d((x, y), \ell_{a,b}) \le \Delta+10\delta$. Since $a \in (-1, 1)$ and $\Delta> 30\delta$, this means $|y - (ax + b)| \le 2\Delta$.
    
    Let $p^\Delta = [a', a'+\Delta) \times [b', b'+\Delta)$, where $|a' - a| \le \Delta$ and $|b' - b| \le \Delta$ (this encodes the fact $p \in p^\Delta$). Then since $x \in [-1, 2)$,
    \begin{equation*}
        |y - (a'x + b')| \le |y - (ax + b)| + 2|a - a'| + |b - b'| \le 5\Delta.
    \end{equation*}
    Since $T^\Delta = \cD(p^\Delta)$, we get $(x, y) \in 6(T^\Delta)$, as desired.
\end{proof}

\section{Pigeonholing}
In this section, we pigeonhole a good configuration to have good properties.

\begin{definition}
    Fix $\delta\in 2^{-\mathbb{N}}, s\in [0,1], C>0, M\in \mathbb{N}$. We say that a pair $(\mathcal{P}, \mathcal{T})\subset \mathcal{D}_{\delta}\times \mathcal{T}^{\delta}$ is a $\delta$-configuration if every $p\in \cP$ is associated with a subset $\cT(p)\subset \cT$ such that $T\cap p\neq \emptyset$ for all $T\in \cT(p)$ (note that $\cT(p)$ does not need to be all tubes in $\cT$ passing through $p$).  We say that $(\cP,\cT)$ is a  $(\delta, s, C, M)$-nice configuration if for every $p\in \mathcal{P}$ there exists a $(\delta, s, C)$-set $\mathcal{T}(p)\subset \mathcal{T}$ with $|\mathcal{T}(p)|\sim M$ and such that $T\cap p\neq \emptyset$ for all $T\in \mathcal{T}(p)$. 
\end{definition}
It follows from the definition of $(\delta,s, C)$-set that $M\gtrsim C^{-1} \delta^{-s}$. 

\begin{definition}
    Let $\cP\subset \cD_{\delta}$. We say that $\cP'$ is a refinement of $\cP$ if $\cP'\subset \cP$ and $|\cP'|_{\delta}\approx_{\delta} |\cP|_{\delta}.$ Let $\Delta\in (\delta, 1)$, we say that $\cP'$ is a refinement of $\cP$ at resolution $\Delta$ if there is a refinement $\cP_{\Delta}'$ of $\cD_{\Delta}(\cP)$ and \[ \cP'=\bigcup_{\bp\in \cP_{\Delta}'}(\cP\cap \bp).\] 
    Note that $\cP'$  does not necessarily satisfy $|\cP'|_{\delta}\gtrapprox_{\delta}|\cP|_{\delta}$ unless we also have that $\cP$ is uniform at scale $\Delta$. We can likewise define a refinement of a set of dyadic $\delta$-tubes $\T$ at resolution $\Delta$.
\end{definition}
Here, the notation $\gtrapprox_\delta$ means $\gtrsim_N \log(1/\delta)^{C_N}$ for some number $N$ independent of $\delta$, and likewise for $\lessapprox_\delta, \approx_\delta$.

%{\color{blue} Note the difference between refinement and nice configuration refinement.}
\begin{definition}\label{def: refinement}
    Let $(\cP_0, \cT_0)$ be a $(\delta, s, C_0, M_0)$-nice configuration. We say that a $\delta$-configuration $(\cP, \cT)$ is a refinement of $(\cP_0, \cT_0)$ if $\cP\subset \cP_0$, $|\cP|_{\delta}\approx_{\delta} |\cP_0|_{\delta}$, and for any $p\in \cP,$ there is $\cT(p)\subset \cT_0(p)\cap \cT$ with $\sum_{p\in \cP} |\cT(p)|_{\delta} \gtrapprox_{\delta} |\cP_0|_{\delta}\cdot M_0$. We say that a refinement $(\cP, \cT)$ of $(\cP_0, \cT_0)$  is a nice configuration refinement if in addition for each $p\in \cP$,  $|\cT(p)|\gtrapprox_{\delta} |\cT_0(p)|$.
    
    Let $\Delta\in (\delta, 1)$, we say that $(\cP, \cT)$ is a refinement of $(\cP_0,\cT_0)$ at resolution $\Delta$ if $\cP$ is a refinement of $\cP_0$ at resolution $\Delta$ and for each $p\in \cP$, $\bT\in \cD_{\Delta}(\cT(p))$, $\bT\cap \cT(p)=\bT\cap \cT_0(p)$. 
\end{definition}
Here $\cT$ might be much smaller than $\cT_0$. 
\begin{remark}\label{rmk: nice config refinement}
For any $(\delta, s, C_0, M_0)$-nice configuration $(\cP_0, \cT_0)$ and  any refinement $(\cP, \cT)$ of $(\cP_0, \cT_0)$, there is  a refinement $(\cP', \cT')$  of $(\cP, \cT)$ that is a nice configuration refinement of $(\cP_0, \cT_0)$. 
\end{remark}

%We hope to pigeonhole such that for any $(\delta, s, C, M)$-nice configuration $(\cP, \cT)$, there is a nice configuration refinement with some uniform properties. The  pigeonholing argument below is based on Orponen-Shmerkin \cite{orponen2021hausdorff}.

%for any $\Delta\in (\delta, 1]\cap 2^{\N}$, there is $r_\Delta$, and  for any $T\in \cT$ and  $|T\cap \mathcal{P}_0|_{\delta}\sim r$ and $|T^{\Delta}\cap \mathcal{P}_0|_{\Delta} \approx r_{\Delta}$. 

\begin{definition}
    Let $N\geq 1, $ and let 
   \[
   \delta=\Delta_N<\Delta_{N-1}<\cdots<\Delta_1<\Delta_0=1
   \]
   be a sequence of dyadic scales. We say that a set $\cP\subset [0,1]^2$ is $\{\Delta_j\}_{j=1}^N$-uniform if there is a sequence $\{N_j\}_{j=1}^N$ with $N_j\in 2^{\mathbb{N}}$ and $|\cP\cap Q|_{\Delta_j}\in [ N_j/2, N_j)$ for all $j\in \{1, \dots, N\}$ and all $Q\in \cD_{\Delta_{j-1}}(\cP)$. 
\end{definition}

Recall Lemma 2.15 from \cite{orponen2023projections}.
\begin{lemma}\label{lem: pigeonhole}
    Let $\cP\subset [0,1)^d, N, T\in \mathbb{N}$ and $\delta=2^{-NT}$. Let also $\Delta_j:=2^{-jT}$ for $0\leq j\leq N$. Then, there is a $\{\Delta_j\}_{j=1}^N$-uniform set $\cP'\subset \cP$ such that 
    \[
    |\cP'|_{\delta}\geq (2T)^{-N} |\cP|_{\delta}. 
    \]
    In particular, if $\epsilon>0$ and $T^{-1}\log(2T)\leq \epsilon$, then $|\cP'|_{\delta}\geq \delta^{\epsilon}|\cP|_{\delta}.$
\end{lemma}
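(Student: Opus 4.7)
The plan is to perform iterative dyadic pigeonholing, processing scales from the finest $\Delta_N=\delta$ down to the coarsest $\Delta_1$. Starting from $\cP_N:=\cP$, I will produce a decreasing chain $\cP_N\supset\cP_{N-1}\supset\cdots\supset\cP_0=:\cP'$, where at step $j$ (running $j=N,N-1,\ldots,1$) the passage from $\cP_j$ to $\cP_{j-1}$ pins down the integer $N_j\in 2^{\N}$ required by uniformity at scale $\Delta_j$.

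At step $j$, for each parent cube $Q\in\cD_{\Delta_{j-1}}(\cP_j)$ the count $n_j(Q):=|\cP_j\cap Q|_{\Delta_j}$ is a positive integer, bounded above by the total number of $\Delta_j$-children of $Q$, namely $2^{dT}$. Hence $n_j(Q)$ lies in one of $O(T)$ dyadic intervals $[2^{l-1},2^l)$. Weighting each parent $Q$ by $|\cP_j\cap Q|_\delta$ and pigeonholing over these $O(T)$ classes, I isolate a single class with exponent $l=l_j$, set $N_j:=2^{l_j}$, and define $\cP_{j-1}$ to be the union of $\cP_j\cap Q$ over the parents with $n_j(Q)\in[N_j/2,N_j)$. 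This step costs a factor of at most $O(T)$ in the $\delta$-count, so iterating over all $N$ steps gives $|\cP'|_\delta\gesim (2T)^{-N}|\cP|_\delta$.

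For $\{\Delta_j\}$-uniformity of $\cP'$: by construction, every surviving $Q\in\cD_{\Delta_{j-1}}(\cP_{j-1})$ satisfies $|\cP_{j-1}\cap Q|_{\Delta_j}\in[N_j/2,N_j)$. The crucial observation is that all subsequent refinements at coarser scales $j'<j$ discard only \emph{entire} $\Delta_{j'-1}$-cubes, and each such cube is a disjoint union of $\Delta_{j-1}$-subcubes. So any $Q\in\cD_{\Delta_{j-1}}(\cP')$ retains all of its $\Delta_j$-children that were present in $\cP_{j-1}$, giving $|\cP'\cap Q|_{\Delta_j}=|\cP_{j-1}\cap Q|_{\Delta_j}\in[N_j/2,N_j)$, as desired.

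The main (minor) obstacle is precisely this preservation of level-$j$ counts under later coarser-scale pigeonholing; reversing the scale order (coarse-to-fine) would allow coarse pigeonholing to trim $\Delta_j$-children inside surviving $\Delta_{j-1}$-cubes and destroy the counts. The ``in particular'' statement follows from comparing exponents: $(2T)^{-N}=2^{-N\log_2(2T)}$ and $\delta^\epsilon=2^{-NT\epsilon}$, so the hypothesis $T^{-1}\log(2T)\le\epsilon$ yields $(2T)^{-N}\ge\delta^\epsilon$.
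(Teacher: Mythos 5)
Your proof is correct, and the fine-to-coarse iterative pigeonholing is exactly the standard argument for this uniformization lemma (the paper itself cites \cite[Lemma 2.15]{orponen2023projections} rather than giving a proof). You correctly identify the key point: processing the finest scale first guarantees that each later refinement removes only whole $\Delta_{j'-1}$-cubes with $j'-1<j-1$, hence whole $\Delta_{j-1}$-cubes, so $\cP'\cap Q=\cP_{j-1}\cap Q$ for every surviving $Q\in\cD_{\Delta_{j-1}}(\cP')$ and the count $|\cP'\cap Q|_{\Delta_j}$ inherits the pinned-down value. One small caveat on constants: for $\cP\subset[0,1)^d$ the count $n_j(Q)$ ranges over $[1,2^{dT}]$, which requires $dT+1$ dyadic classes, so for $d\ge 2$ your argument gives $(dT+1)^{-N}$ rather than the stated $(2T)^{-N}$. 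This is immaterial for the ``in particular'' consequence that is actually used (it only changes the hypothesis to $T^{-1}\log(C_d T)\le\eps$ for an absolute $C_d$), but it is worth noticing that the clean $(2T)^{-N}$ bound is really a $d=1$ statement.
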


\section{Main Theorem on Furstenberg sets}

In this section, we will state our main theorem on Furstenberg sets and prove the special case $\cP$ is a $(\delta, 2-s, C)$-set, see Proposition~\ref{prop:s+t=2}. Then, we will prove a key special case (the semi well-spaced case) where $\cD_\Delta (\cP)$ looks like a $(\Delta, 2-s, C)$-set at  a high scale $\Delta$ and $\cP$ looks like a $(\delta, s, C)$-Katz-Tao set at low scales between $\delta$ and $\Delta$, see Proposition~\ref{prop: key}. Finally, we will combine the semi-well-spaced case (Proposition~\ref{prop: key})  and  almost AD-regular case (the recent breakthrough of \cite{orponen2023projections})  using a carefully designed scale decomposition (Lemma~\ref{lem:dictionary}) and  a now-standard induction on scales argument due to \cite{orponen2021hausdorff} to finish off the proof of Theorem~\ref{Thm: main}.

In order to prove the Furstenberg set conjecture, it suffices to prove the following discretized version (see \cite[Section 3.1]{orponen2021hausdorff} or \cite[Lemma 3.3]{hera2021improved}).
\begin{theorem}\label{Thm: main}
   For every $\eps > 0$, there exists $\eta = \eta(\eps,s,t) > 0$ such that the following holds for any $ \delta < \delta_0(s, t, \epsilon)$. Let $(\cP, \cT)$ be a $(\delta, s, \delta^{-\eta}, M)$-nice configuration with $s\in (0, 1]$, $\cP$ is a $(\delta, t, \delta^{-\eta})$-set, $t\in (0, 2]$. Then
   % Suppose  $\cP\subset \mathbb{R}^2$ be a   $(\delta, s, \delta^{-\eta})$-set such that for each $p\in \cP$, there is a $(\delta, t, \delta^{-\eta})$-set $\cT(p)$ containing $p$ and $|\cT(p)|\sim \mathcal{M}$. 
    %Let $\cT=\cup_{p\in \cP} \cT(p)$, then 
    \[
    |\cT|_{\delta} \gtrsim_\eps \delta^{-\min\{t, \frac{s+t}{2},1\}+\eps} M.
    \]
\end{theorem}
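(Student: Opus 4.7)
The plan is to split the target exponent $\min\{t,(s+t)/2,1\}$ into its three regimes and handle each by a different mechanism. The two boundary regimes are comparatively easy. When $t\le s$ the bound $\min=t$ follows essentially from the hypothesis that $\cP$ is a $(\delta,t,\delta^{-\eta})$-set together with the fact that distinct tubes through each $p\in\cP$ are $\delta$-separated in direction. When $s+t\ge 2$ the bound $\min=1$ is exactly Proposition~\ref{prop:s+t=2}; in that regime I would promote $\cP$ to a $(\delta,2-s,\delta^{-O(\eta)})$-set and apply the high-low inequality (Proposition~\ref{prop:high low}) once with $S=\delta^{-\eps/2}$, balancing the high-frequency term against the low-frequency term to produce $|\cT|_\delta\gtrsim\delta^{-1+O(\eps)}M$.

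The crux is the open regime $s<t<2-s$, where I need $|\cT|_\delta\gtrsim\delta^{-(s+t)/2+\eps}M$. Following the roadmap in Section~\ref{section: roadmap}, I would proceed by combining two inputs. The \emph{AD-regular input} is the recent result of Orponen--Shmerkin \cite{orponen2023projections}, which already establishes the sharp Furstenberg bound when $\cP$ is $(s,\eps)$-almost AD-regular. The \emph{semi-well-spaced input} is Proposition~\ref{prop: key}, which I would apply when at some intermediate scale $\Delta$ the coarsening $\cD_\Delta(\cP)$ looks like a $(\Delta,2-s,\cdot)$-set (so that Proposition~\ref{prop:s+t=2} applies at scale $\Delta$) while inside each $\Delta$-parent $\cP$ behaves like a Katz--Tao $(\delta,s,\cdot)$-set (so that each fine cube contributes the expected $(\delta/\Delta)^s$ count). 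Proposition~\ref{prop: key} reduces to the key Lemma~\ref{lem:r-rich}, which in turn is proved by a multiscale high-low Fourier argument in the style of \cite{guth2019incidence} applied to the thickened tubes $\cT^{S\delta}$ produced by Proposition~\ref{prop:high low}.

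To combine these inputs for a general nice configuration, I would first apply the uniformization Lemma~\ref{lem: pigeonhole} to pass to a $\{\Delta_j\}$-uniform refinement of $\cP$. The associated branching function $f(u)=\log_\delta |\cP\cap Q|_{\delta^{u}}$ is then an increasing Lipschitz function on $[0,1]$ whose slope at scale $\Delta_j$ records the local dimension of $\cP$. Using the Lipschitz decomposition Lemma~\ref{lem:dictionary}, I would partition $[0,1]$ into finitely many intervals on each of which $f$ is either close to a linear function of a single slope $s'\in[0,2]$ (an AD-regular piece, handled by the Orponen--Shmerkin input applied at the appropriate effective dimension) or has slope pinned at the maximal value $2$ (a semi-well-spaced piece, handled by Proposition~\ref{prop: key}). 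A now-standard induction on the number of such pieces, in the style of \cite{orponen2021hausdorff}, multiplies the gains across scales and assembles them into the global exponent $(s+t)/2$.

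The main obstacle I expect is the alignment between the two inputs inside this induction. The AD-regular input tolerates $\delta^{-\eps}$ uniformity errors and outputs a bound phrased in terms of the regularity dimension, whereas the semi-well-spaced input requires both a $(2-s)$-type lower bound at scale $\Delta$ and a sharp Katz--Tao upper bound at scales below $\Delta$, two slightly different normalizations. Ensuring that the Lipschitz decomposition produces pieces whose hypotheses \emph{exactly} match one of the two inputs, and that the error budget of $\eta$ propagates multiplicatively through the induction without ballooning past the prescribed $\eps$, is the delicate step; the remaining steps are essentially bookkeeping once Proposition~\ref{prop: key} and Lemma~\ref{lem:dictionary} are in place.
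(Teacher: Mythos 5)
Your plan tracks the paper's structure essentially exactly: pigeonhole to a uniform set, read off the branching function, decompose the scale interval into AD-regular-looking pieces (fed into the Orponen--Shmerkin input) and semi-well-spaced pieces (fed into Proposition~\ref{prop: key}), then multiply the per-scale gains using the multi-scale incidence decomposition from \cite{orponen2023projections}. However, two points in your description of the decomposition are off and would cause the argument to fail as stated.

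First, you claim the semi-well-spaced pieces are those where the branching function ``has slope pinned at the maximal value $2$.'' That is not what Proposition~\ref{prop: key} needs nor what the decomposition produces. The semi-well-spaced hypothesis requires a \emph{two-slope} profile: slope $\approx 2-s$ on the coarse part of the interval (so that the coarsened set is a $(\Delta,2-s,\cdot)$-set, allowing Proposition~\ref{prop:s+t=2} to run at scale $\Delta$) and slope $\approx s$ on the fine part (the Katz--Tao condition). This is precisely condition (1)(b) of Proposition~\ref{prop:multiscale}: $(f,c_j,d_j)$ is $\eps$-superlinear with $s_f(c_j,d_j)\in[s,2-s]$ and $f(x)\ge\min\{f(c_j)+(2-s)(x-c_j),\,f(d_j)-s(d_j-x)\}-\eps(d_j-c_j)$. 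A piece with slope uniformly close to $2$ would instead fall into the $s+t\geq 2$ regime and does not need Lemma~\ref{lem:r-rich} at all, while a genuine two-slope piece cannot be handled by treating it as slope-$2$.

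Second, and more importantly, you allow AD-regular pieces of slope ``$s'\in[0,2]$,'' but the AD-regular input (Theorem~\ref{prop:furst-ADreg}, i.e.\ \cite[Theorem 5.7]{orponen2023projections}) is only available for effective dimension $t_j\in[s,2-s]$. The base decomposition \cite[Lemma 2.7]{orponen2023projections} indeed produces linear pieces with slope anywhere in $[0,u]$, and fixing this is a genuine issue: the paper's Proposition~\ref{prop:multiscale} is a nontrivial modification that merges low-slope linear pieces into adjacent superlinear pieces so that every surviving interval has average slope in $[s,2-s]$. This merging step, and the verification that the merged piece is still $\eps$-superlinear with the right two-slope lower envelope, is the delicate part you correctly anticipate as ``the main obstacle'' but do not resolve; without it, your outline would hand an AD-regular piece of slope $s'<s$ to an input that does not accept it.
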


Theorem~\ref{Thm: main} will be proved using induction on scales.  To do so, we need the following definition and pigeonholing lemma (Proposition~\ref{prop: refinement}) that will be used in the proof of Proposition~\ref{prop:s+t=2} and the proof of Proposition~\ref{prop: key}. 

\begin{definition}\label{def: cover}
   Fix $\delta\leq  \Delta \in 2^{-\mathbb{N}}$, $s\in [0,1], \,  C_{\delta}>0,  \, M_{\delta}\in \mathbb{N}$.   We say that a $\Delta$-configuration $(\cQ, \cT_{\Delta})$ covers a $(\delta, s, C_{\delta}, M_{\delta})$-nice configuration $(\cP, \cT)$ if 
   \begin{equation}\label{eq: cover}
       \sum_{\bp\in \cQ} \,\,  \sum_{p\in \bp\cap \cP}  \,\, \sum_{\bT\in \cT_{\Delta}(\bp)}|\cT_p\cap \bT|_{\delta} \gtrapprox_{\delta} |\cP|_{\delta} \cdot M_{\delta}. 
   \end{equation}
\end{definition}

\begin{prop}\label{prop: refinement}
    Fix $\delta\in 2^{-\mathbb{N}}, s\in (0,1], C>0, M\in \mathbb{N}$ and let $\Delta\in (\delta, 1)$. Let $(\cP, \cT)$ be a $(\delta, s, C, M)$-nice configuration, then there exists a  refinement $(\cP_1, \cT_1)$ of $(\cP, \cT)$  such that $\cP_1$ is a $\{1, \Delta, \delta\}$-uniform set and 
    \begin{enumerate}
        \item\label{it: thickening} $(\cP_1^{\Delta}, \cT_1^{\Delta})$, $ \cP_1^{\Delta}:=\cD_{\Delta}(\cP_1)$ and for any $\bp\in \cP_1^{\Delta}$,  $\cT_{1}( \bp)^{\Delta}:=\cD_{\Delta} (\underset{p\in \cP_1\cap \bp}{\cup} \cT_{1}(p))$, is a $(\Delta, s, C_{\Delta}, M_{\Delta})$-nice configuration with $C_{\Delta} \lessapprox_{\delta} C$ and  $M_{\Delta}\in \mathbb{N}$. 
        \item \label{it: refinement}     any refinement $(\cQ, \cT_{\Delta})$ of  $(\cP_1^{\Delta}, \cT_1^{\Delta})$ covers $(\cP, \cT)$. 
    \end{enumerate}
\end{prop}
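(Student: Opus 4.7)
The plan is to construct $(\cP_1, \cT_1)$ from $(\cP, \cT)$ by a sequence of dyadic pigeonholing steps, each losing at most a polylogarithmic factor in $\delta^{-1}$, so that every relevant multiplicity becomes uniform at the scales $\delta$ and $\Delta$. First I would apply a multiscale pigeonholing in the spirit of Lemma \ref{lem: pigeonhole} (or an elementary analogue) to the two dyadic gaps between $1$, $\Delta$ and $\delta$, producing $\cP_1 \subset \cP$ that is $\{1, \Delta, \delta\}$-uniform; in particular $|\cP_1 \cap \bp| = N_2$ is the same integer for every $\bp \in \cP_1^\Delta := \cD_\Delta(\cP_1)$.

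Next, for each $p \in \cP_1$, dyadically pigeonhole $\cT(p)$ by the multiplicity $|\cT(p) \cap T^\Delta|$, producing $\cT'(p) \subset \cT(p)$ with $|\cT'(p) \cap \bT| \sim \mu_p$ for every $\bT \in \cD_\Delta(\cT'(p))$; then pigeonhole across $p$ so that $\mu_p \sim \mu$ and $|\cD_\Delta(\cT'(p))| \sim N_\Delta = M/\mu$ are independent of $p$. A crucial consequence is that each $\cD_\Delta(\cT'(p))$ inherits the $(\Delta, s, C')$-set property with $C' \lessapprox_\delta C$, since uniform $\Delta$-branching yields
\[
\frac{|\cD_\Delta(\cT'(p)) \cap B(x,r)|}{|\cD_\Delta(\cT'(p))|} \sim \frac{|\cT'(p) \cap B(x,r)|_\delta}{|\cT'(p)|_\delta} \lesssim C r^s, \qquad r \in [\Delta, 1].
\]
Now introduce $m(\bp, \bT) := |\{p \in \cP_1 \cap \bp : \bT \in \cD_\Delta(\cT'(p))\}|$ and pigeonhole to obtain a common dyadic value $m$ and a subcollection $\cT_1^\Delta(\bp)$ of $\Delta$-tubes through $\bp$ with $m(\bp, \bT) \sim m$ for $\bT \in \cT_1^\Delta(\bp)$, and a further pigeonhole in $\bp$ making $|\cT_1^\Delta(\bp)| \sim M_\Delta$ uniform. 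Double counting gives $m M_\Delta \sim N_2 N_\Delta$. Set $\cT_1(p) := \{T \in \cT'(p) : T^\Delta \in \cT_1^\Delta(p^\Delta)\}$; since each term in $\sum_{p \in \cP_1 \cap \bp} |\cD_\Delta(\cT'(p)) \cap \cT_1^\Delta(\bp)|$ is at most $N_\Delta$ while the sum is $\sim N_2 N_\Delta$, pigeonholing forces each summand to be $\gtrapprox_\delta N_\Delta$, whence $|\cT_1(p)| \gtrapprox_\delta M$.

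The two claims follow by direct computation. For (1), summing the $(\Delta, s, C')$-bound over $p \in \cP_1 \cap \bp$ gives $m |\cT_1^\Delta(\bp) \cap B(x,r)| \le \sum_{p} |\cD_\Delta(\cT'(p)) \cap B(x,r)| \lesssim C' r^s N_2 N_\Delta \sim C' r^s \cdot m |\cT_1^\Delta(\bp)|$, so $C_\Delta \lessapprox_\delta C$. For (2), any refinement $(\cQ, \cT_\Delta)$ of $(\cP_1^\Delta, \cT_1^\Delta)$ satisfies $\sum_{\bp \in \cQ} |\cT_\Delta(\bp)| \gtrapprox_\delta |\cP_1^\Delta| M_\Delta$, and the uniform multiplicities $\mu, m$ yield
\[
\sum_{\bp \in \cQ} \sum_{p \in \bp \cap \cP_1} \sum_{\bT \in \cT_\Delta(\bp)} |\cT_1(p) \cap \bT|_\delta \sim \mu m \sum_{\bp \in \cQ} |\cT_\Delta(\bp)| \gtrapprox_\delta \mu m |\cP_1^\Delta| M_\Delta \sim \mu N_2 N_\Delta |\cP_1^\Delta| \sim M |\cP_1|,
\]
which is the covering condition. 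The main subtlety is the \emph{ordering} of the pigeonholes: uniform $\Delta$-branching of the individual $\cT'(p)$ must be secured before the uniform $\Delta$-multiplicity across $p \in \bp$, as it is exactly this two-step structure that prevents the union $\bigcup_{p \in \cP_1 \cap \bp} \cD_\Delta(\cT'(p))$ defining $\cT_1^\Delta(\bp)$ from inflating $C_\Delta$ beyond the allowed polylogarithmic factor.
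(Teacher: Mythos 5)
Your proof follows the same dyadic pigeonholing strategy as the paper: uniformize $\cP$ and each $\cT(p)$ at the scales $\{1,\Delta,\delta\}$, pigeonhole the cross-$p$ multiplicity to a constant $m$, and then establish both the $(\Delta,s,C_\Delta)$-set property and the covering condition by double counting (the paper performs a single combined double count for $\cT_1(\bp)$, while you first extract the $(\Delta,s,C')$-property for each individual $\cD_\Delta(\cT'(p))$ and then sum---an equivalent reorganization). One small overclaim to flag: from ``$\sum_{p\in\cP_1\cap\bp}|\cD_\Delta(\cT'(p))\cap\cT_1^\Delta(\bp)|\sim N_2N_\Delta$ with each summand $\le N_\Delta$'' you cannot conclude that \emph{each} summand is $\gtrapprox_\delta N_\Delta$; this only follows for a $\gtrapprox_\delta 1$ fraction of $p\in\cP_1\cap\bp$, and extracting it for all $p$ would require a further refinement. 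However, the per-$p$ lower bound $|\cT_1(p)|\gtrapprox_\delta M$ is never actually used: the refinement condition in Definition \ref{def: refinement} and the covering condition in Definition \ref{def: cover} only involve the sum $\sum_p|\cT_1(p)|$, which you correctly estimate as $\sim\mu m|\cP_1^\Delta|M_\Delta\sim M|\cP_1|$, so the overclaim is harmless and the argument is sound.
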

The proof of Proposition~\ref{prop: refinement} is a slight modification of \cite[Proposition 4.1]{orponen2021hausdorff}, which we include for completeness. 
\begin{proof}
By Lemma~\ref{lem: pigeonhole}, for every $p\in \cP$, there exists a $\{1, \Delta, \delta\}$-uniform subset $\cT_1(p)\subset \cT(p)$ with $|\cT_1(p)|\approx_{\delta} M$. Let $m(p)=|\cT_1(p)|_{\Delta}$. Then for each $\bT\cap  \cT_1(p) \neq \emptyset$, $|\bT\cap \cT_1(p)|_{\delta} \approx_{\delta} \frac{M}{m(p)}$.
By pigeonholing and Lemma~\ref{lem: pigeonhole} again, 
we can find a dyadic number $m$ and a $\{1, \Delta, \delta\}$-uniform subset   $\cP_1\subset \cP$, $|\cP_1|_{\delta} \gtrapprox_{\delta} |\cP|_{\delta}$    and  for every  $p\in \cP_1$, $m\sim m(p)$.     

For each $\bp \in \cD_{\Delta}(\cP_1)$, there exists a dyadic number $X(\bp)$ and  a subset $\cT_1(\bp)\subset \cT^{\Delta}$ satisfying  for each $\bT\in \cT_1(\bp)$, 
\[
|\{ p\in \cP_1\cap \bp: \bT\in \cD_{\Delta}(\cT_1(p))\}|\sim X(\bp)
\]
and 
\begin{equation}\label{eq: X}
X(\bp) \cdot |\cT_1(\bp)|_{\Delta} \approx_{\Delta}|\cP_1\cap \bp|_{\delta} \cdot  m
\end{equation}

By pigeonholing, there exists a common dyadic number $X$ and a refinement of $\cP_1$ 
 $\cP_1$ at resolution $\Delta$, which we still denote $\cP_1$, such that  for each $\bp\in \cD_{\Delta}(\cP_1)$, $X(\bp)=X$.  We claim that  for each $\bp\in \cD_{\Delta}(\cP_1)$, $\cT_1(\bp)$ is $(\Delta, s, C_{\Delta})$-set for $C_{\Delta} \approx_{\delta} C$ and
 \begin{equation}\label{eq: cT1bp} |\cT_1(\bp)|_{\Delta} \approx_{\delta} M_{\Delta}:=  \frac{ |\cP_1\cap \bp|_{\delta} \cdot  m }{  X} \end{equation} 
 (since $\cP_1$ is $\{1, \Delta, \delta\}$-uniform, it does not matter which $\bp\in \cD_{\Delta}(\cP_1)$ we take in the definition of $M_{\Delta}$).    To see this,  for any $\rho \in (\Delta, 1)$, and any $\bT_{\rho}\in \cT^{\rho}$,  double count the number of pairs $(p, T)\in (\cP_1\cap \bp )\times \cT$ such that $ T\in \cT_1(p)\cap \bT_{\rho}$, we have 
\begin{align*}
|\cT_1(\bp)\cap \bT_{\rho}|_{\Delta}   &  \approx_{\delta}  \frac{ |\cP_1\cap \bp|_{\delta} \cdot |\cT_1(p)\cap \bT_{\rho}|_{\delta} \cdot m} { M\cdot  X} \\
&\lessapprox_{\delta} C \rho^{s} \frac{ |\cP_1\cap \bp|_{\delta} \cdot m }{X} \approx_{\delta} C\rho^s \cdot |\cT_1(\bp)|_{\Delta}. 
\end{align*}
The first line implies the second line because $\cT_1(p) \subset \cT(p)$ and $\cT(p)$ is a $(\delta, s,  C)$-set.

For any $p\in \cP_1$, replace   $\cT_1 (p)$ by $\{T\in \cT_1(p):  T^{\Delta} \in \cT_1(p^{\Delta})\}$ (this is a refinement at resolution $\Delta$).  Then \begin{align*}
    \sum_{p\in \cP_1} |\cT_1(p)|_{\delta} & =\sum_{\bp\in \cD_{\Delta}(\cP_1)} \sum_{p\in \cP_1\cap \bp} \sum_{\bT\in  \cT_1(\bp)} |\cT_1(p)\cap \bT|_{\delta} \\
    & =\sum_{\bp\in \cD_{\Delta}(\cP_1)}  \sum_{\bT\in  \cT_1(\bp)} \sum_{p\in \cP_1\cap \bp} |\cT_1(p)\cap \bT|_{\delta} \\
    & \gtrapprox_{\delta} \sum_{\bp\in \cD_{\Delta}(\cP_1)} |\cT_1(\bp)|_{\Delta} \cdot  \frac{M}{m} X   \\
    &\gtrapprox_{\delta} M \cdot |\cP_1|_{\delta} \gtrapprox_{\delta} M \cdot |\cP|_{\delta}. 
\end{align*} 
The third inequality is because $\sum_{p\in \cP_1\cap \bp} |\cT_1(p)\cap \bT|_{\delta}\sim X\cdot \frac{M}{m}$ and the fourth inequality is due to \eqref{eq: cT1bp} and uniformity of $\cP_1$. 

By definition of $\cT_1(p)$, we have $\cD_{\Delta}(\cup_{p\in \cP_1\cap \bp}\cT_1(p)) = \cT_1(\bp)$ for any $\bp\in \cP_1^{\Delta} : =\cD_{\Delta}(\cP_1)$, and so \eqref{it: thickening} holds. To verify \eqref{it: refinement}, for any refinement $(\cQ_{\Delta}, \cT_{\Delta})$ of  $(\cP_1^{\Delta}, \cT_1^{\Delta})$, 
\begin{align*}
    \sum_{\bp\in \cQ_{\Delta}}  \, \sum_{p\in \cP\cap \bp}  \, \sum_{\bT\in \cT_{\Delta}(\bp)} |\cT(p)\cap \bT|_{\delta} & \gtrapprox_{\delta}     \sum_{\bp\in \cQ_{\Delta}} \, \sum_{p\in \cP_1\cap \bp} \,  \sum_{\bT\in \cT_{\Delta} (\bp)} |\cT_1(p)\cap \bT|_{\delta} \\
     & =   \sum_{\bp\in \cQ_{\Delta}} \,  \sum_{\bT\in \cT_{\Delta} (\bp)} \, \sum_{p\in \cP_1\cap \bp}  |\cT_1(p)\cap \bT|_{\delta} \\
    &\gtrapprox_{\delta} \sum_{\bp\in \cQ} |\cT_{\Delta} (\bp)|_{\Delta} \cdot  \frac{M}{m} X    \\
    &\gtrapprox_{\delta} \sum_{\bp\in \cD_{\Delta}(\cP_1)} |\cT_1(\bp)|_{\Delta} \cdot  \frac{M}{m} X  \gtrapprox_{\delta} M\cdot |\cP|_{\delta}. 
\end{align*}
In the second to last inequality, we used the assumption that $(\cQ_{\Delta}, \cT_{\Delta})$ is a refinement of $(\cP_1^{\Delta}, \cT_1^{\Delta})$.

\end{proof}

\subsection{The case $s + t \geq 2$}

Then the conclusion of Theorem \ref{Thm: main} becomes $|\T|_\delta \gesim_\eps \delta^{-1+\eps} M$. By double counting the number of ``restricted incidences'' $|\{ (p, T) \in \cP \times \cT : T \in \T(p) \}|$, we can equivalently say the average tube in $\T$ belongs to $\T(p)$ for $\lesim \delta^{1-\eps} |\cP|_\delta$ many $\delta$-cubes $p \in \cP$. For our purposes, we need the following stronger statement: every tube in $\T$ contains $\lesim \delta^{1-\eps} |\cP|_\delta$ many $\delta$-cubes in $\cP$. Furthermore, we need this estimate to be true at all scales $w \in (\delta, 1)$, in the quantitative form presented below. These strong properties may not be true of the original configuration $(\cP, \cT)$, but will be true after passing to a refinement.

\begin{prop}\label{prop:s+t=2}
    Fix $ \Delta < \Delta_0$, $n \in \N$, and let $\delta = \Delta^n$. (Here $\Delta_0 \in ( 0, 1)$ is an absolute constant.) Let $(\cP_0, \cT_0)$ be a $(\delta, s, C_0, M_0)$-nice configuration with $\cP$ a $(\delta, t, C_0)$-set, $s+t\geq 2$. Then there exists a refinement $(\cP, \cT)$ of $(\cP_0, \cT_0)$ such that for any $1 \le k \le n$ and any $T\in \cT$, $w = \Delta^k$, $\eps\in (0, 1/n]$, 
    \begin{equation}\label{eqn:cor s+t=2}
        |6(T^w) \cap \cD_w (\cP)| \lessapprox_{\delta, \eps} C_0^2 \Delta^{-1} \cdot |\cP|_w  \cdot w^{1-\eps}.
    \end{equation}
    Here $|6(T^w)\cap \cD_w(\cP)|$ means the number of $\bp\in \cD_w(\cP)$ contained in $6(T^w)$, the $6w$-neighborhood of $T^w$. 
    The notation $\lessapprox_{\delta, \eps}$ means  $\leq C_\eps |\log \delta|^{C}$, where the implicit constants $C$ and $C_{\eps}$ may depend on $\Delta_0$ and $n$ but is independent of $\Delta$, $C_0$, or $M_0$, in addition, $C_{\eps}$ depends on $\eps$. 
\end{prop}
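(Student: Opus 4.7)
The plan is to build the refinement $(\cP, \cT)$ by applying the high-low method (Proposition~\ref{prop:high low}) at each dyadic scale $w = \Delta^k$, $k = 1, \dots, n$, to control the set of rich $w$-tubes, and then to pigeonhole so as to excise $\delta$-incidences through these rich tubes. The first step is to apply Lemma~\ref{lem: pigeonhole} to pass to a $\{\Delta^j\}_{j=1}^n$-uniform refinement $\cP_1 \subset \cP_0$ with $|\cP_1|_\delta \gtrapprox_\delta |\cP_0|_\delta$. Uniformity provides the crucial identity $|\cP_1 \cap p'|_\delta = |\cP_1|_\delta / |\cP_1|_w$ for every $p' \in \cD_w(\cP_1)$, which is needed to convert between $\delta$-scale and $w$-scale incidence counts, and (via the $(\delta,t,C_0)$-set property inherited from $\cP_0$ up to polylog factors) yields $|\cP_1|_w \gtrapprox_\delta C_0^{-1} w^{-t}$.

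For each scale $w = \Delta^k$, define the rich-tube set $\cT^w_{\mathrm{rich}} := \{\bT \in \cT^w : |6\bT \cap \cD_w(\cP_1)| > X_w\}$ with $X_w := C_0^2 \Delta^{-1} |\cP_1|_w w^{1-\eps}$. Apply Proposition~\ref{prop:high low} to the configuration $(\cD_w(\cP_1), \cT^w_{\mathrm{rich}})$ at scale $w$, weighted by $\mu(p') := |\cP_1 \cap p'|_\delta$. Because the threshold $X_w$ exceeds the ``average'' count $w^{1-\eps}|\cP_1|_w$ by a factor $C_0^2 \Delta^{-1} \gg 1$, the low-frequency term of the inequality is dominated by the high-frequency term, yielding $|\cT^w_{\mathrm{rich}}| \lessapprox_\delta C_0^{-4} \Delta^2 w^{-4+2\eps}/|\cP_1|_w$. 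Using the $(\delta, s, C_0)$-set structure of each $\cT_0(p)$ (which bounds the number of $\delta$-tubes in $\cT_0(p)$ sharing any fixed $w$-scale thickening by $\lesim C_0 w^s M_0$) together with the $(\delta, t, C_0)$-set property of $\cP_1$, this translates into a bound on $N_{\mathrm{rich}}(w) := \sum_p |\{T \in \cT_0(p) : T^w \in \cT^w_{\mathrm{rich}}\}|$, the number of $\delta$-incidences to be removed at scale $w$.

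The main obstacle is that in the critical case $s + t = 2$ with small $w$ (i.e., large $k$), the naive per-scale bound $N_{\mathrm{rich}}(w) \lessapprox \Delta w^{s+t-3+\eps} M_0 |\cP_1|_\delta = \Delta^{1-k+k\eps} M_0 |\cP_1|_\delta$ is too weak to sum to a negligible fraction of the total incidence count $\sim M_0 |\cP_1|_\delta$. To overcome this I would iterate the high-low analysis across scales: at scale $w = \Delta^k$, apply Proposition~\ref{prop:high low} with parameter $S = \Delta^{-1}$, so the low-frequency tubes live at scale $\Delta^{k-1}$, and use the rich-tube bound already established inductively at scale $\Delta^{k-1}$, combined with the uniformity of $\cP_1$, to bound the low-frequency term. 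Telescoping the scales should give a sharp, summable bound on $\sum_k N_{\mathrm{rich}}(\Delta^k)$. The hard part is to balance the high-frequency and low-frequency contributions so that the induction closes with only a polylogarithmic loss in the compounded constants at each of the $n$ scales, which matches the $\lessapprox_{\delta,\eps}$ tolerance.

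Once the summed bound $\sum_k N_{\mathrm{rich}}(\Delta^k) \lessapprox_\delta M_0 |\cP_1|_\delta$ has been established, set $\cT := \cT_0 \setminus \bigcup_{k=1}^n \{T \in \cT_0 : T^{\Delta^k} \in \cT^{\Delta^k}_{\mathrm{rich}}\}$ with $\cT(p) := \cT_0(p) \cap \cT$, and apply a pigeonhole argument as in Proposition~\ref{prop: refinement} to select $\cP \subset \cP_1$ so that $(\cP, \cT)$ satisfies the requirements of Definition~\ref{def: refinement}. By construction, every remaining $T \in \cT$ satisfies $T^{\Delta^k} \notin \cT^{\Delta^k}_{\mathrm{rich}}$ for every $k$, which gives $|6(T^w) \cap \cD_w(\cP)| \le |6(T^w) \cap \cD_w(\cP_1)| \le X_w$; since $|\cP_1|_w \approx_\delta |\cP|_w$ after the final refinement, this yields the conclusion of the proposition up to the polylogarithmic loss absorbed into $\lessapprox_{\delta,\eps}$.
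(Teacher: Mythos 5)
Your high-level strategy (control rich tubes scale-by-scale via the high-low inequality, then excise, then pigeonhole a refinement) is in the right spirit, and the idea of feeding coarser-scale richness bounds into the low-frequency term to improve the per-scale estimate is essentially the right fix. But the inductive scheme as you describe it has a genuine gap that the paper's proof is structured specifically to avoid.

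You propose to bound, at scale $w=\Delta^k$, the low-frequency term $I(\cD_w(\cP_1),(\cT^w_{\mathrm{rich}})^{\Delta^{k-1}})$ by ``using the rich-tube bound already established inductively at scale $\Delta^{k-1}$.'' That bound, however, only controls $|6\bT'\cap\cD_{\Delta^{k-1}}(\cP_1)|$ for $\Delta^{k-1}$-tubes $\bT'$ that are \emph{not} in $\cT^{\Delta^{k-1}}_{\mathrm{rich}}$. In your setup $\cT^w_{\mathrm{rich}}$ is defined globally from $\cP_1$ with no restriction ensuring that its $\Delta^{k-1}$-thickenings avoid $\cT^{\Delta^{k-1}}_{\mathrm{rich}}$. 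A rich $w$-tube may very well have a rich $\Delta^{k-1}$-ancestor — in fact the ``bad'' tubes one wants to excise often exhibit exactly this nested richness — and for those the low-frequency contribution is entirely uncontrolled by the induction. You would need to stratify $\cT^w_{\mathrm{rich}}$ according to whether the $\Delta^{k-1}$-ancestor is rich and then show the rich-ancestor part has small cardinality in a quantitative enough way (each rich $\Delta^{k-1}$-tube contains up to $\Delta^{-2}$ child $w$-tubes, so a crude count loses a factor $\Delta^{-2}$); your write-up does not address this, and it is not routine.

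The paper's proof sidesteps the problem by structuring the induction as an iterative refinement from coarse to fine, rather than excising rich tubes at all scales simultaneously. It first produces (via Proposition~\ref{prop: refinement} and the induction hypothesis at parameter $n-1$) a refined $\Delta^{n-1}$-configuration $(\cQ_{\Delta^{n-1}},\cT_{\Delta^{n-1}})$ already satisfying the richness bound at scales $\Delta,\dots,\Delta^{n-1}$, and then only retains $\delta$-tubes $T$ with $T^{\Delta^{n-1}}\in\cT_{\Delta^{n-1}}$. This structurally guarantees that every surviving $\delta$-tube's $\Delta^{n-1}$-ancestor is non-rich, so when the high-low inequality is invoked at scale $\delta$ with $S=\Delta^{-1}$, the low-frequency term is genuinely controlled by the coarser-scale bound; there is no rich-ancestor case to worry about. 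Also note, separately, that your first-pass rich-tube bound $|\cT^w_{\mathrm{rich}}|\lessapprox_\delta C_0^{-4}\Delta^2 w^{-4+2\eps}/|\cP_1|_w$ corresponds to taking $S=w^{-1}$ in Proposition~\ref{prop:high low} (so the low-frequency lives at scale $1$, where it trivially is dominated), which is indeed too lossy; when you switch to $S=\Delta^{-1}$ the low-frequency is no longer trivially dominated and the rich-ancestor issue above becomes unavoidable.
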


\begin{remark}
    This proposition will be used in proving Proposition \ref{prop: key} with the parameter $n = \eps^{-1}$. Thus, $\Delta^{-1}$ should be viewed as a small power of $\delta^{-1}$.
\end{remark}

\begin{proof}
    We induct on $n$. Base case $n = 1$ is the trivial fact $|T \cap \cD_{\Delta} (\cP)| \le \Delta^{-1}$ (which is why we have a $\Delta^{-1}$ term on the RHS of \eqref{eqn:cor s+t=2}).

    For the inductive step, assume true for $n-1$, and we will now consider $n$. We first refine $\cP_0$ to be uniform at scales $\{ \Delta^i \}_{i=0}^n$ (and call the refinement $\cP_0$). Next, we apply Proposition~\ref{prop: refinement} with $\Delta^{n-1}$ in the place of $\Delta$ to obtain a refinement $(\cP_1, \cT_1)$ of $(\cP_0, \cT_0)$ and the corresponding $(\Delta^{n-1}, s, C_{\Delta^{n-1}}, M_{\Delta^{n-1}})$-nice configuration $(\cP_1^{\Delta^{n-1}}, \cT_1^{\Delta^{n-1}})$. By the induction hypothesis, we can find a refinement $(\cQ_{\Delta^{n-1}}, \cT_{\Delta^{n-1}})$ of $(\cP_1^{\Delta^{n-1}}, \cT_1^{\Delta^{n-1}})$ such that for any $1\leq k\leq n-1$ and any $\bT\in \cT_{\Delta^{n-1}}$, $w=\Delta^k$, 
        \begin{equation}\label{eqn:cor s+t=2 ind hyp}
        |6(\bT^w)\cap \cD_w (\cQ_{\Delta^{n-1}})| \lessapprox_{\delta,\eps} C_0^2 \Delta^{-1} \cdot |\cQ_{\Delta^{n-1}}|_w \cdot w^{1-\eps}.
    \end{equation}

Let $\cP_2 := \bigcup_{\bp \in \cQ_{\Delta^{n-1}}} (\cP_1 \cap \bp)$ and for each $p \in \cP_2$, let $\cT_2 (p) := \{ T \in \cT_1 (p) : T^{\Delta^{n-1}} \in \cT_{\Delta^{n-1}} \}$. Then by conclusion~\eqref{it: refinement} of Proposition~\ref{prop: refinement}, 
\begin{equation}
    \label{eqn:many tubes left}
    \sum_{p\in \cP_2}|\cT_2(p)|_{\delta}\gtrapprox_{\delta} |\cP_1|_{\delta}\cdot M_0
\end{equation}

% ****

% Then $(\cP_2, \cT_2)$ is a sub-configuration of $(\cP_1, \cT_1)$ with the desired property $\cD_{\Delta^{n-1}} (\T_2 (p)) \subset \T'_{\Delta^{n-1}} (p)$ for all $p \in \cP_2$, but unfortunately $(\cP_2, \cT_2)$ isn't a refinement of $(\cP_1, \cT_1)$. The saving grace is that $(\cP_2, \cT_2)$ is a big sub-configuration of $(\cP_1, \cT_1)$. Indeed, we sum \eqref{eqn:m1 X} over $\bp\in \cQ'_{\Delta^{n-1}} \subset \cQ_{\Delta^{n-1}}$ and $\bT\in \cT'_{\Delta^{n-1}} (\bp) \subset \cT_{\Delta^{n-1}} (\bp)$ to get
% \begin{align}
%     \sum_{p \in \cP_1} |\T_2 (p)| &=\sum_{\bp\in \cQ'_{\Delta^{n-1}}} \sum_{\bT\in \cT'_{\Delta^{n-1}} (\bp)} |\{(p, T): p\in \bp\cap \cP_2, T\in \cT_2(p)\cap \bT \}| \nonumber\\
%     &\gtrapprox_\delta \sum_{\bp\in \cQ'_{\Delta^{n-1}}} \sum_{\bT\in \cT'_{\Delta^{n-1}} (\bp)} m_1 \cdot X \gtrapprox_\delta |\cD_{\Delta^{n-1}} (\cP_1)| \cdot m \cdot m_1 \cdot X \gtrapprox_{\delta} |\cP_1|_{\delta} \cdot M. \label{eqn:many tubes left}
% \end{align}
Roughly speaking, \eqref{eqn:many tubes left} tells us that the configuration $(\cP_2, \cT_2)$ ``preserves many of the restricted incidences'' of $(\cP_1, \cT_1)$, where the set of restricted incidences is $\{ (p, T) \in \cP_1 \times \cT_1 : T \in \cT_1 (p) \}$.  By pigeonholing, there exists $r$ such that defining $\cT= \{ T \in \cT_2  : |6T \cap \cP_2|\sim r \} $ and for each $p\in \cP_2$, $\cT(p): =\cT_2(p)\cap \cT$, 
\[
\sum_{p\in \cP_2} |\cT(p)|_{\delta}\gtrapprox_{\delta} |\cP_1|_{\delta} \cdot M_0. 
\]

% if we replace $\cT_2$ by a well-chosen subset $\cT_{2,r} := \{ T \in \cT_2  : |T_\ord \cap \cP_2|\sim r \}$ and $\cT$ (so we also replace $\cT_2$ by $\cT_2 (p) \cap \cT_{2,r}$). Finally, using \eqref{eqn:many tubes left}, we are able to find a common refinement $(\cP, \cT)$ of $(\cP_2, \cT_2)$ and $(\cP_1, \cT_1)$ at resolution $\delta$ such that $\cT \subset \cT_{2,r}$.

After this procedure, we have two useful properties for each $T \in \cT$:
\begin{itemize}
    \item $|6T \cap \cP_2|\sim r$ where $r \in [1, \delta^{-1}]$;

    \item $T^{\Delta^{n-1}} \in \T_{\Delta^{n-1}}$ (because $T \in \T_2 (p)$ for some $p \in \cP_2$), so \eqref{eqn:cor s+t=2 ind hyp} applies to $T^{\Delta^{n-1}}$.
\end{itemize}
%Furthermore, by property of refinement, we have $M := |\T(p)| \geapp M_0 \ge C^{-1} \delta^{-s}$ for any $p \in \cP$. 

Now, we check \eqref{eqn:cor s+t=2}. Fix $T \in \T$ and $w = \Delta^k$, $1 \le k \le n$. If $1 \le k \le n-1$, then \eqref{eqn:cor s+t=2 ind hyp} applied to $T^{\Delta^{n-1}}$ gives
\begin{equation*}
    |6(T^w) \cap \cD_w (\cP_2)| \lessapprox_{\delta,\eps} C_0^2 \Delta^{-1} \cdot |\cQ_{\Delta^{n-1}}|_w \cdot w^{1-\eps} \leapp_\Delta C_0^2 \Delta^{-1} \cdot |\cP_2|_w \cdot w^{1-\eps}.
\end{equation*}
Now, we check \eqref{eqn:cor s+t=2} for $k = n$. For this, it suffices to obtain an upper bound on $r$. To estimate $r$, we use the high-low method (Proposition \ref{prop:high low}) to study the number of (unrestricted) incidences between $\cT$ and $\cP_2$ defined in Definition~\ref{def: incidence} with $w(p)=1$.  Let $S = \Delta^{-1} = \delta^{-1/n}$  in Proposition \ref{prop:high low}; then
\begin{equation}\label{eqn:prop 4.2 high low}
    I(\cP_2, \T) \lesim_{ \eps} \Delta^{-1/2} |\cP_2|^{1/2} (|\cT|_{\delta} \delta^{-1})^{1/2} + \Delta^{1-\eps} I(\cP_2, \T^{\Delta^{n-1}}).
\end{equation}
If the first term on the right-hand side dominates, then the facts $I(\cP_2, \T) \geapp_\Delta \max(|\cP_2|_{\delta} M_0, r |\cT|_{\delta})$ imply that
\[
(|\cP_2|_{\delta} M)^{1/2} (r |\cT|_{\delta})^{1/2} \leapp_\Delta I(\cP_2, \T) \lesim_{ \eps} \Delta^{-1/2} |\cP_2|^{1/2} (|\cT|_{\delta} \delta^{-1})^{1/2}.
\]
Since $|\cP_2|_{\delta} \gtrapprox_{\delta} |\cP_0|_{\delta} \ge C_0^{-1} \delta^{-t}$, $M_0 \ge C_0^{-1} \delta^{-s}$, and $s + t \ge 2$, this implies that 
\[
r \leq \Delta^{-1} \delta^{-1} M_0^{-1}\leq C_0^2 \Delta^{-1} \cdot |\cP|_{\delta} \cdot  \delta.
\]

If the second term on the RHS of \eqref{eqn:prop 4.2 high low} dominates, let $\bp\in \cD_{\Delta^{n-1}}(\cP_2)$, then 
%since $(T_\ord)^{\Delta^{n-1}} \subset (T^{\Delta^{n-1}})_\ord$ for $\Delta < \frac{1}{6}$ by Lemma \ref{lem:thicken dyadic tube}, we have
\begin{align*}
    r \cdot |\cT|_{\delta} &\lesim_{\eps} \Delta^{1-\eps}  |\{ (p, T)\in \cP_2 \times \cT: p^{\Delta^{n-1}} \subset 6(T^{\Delta^{n-1}})\}| \\
    &\lesim_{\eps} \Delta^{1-\eps}  |\{ (\bp, T)\in \cD_{\Delta^{n-1}}(\cP_2)\times \cT: \bp\subset 6(T^{\Delta^{n-1}})\}| \cdot \sup_{\bp \in \cD_{\Delta^{n-1}} (\cP_2)}|\cP_2 \cap \bp|_\delta \\
    &\lessapprox_{\delta,\eps}  \Delta^{1-\eps}   |\T|_{\delta} \cdot C_0^2 \cdot |\cP_2|_{\Delta^{n-1}} \cdot \Delta^{-1}   \cdot  \Delta^{(n-1)(1-\eps)} \cdot \frac{|\cP_2|_\delta}{|\cP_2|_{\Delta^{n-1}}} \\
    &\lessapprox_{\delta,\eps} |\cP_2|_{\delta}\cdot  |\cT|_{\delta} \cdot\Delta^{-1} \cdot  C_0^2 \delta^{1-\eps}.
\end{align*}
The third step follows from the inductive hypothesis \eqref{eqn:cor s+t=2 ind hyp} applied to $\bT = T^{\Delta^{n-1}} \in \cD_{\Delta^{n-1}} (\T_2) \subset \T_{\Delta^{n-1}} (\bp)$ with $k = n-1$ (using $\cD_{\Delta^{n-1}} (\cP_2) = \cQ_{\Delta^{n-1}}$), and uniformity of $\cP_2$ at scale $\Delta^{n-1}$.
Therefore, 
\[
r \lessapprox_{\delta,\eps} C_0^2 \Delta^{-1} |\cP_2|_{\delta} \cdot \delta^{1-\eps}.
\]
This holds in either case with $\cP=\cP_2$ and thus we have completed the proof of the inductive step.

% The base case of the induction is 
% \[
% r \cdot |\cT|\lesssim \delta |\cP_0| \cdot |\cT|.
% \]
% Rearrange gives 
% \[
% r\leq \delta |\cP_0|\leq |\cP_0|\delta^{1-\epsilon}.
% \]
\end{proof}
\subsection{When $\cP$ is $(2-s)$-dimensional at high scales and $s$-dimensional at low scales}

\begin{prop}\label{prop: key}
    Let $\delta, \Delta \in 2^{-\mathbb{N}}, \delta\leq \Delta$, $\eps > 0$, and $(\cP, \cT)$ be a $(\delta, s, \delta^{-\eps^2}, M)$-nice configuration with $s\in (0, 1]$. Suppose that
   % Suppose  $\cP\subset \mathbb{R}^2$ be a   $(\delta, s, \delta^{-\eta})$-set such that for each $p\in \cP$, there is a $(\delta, t, \delta^{-\eta})$-set $\cT(p)$ containing $p$ and $|\cT(p)|\sim \mathcal{M}$. 
    %Let $\cT=\cup_{p\in \cP} \cT(p)$, then 
    $\cP \subset \cD_\delta$ satisfies the spacing conditions
    \begin{gather}
        |\cP \cap B(x, \rho)|_{\delta} \le \delta^{-\eps^2} \cdot \rho^{2-s} \cdot |\cP|_{\delta}, \qquad \Delta < \rho < 1, \\
        %|\cP_0 \cap B(x, r)| \le C_1 \Delta^{2-s} (r/\Delta)^s \cdot |\cP_0|, \qquad \delta < r < \Delta.
        |\cP \cap B(x, \rho)|_{\delta} \le \delta^{-\eps^2} \cdot (\rho/\delta)^s, \qquad \delta < \rho < \Delta. \label{eqn:spacing condition 2}
    \end{gather}
    Then $|\cT|_{\delta} \gtrsim_\eps \delta^{35\eps} \cdot \min(M|\cP|_{\delta}, M^{3/2} |\cP|_{\delta}^{1/2}, \delta^{-1} M)$.
\end{prop}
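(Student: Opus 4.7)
The plan is to derive Proposition~\ref{prop: key} from the high-low method (Proposition~\ref{prop:high low}) applied at scale $\Delta$, combined with the $s+t=2$ estimate (Proposition~\ref{prop:s+t=2}) also applied at scale $\Delta$. The semi-well-spaced structure of $\cP$---namely that $\cP^\Delta := \cD_\Delta(\cP)$ is $(2-s)$-dimensional at coarse scales while each $\Delta$-cube contains at most $\delta^{-\eps^2}(\Delta/\delta)^s$ many $\delta$-cubes of $\cP$---is exactly what allows the $s+t=2$ estimate to fire at scale $\Delta$ and then be transferred to the $\delta$-scale via the Katz-Tao bound.

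First I would use Proposition~\ref{prop: refinement} and pigeonholing to pass to a refinement where (i) every $\bp \in \cP^\Delta$ contains the same number $N$ of $\delta$-cubes of $\cP$; (ii) every tube $T \in \cT$ has the same richness $r := |6T \cap \cP|_\delta$; and (iii) the induced configuration $(\cP^\Delta, \cT^\Delta)$ is a $(\Delta, s, \delta^{-O(\eps^2)}, M_\Delta)$-nice configuration, with $\cP^\Delta$ a $(\Delta, 2-s, \delta^{-O(\eps^2)})$-set. Since this scale-$\Delta$ configuration satisfies $s + (2-s) = 2$, Proposition~\ref{prop:s+t=2} applied with base scale $\Delta_0 = \delta^{\eps^2}$ yields, after a further refinement of $\cT^\Delta$, the richness bound $|6\bT \cap \cP^\Delta| \lessapprox \delta^{-O(\eps^2)} |\cP^\Delta|\Delta$ for every $\bT \in \cT^\Delta$. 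Lifting to scale $\delta$ by multiplying by the Katz-Tao cap gives $|6T^\Delta \cap \cP|_\delta \lessapprox \delta^{-O(\eps^2)}(\Delta/\delta)^s|\cP^\Delta|\Delta$ for every $T$ in the refined $\cT$, whence $I(\cP, \cT^\Delta) \lesssim |\cT| \cdot \delta^{-O(\eps^2)}(\Delta/\delta)^s|\cP^\Delta|\Delta$.

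Next I would apply Proposition~\ref{prop:high low} with $S = \Delta/\delta$ and trivial weights, giving
\[
r|\cT| \leq I(\cP, \cT) \lesim_\eps (\Delta\delta^{-2}|\cT||\cP|)^{1/2} + (\delta/\Delta)^{1-\eps}I(\cP, \cT^\Delta).
\]
Together with the restricted-incidence lower bound $r|\cT| \gtrsim \delta^{O(\eps)}M|\cP|$ and the low-frequency estimate above, a case analysis on which term dominates produces two inequalities: the high-frequency case gives $|\cT| \gtrsim \delta^{O(\eps)}M^2|\cP|\delta^2/\Delta$, while the low-frequency case gives $|\cT| \gtrsim \delta^{O(\eps)}M|\cP|/(\Delta^s\delta^{1-s}|\cP^\Delta|)$. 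Depending on which upper bound on $|\cP^\Delta|$ one substitutes---the trivial $|\cP^\Delta| \leq |\cP|$, the Katz-Tao saturation relation $|\cP^\Delta| \cdot (\Delta/\delta)^s \gtrsim \delta^{\eps^2}|\cP|$, or the coarse-dimension bound $|\cP^\Delta| \lesssim \Delta^{-(2-s)}\delta^{-\eps^2}$---the low-frequency case contributes additional lower bounds, among which the saturation bound recovers the $M/\delta$ regime; the remaining regimes $M|\cP|$ and $M^{3/2}|\cP|^{1/2}$ emerge from the high-frequency inequality together with the coarse-dimension upper bound on $|\cP^\Delta|$.

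The hard part will be the final combining step: I must verify that the maximum of the above bounds always exceeds $\delta^{35\eps}\min(M|\cP|, M^{3/2}|\cP|^{1/2}, M/\delta)$, regardless of the relationship between the problem's fixed parameters $(s, M, |\cP|, \Delta, \delta)$---in particular, since $\Delta$ is specified by the hypotheses rather than being free to balance the two regimes. An auxiliary pigeonholing on the common fiber size $N$ (or equivalently on $|\cP^\Delta|$) to a dyadic value is likely needed so that $\cP^\Delta$ genuinely inherits a $(2-s)$-dimensional structure uniformly. The final exponent $\delta^{35\eps}$ accumulates from the $\delta^{-\eps^2}$ Katz-Tao factors, the $\delta^{-O(\eps)}$ losses in the high-low and $s+t=2$ applications, and the various pigeonholing losses along the way.
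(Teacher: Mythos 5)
Your proposed route fundamentally diverges from the paper's, and the divergence hides a real gap: a single application of the high--low method at the one scale $\Delta$ gives only \emph{quadratic} savings in richness, whereas the paper's key Lemma~\ref{lem:r-rich} produces \emph{cubic} savings $|\T_{a,b}|\lesssim |\cP|^2/(a^3b^2)$, and this cubic exponent is exactly what is needed to reach the intermediate regime $M^{3/2}|\cP|^{1/2}$. Concretely, the paper combines $|\T'|\gtrsim |\cP|M/a$ (restricted incidences) with Lemma~\ref{lem:r-rich}'s $|\T'|\lesssim |\cP|^2/a^3$ to conclude $a^2\lesssim |\cP|/M$ and hence $|\T|\gtrsim M^{3/2}|\cP|^{1/2}$; a quadratic bound $|\T'|\lesssim |\cP|^2/a^2$ would only yield $|\T|\gtrsim M^2$, which is strictly weaker than $M^{3/2}|\cP|^{1/2}$ whenever $|\cP|>M$ (i.e.\ when $t>s$, the main regime). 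Your high-frequency case produces $|\cT|\gtrsim M^2|\cP|\delta^2/\Delta$; plugging in $M=\delta^{-s}$, $|\cP|=\delta^{-t}$, and the critical $\Delta=\delta^{(t-s)/(2-2s)}$, one checks that this exceeds $M^{3/2}|\cP|^{1/2}$ only when $t\ge 2-s$, i.e.\ it fails in precisely the regime where the intermediate bound is the binding one. The cubic gain in Lemma~\ref{lem:r-rich} comes from iterating the high--low decomposition over a cascade of scales $\Delta^{\eps}, \Delta^{2\eps}, \ldots$ with a carefully engineered induction, and that iteration is the proposition's central mechanism; it cannot be replaced by a one-shot application at scale $\Delta$.

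There is also a secondary but genuine error in the low-frequency branch: you invoke a ``coarse-dimension bound $|\cP^\Delta|\lesssim \Delta^{-(2-s)}\delta^{-\eps^2}$,'' but the hypothesis that $\cP^\Delta$ is a $(\Delta,2-s,\delta^{-O(\eps^2)})$-set gives a \emph{lower} bound $|\cP^\Delta|\gtrsim \delta^{O(\eps^2)}\Delta^{-(2-s)}$, not an upper bound, and no upper bound of this shape follows from the spacing assumptions (a fully $2$-dimensional $\cP^\Delta$ also satisfies the $(2-s)$-Frostman condition for any $s>0$). Similarly, the ``Katz--Tao saturation relation'' $|\cP^\Delta|(\Delta/\delta)^s\gtrsim\delta^{\eps^2}|\cP|$ you list is again a lower bound on $|\cP^\Delta|$ and therefore cannot be substituted to obtain a lower bound on $|\cT|$ via a term with $|\cP^\Delta|$ in the denominator. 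So neither branch of your case analysis closes. The correct strategy, as in the paper, is to reduce to Lemma~\ref{lem:r-rich} (after the pigeonholing/refinement steps in Proposition~\ref{prop: refinement} and Proposition~\ref{prop:s+t=2} that validate the hypotheses of the lemma), and then use the cubic richness bound together with the restricted incidence count.
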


For future reference, we repackage it into a special case of Theorem \ref{Thm: main}. The corollary follows from Proposition \ref{prop: key} because we can just take $\Delta = (|\cP|_\delta 
 \delta^s)^{-1/(2s-2)} \in [\delta, 1]$.
\begin{cor}\label{cor: key}
    For every $\eps > 0$ and $\eta = \frac{\eps^2}{1225}$, the following holds for any $0< \delta < \delta_0(s, t, \epsilon)$. Let $(\cP, \cT)$ be a $(\delta, s, \delta^{-\eta}, M)$-nice configuration with $s\in (0, 1]$, and $|\cP| \sim \delta^{-t}$, $t \in [s, 2-s]$.
    Suppose $\cP$ satisfies the stronger spacing condition
    \begin{equation*}
        |\cP \cap Q|_\delta \lesim \delta^{-\eta} \cdot \max(\rho^{2-s} |\cP|_\delta, (\rho/\delta)^s) \text{ for any } Q \in \cD_\rho (\cP), \delta \le \rho \le 1.
    \end{equation*}
   Then
    \[
    |\cT|_{\delta} \gtrsim_\eps \delta^{-\frac{s+t}{2}+\eps} M.
    \]
\end{cor}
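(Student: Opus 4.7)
The plan is to deduce Corollary~\ref{cor: key} from Proposition~\ref{prop: key} by selecting $\Delta$ to be precisely the crossover scale at which the two bounds inside the maximum in the spacing hypothesis switch dominance. Solving $\rho^{2-s}|\cP|_\delta = (\rho/\delta)^s$ using $|\cP|_\delta \sim \delta^{-t}$ gives
\[
    \Delta := \delta^{(t-s)/(2-2s)}
\]
for $s < 1$, and $s \le t \le 2-s$ immediately yields $\delta \le \Delta \le 1$. The degenerate case $s = 1$ forces $t = 1$, so that $s+t = 2$ and one may simply take $\Delta = \delta$; both arguments below still go through.

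With this choice, the hypothesis $|\cP \cap Q|_\delta \lesim \delta^{-\eta}\max(\rho^{2-s}|\cP|_\delta, (\rho/\delta)^s)$ splits into exactly the two conditions required by Proposition~\ref{prop: key}: for $\rho \in (\Delta, 1)$ the first term in the max dominates, giving
\[
    |\cP \cap B(x,\rho)|_\delta \lesim \delta^{-\eta} \rho^{2-s} |\cP|_\delta,
\]
while for $\rho \in (\delta, \Delta)$ the second term dominates, giving $|\cP \cap B(x,\rho)|_\delta \lesim \delta^{-\eta} (\rho/\delta)^s$. Setting $\eps' := \eps/35$, so that $(\eps')^2 = \eps^2/1225 = \eta$, we invoke Proposition~\ref{prop: key} with parameter $\eps'$ to obtain
\[
    |\cT|_\delta \gtrsim_\eps \delta^{35\eps'} \min\bigl(M|\cP|_\delta,\; M^{3/2}|\cP|_\delta^{1/2},\; \delta^{-1}M\bigr) = \delta^{\eps} \cdot \min\bigl(\cdots\bigr).
\]

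It then suffices to check that each of the three terms in the minimum dominates $\delta^{-(s+t)/2} M$. The first equals $M\delta^{-t}$, which beats $\delta^{-(s+t)/2}M$ because $t \ge s$. The third equals $\delta^{-1}M$, which beats $\delta^{-(s+t)/2}M$ because $s+t \le 2$. The middle term uses the lower bound $M \gtrsim \delta^{\eta}\delta^{-s}$, which is automatic from $\cT(p)$ being a $(\delta,s,\delta^{-\eta})$-set, giving
\[
    M^{3/2}|\cP|_\delta^{1/2} \gtrsim \delta^{\eta/2}\delta^{-(s+t)/2}M.
\]
The $\delta^{\eta/2}$ factor is negligible compared to the $\delta^\eps$ gain and can be absorbed into a marginal decrease of $\eps'$ (or of $\eps$) at the outset.

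There is no substantive obstacle here: granted Proposition~\ref{prop: key}, the corollary reduces to a single well-chosen value of $\Delta$ together with routine exponent bookkeeping. The only point requiring care is that $\Delta$ must be tuned \emph{precisely} to the crossover, so that on each side exactly one of the two terms in the max dominates and the hypothesis cleanly matches one of the two forms required by Proposition~\ref{prop: key}.
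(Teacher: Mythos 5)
Your proposal is correct and takes essentially the same approach as the paper, which gives a one-line proof: ``just take $\Delta$ at the crossover scale.'' Your formula $\Delta = \delta^{(t-s)/(2-2s)}$ for the crossover and your exponent bookkeeping (including absorbing the harmless $\delta^{\eta/2}$ slack from $M \gtrsim \delta^{\eta-s}$ by a marginal shrinkage of $\eps'$) are all fine; incidentally, the paper's printed formula $\Delta = (|\cP|_\delta \delta^s)^{-1/(2s-2)}$ appears to carry a sign typo, since it yields $\Delta > 1$ when $t > s$, whereas $(|\cP|_\delta \delta^s)^{+1/(2s-2)}$ agrees with your $\delta^{(t-s)/(2-2s)}$.
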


The key lemma to prove Proposition \ref{prop: key} is the following generalization of \cite[Theorem 1.1]{guth2019incidence}. For technical reasons, we find it convenient to work with multi-sets of dyadic $\delta$-cubes (with possible repetitions). %In Lemma~\ref{lem:r-rich} and its proof, if $\cP$ is a set of $\delta$--cubes, not necessarily disjoint (with possible repetitions) and $Q\subset \mathbb{R}^2$ is a set, then $|\cP\cap Q|$ means the number of $\delta$--cubes in $\cP$ contained in  $Q$.

 For any dyadic $\delta$-tube $T$ and $b\in \mathbb{N}$,  let $N_{w, b} (T)$ be the number of $Q \in \cD_w (\cP)$ (counted with multiplicity) such that $|4T \cap Q \cap \cP| \ge b$. 
\begin{lemma}\label{lem:r-rich}
    Let $\delta < \frac{\Delta}{32} \in 2^{-\mathbb{N}}$, $\eps \in \frac{1}{\N}$, and let $\cP$ be a multi-set of dyadic $\delta$-cubes such that for all $1 \le k \le \eps^{-1}$,  each $Q \in \cD_{\Delta^{k\eps}} (\cP)$ contains about the same number of cubes in $\cP$ (with multiplicity), and for any $Q_1\neq  Q_2 \in \cD_\Delta (\cP)$, $\text{dist} (Q_1, Q_2)\geq \Delta$.
    For $a \ge 2$ and $ab > \delta^{1-2\eps} |\cP|$, let  $\T_{a,b}$ be a set of  distinct  dyadic $\delta$-tubes satisfying 
    \begin{enumerate}
        \item \label{it: good} $|6T^w \cap \cD_w (\cP)| \le \Delta^{-\eps} \cdot w |\cD_w (\cP)|$ for all $w \in \{ \Delta^\eps, \Delta^{2\eps}, \cdots, \Delta \}$,
        \item $N_{\Delta, b} (T) \ge a$.
    \end{enumerate}
    Then 
    \begin{equation}
        |\T_{a,b}| \lesim_{\eps} \frac{|\cP|^2}{a^3 b^2} \delta^{-5\eps}.
    \end{equation}
\end{lemma}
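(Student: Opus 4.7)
The plan is to exploit the suggestive factorization $|\cP|^2/(a^3 b^2) = (N_\Delta^2/a^3) \cdot (b_\Delta^2/b^2)$, where $N_\Delta := |\cD_\Delta(\cP)|$ and $b_\Delta := |\cP|/N_\Delta$ is the (nearly) uniform multi-set mass per $\Delta$-cube. This decomposition motivates a two-scale strategy: a Szemer\'edi--Trotter bound at the coarse scale $\Delta$ (contributing $N_\Delta^2/a^3$), combined with multi-set pair counting within each $\Delta$-cube (contributing $b_\Delta^2/b^2$).

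First, I would prove a Szemer\'edi--Trotter-type estimate at scale $\Delta$: since $\cD_\Delta(\cP)$ is $\Delta$-separated by hypothesis, the number of $\Delta$-tubes that are $r$-rich in $\cD_\Delta(\cP)$ should be $\lesim N_\Delta^2/r^3 \cdot \delta^{-O(\eps)}$. This should follow from an iterated application of Proposition \ref{prop:high low} across the uniformity scales $\Delta^{k\eps}$ for $k = 1,\dots,1/\eps$, where the uniformity gives $\sum_{q \in \cD_w(\cP)} m(q)^2 = |\cP|^2/|\cD_w(\cP)|$ at each scale $w = \Delta^{k\eps}$, and condition (1) controls the low-frequency terms. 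Second, within any $\Delta$-tube $\bT$ and any cube $Q$, a standard multi-set pair-counting argument bounds the number of $\delta$-tubes $T \subset \bT$ with $|4T \cap Q \cap \cP| \ge b$ by $\lesim b_\Delta^2/b^2$. Third, writing $\cQ_\bT := \{Q \in \cD_\Delta(\cP) : Q \subset 6\bT\}$, a double-counting argument will give
\[a|\T_{a,b} \cap \bT| \le \sum_{Q \in \cQ_\bT} \#\{T \in \T_{a,b} \cap \bT : T \text{ is $b$-rich in } Q\} \lesim |\cQ_\bT| \cdot b_\Delta^2/b^2,\]
so $|\T_{a,b} \cap \bT| \lesim |\cQ_\bT| \cdot b_\Delta^2/(ab^2)$. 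Dyadic pigeonholing on $|\cQ_\bT|$, applying the Szemer\'edi--Trotter estimate for each dyadic class $|\cQ_\bT| \sim a^* \ge a$, and summing
\[\sum_{a^*} a^* \cdot \frac{b_\Delta^2}{ab^2} \cdot \frac{N_\Delta^2}{(a^*)^3} = \frac{N_\Delta^2 b_\Delta^2}{ab^2} \sum_{a^* \ge a} \frac{1}{(a^*)^2} \lesim \frac{N_\Delta^2 b_\Delta^2}{a^3 b^2}\]
then yields $|\T_{a,b}| \lesim |\cP|^2/(a^3 b^2) \cdot \delta^{-O(\eps)}$.

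The hardest part will be establishing the Szemer\'edi--Trotter bound at scale $\Delta$ with only $\delta^{-O(\eps)}$ error, using the multi-scale uniformity of $\cP$ rather than a naive assumption of $\Delta$-separation in $[0,1]^2$. This will require a careful iterated application of Proposition \ref{prop:high low}, tracking the $\sum m(q)^2$ factors at each uniformity scale and ensuring the accumulated low-frequency errors are absorbed. The hypothesis $ab > \delta^{1-2\eps}|\cP|$ is expected to enter precisely here: it guarantees that the final low-frequency error $\delta^{1-\eps}|\cP||\T_{a,b}|$ (coming from condition (1) at scale $\Delta$, which gives $|6T^\Delta \cap \cD_\Delta(\cP)| \le \Delta^{1-\eps} N_\Delta$) is dominated by the incidence lower bound $I(\cP, \T_{a,b}) \gesim ab|\T_{a,b}|$, leaving room for the $\delta^{-5\eps}$ slack. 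An edge case to treat separately is the regime $a \gesim N_\Delta^{1/2}$, where the additive term $N_\Delta/r$ in S-T becomes dominant; there I would supplement with the direct pair-counting bound $|\T_{a,b}| \lesim |\cP|^2/(a^2 b^2 \Delta)$ obtained by counting pairs of $\cP$-points in distinct $\Delta$-separated cubes that share a common $\delta$-tube.
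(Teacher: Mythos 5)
There is a genuine gap in the within--$\Delta$-cube pair-counting step, which is the key new ingredient your decomposition relies on. You claim that for a fixed $\Delta$-tube $\bT$ and $\Delta$-cube $Q$, the number of $\delta$-tubes $T \subset \bT$ with $|4T \cap Q \cap \cP| \ge b$ is $\lesim b_\Delta^2/b^2$. This is false. For two $\delta$-cubes $p_1, p_2$ at distance $r$, the number of distinct dyadic $\delta$-tubes contained in a fixed $\Delta$-tube $\bT$ that pass through both is $\sim \min(1/r, \Delta/\delta)$, which is $\gg 1$ for small $r$. In the multi-set setting the lemma explicitly allows, take $\cP \cap Q$ to be a single $\delta$-cube with multiplicity $m \approx b_\Delta \approx b$: then \emph{every} one of the $\sim \Delta/\delta$ many $\delta$-tubes of $\bT$ through that cube is $b$-rich, a factor $\Delta/\delta$ larger than your claimed $b_\Delta^2 / b^2 \approx 1$. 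Even for a non-concentrated set of distinct $\delta$-cubes spread uniformly in $Q$, one computes $\sum_{T \subset \bT} |4T \cap Q \cap \cP|^2 \approx \sum_{p_1,p_2} \min(1/d(p_1,p_2), \Delta/\delta) \approx b_\Delta^2/\Delta$, so the bound is off by at least $\Delta^{-1}$. Since this loss propagates directly into your final product $N_\Delta^2/a^3 \cdot b_\Delta^2/b^2$, the argument does not yield the stated exponent.

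The paper avoids this entirely by never counting individual $\delta$-tubes within a coarser tube. Instead the proof inducts on $\delta$. At each step it selects a carefully tuned intermediate scale $w = \Delta^{k\eps}$ (the largest $k$ with $a > 2\Delta^{-\eps} w |\cD_w(\cP)|$, which simultaneously gives $\cP^Q$ a $(\Delta/w, 1, \Delta^{-\eps})$-set structure for $Q \in \cD_w(\cP)$), replaces the $\delta$-tubes inside each $w$-cube by \emph{essentially distinct} $\delta \times w$-tubelets with a multiplicity weight $m(u)$ (merging the overcounted parallel $\delta$-tubes into one object), and then applies the high-low decomposition of Proposition~\ref{prop:high low} once, with a single small parameter $S = \Delta^{-\eps/100}$. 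The high-frequency term is closed by the $L^2$-energy bound of Lemma~\ref{lem:r-rich weaker}, and the low-frequency term is closed by the inductive hypothesis applied to the thickened tubes $T^{2S\delta}$. This single-pass-plus-induction architecture is essential: your proposed ``iterated application of Proposition~\ref{prop:high low} across the uniformity scales'' would accumulate a $\Delta^{-O(\eps)}$ loss per scale, giving $\Delta^{-O(1)}$ after $\eps^{-1}$ iterations, which is fatal. More fundamentally, the exponent-3 Szemer\'edi--Trotter estimate at scale $\Delta$ you want to prove for $\cD_\Delta(\cP)$ under only a $(\Delta, 1, K)$-spacing hypothesis \emph{is} Lemma~\ref{lem:r-rich} at the coarser scale, so obtaining it non-inductively is circular; the $L^2$-energy argument alone only reaches $N_\Delta^2/r^2$. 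Finally, your proposed edge-case bound $|\T_{a,b}| \lesim |\cP|^2/(a^2 b^2 \Delta)$ for $a \gesim N_\Delta^{1/2}$ does not dominate $|\cP|^2/(a^3 b^2)$ unless $a \lesim \Delta$, so it does not close that regime either; the paper's actual base cases are $a \le 2\Delta^{-2\eps}$ (small $a$, handled by Lemma~\ref{lem:r-rich weaker}) and $\delta > \Delta^{1+\eps}$ (where $\T_{a,b} = \emptyset$ using $ab > \delta^{1-2\eps}|\cP|$).
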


% \begin{lemma}\label{lem:r-rich}
%      Let $\delta < \Delta < 1$, $\eps \in \frac{1}{\N}$, and let $\cP$ be a set of dyadic $\delta$-cubes (with possible repetitions) such that for all $1 \le k \le \eps^{-1}$, we have that each $Q \in \cD_{\Delta^{k\eps}} (\cP)$ contains about  the same number of cubes in $\cP$, and that no two cells of $\cD_\Delta (\cP)$ are adjacent. Call a $\delta$-tube $T$ \textbf{good} if $|T^w \cap \cD_w (\cP)| \le \Delta^{-\eps} \cdot w |\cD_w (\cP)|$ for all $w \in \{ \Delta^\eps, \Delta^{2\eps}, \cdots, \Delta \}$. Let $N_{w, b} (T)$ be the number of $Q \in \cD_w (\cP)$ such that $|T \cap Q \cap \cP| \ge b$. Let $\T_{a,b}$ be a set of essentially distinct good $\delta$-tubes with $N_{\Delta, b} (T) \ge a$. If $a \ge 2$ and $ab > \delta^{1-2\eps} |\cP|$, then $|\T_{a,b}| \lesim_{\eps} \frac{|\cP|^2}{a^3 b^2} \delta^{-5\eps}$.
% \end{lemma}

\begin{remark}
    If we superimpose $k$ copies of $\cP$ and increase $b$ by a factor of $k$, then Lemma~\ref{lem:r-rich} does not change.
\end{remark}

We give some clarifying examples of this lemma.
\begin{remark}
   Lemma~\ref{lem:r-rich} holds equally well for general $\delta$-balls in place of dyadic $\delta$-cubes. To compare Lemma~\ref{lem:r-rich} with the well-spaced case proved in \cite[Theorem 3]{fu2022incidence} (the dual of \cite[Theorem 1.1]{guth2019incidence}), 
    if $\cP$ is a set of well-spaced $\delta$-balls, i.e. $\cD_\Delta (\cP)$ is the set of all $\Delta$-separated dyadic $\Delta$-squares in $[0, 1]^2$ and each $Q \in \cD_{\Delta} (\cP)$ contains exactly $1$ $\delta$-ball in $\cP$, then the set of $r$-rich tubes is contained in $\T_{r,1}$. Furthermore, for any $\eps > 0$, the condition $N_w (T) \le C\Delta^{-\eps} \cdot w|\cD_w (\cP)|$ is true: the LHS is $\le w^{-1}$ while $|\cD_w (\cP)| = w^{-2}$. Hence, we get $|\T_r| \lesim \frac{|\cP|^2}{r^3} \delta^{-C \eps}$, which agrees with \cite[Theorem 3]{fu2022incidence} (the dual of \cite[Theorem 1.1]{guth2019incidence}) in the case $X = W$. (Curiously, Lemma \ref{lem:r-rich} doesn't seem to imply all of \cite[Theorem 3]{fu2022incidence}.)
\end{remark}

Before proving Lemma~\ref{lem:r-rich}, we see how it implies Proposition~\ref{prop: key}. A key ingredient is the following lemma, which  uses the spacing condition \eqref{eqn:spacing condition 2} for $\cP$.

\begin{lemma}\label{lem:elementary incidence}
    Fix $\delta < \Delta \in 2^{-\mathbb{N}}$. Let $\cP$ be a set of distinct dyadic $\delta$-cubes in $[0, \Delta]^2$ such that $|\cP \cap B(x, w)| \le K \cdot (\frac{w}{\delta})^s$ for all $\delta \le w \le \Delta$ and $w$-balls $B(x, w)$. For each $p \in \cP$, let $\T(p)$ be a $(\delta, s, K)$-set of dyadic tubes such that $|\T(p)| \sim  M$. Let $\T = \cup \T(p)$. Then there exists a subset $\cP' \subset \cP$ with $|\cP'| \ge \frac{1}{2} |\cP|$ and for each $p \in \cP$, a subset $\T'(p) \subset \T(p)$ with $|\T'(p)| \ge \frac{1}{2} |\T(p)|$ such that for each $T \in \T'$, we have $|\{ p \in \cP' : T \in \T'(p) \}| \le C_1 K^2 |\log \delta|$, for some universal constant $C_1 > 0$.
\end{lemma}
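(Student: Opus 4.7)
The plan is to control the tube popularity $n(T) := |\{p \in \cP : T \in \T(p)\}|$ by bounding $\sum_T n(T)^2$ and then deleting the tubes whose popularity exceeds a threshold of order $K^2|\log\delta|$. By Markov's inequality, once I establish $\sum_T n(T)^2 \lesim K^2 M |\cP| |\log\delta|$, the total incidence mass on tubes with $n(T) > C_1 K^2 |\log\delta|$ is at most $O(M|\cP|/C_1)$, a small fraction of the total incidence count $\sum_p |\T(p)| \sim M|\cP|$ once $C_1$ is chosen large. A standard averaging argument then produces both $\cP'$ and the subsets $\T'(p)$.

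The key step is to bound $\sum_T n(T)^2 = \sum_{p_1,p_2 \in \cP} |\T(p_1) \cap \T(p_2)|$. The diagonal contributes $\sim M|\cP|$; for $p_1 \ne p_2$, any tube through both $\delta$-cubes has slope pinned down up to an error $\lesim \delta/|p_1-p_2|$ (the restriction to slopes in $[-1,1]$ ensures this is comparable to $\delta/|x_1-x_2|$ whenever the intersection is nonempty, and otherwise the intersection is empty). Identifying $\T(p_1)$ with a $(\delta,s,O(K))$-set of $\sim M$ slopes in $[-1,1]$ gives
\[
|\T(p_1) \cap \T(p_2)| \lesim K \left( \frac{\delta}{|p_1 - p_2|} \right)^{s} M.
\]
Summing $p_2$ over dyadic annuli $\{p : |p - p_1| \sim w\}$ and using the Katz-Tao hypothesis $|\cP \cap B(p_1,w)| \le K(w/\delta)^s$, each dyadic scale $w \in [\delta,\Delta]$ contributes $\lesim K^2 M$. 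Summing over the $\lesim |\log\delta|$ relevant scales gives $\sum_{p_2 \ne p_1} |\T(p_1) \cap \T(p_2)| \lesim K^2 M |\log\delta|$, and hence $\sum_T n(T)^2 \lesim K^2 M |\cP| |\log\delta|$.

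To conclude, set $\Tbad := \{T : n(T) > C_1 K^2 |\log\delta|\}$ for a sufficiently large absolute constant $C_1$, and $\T'(p) := \T(p) \setminus \Tbad$ for every $p \in \cP$. By Markov, the lost incidence mass $\sum_p (|\T(p)| - |\T'(p)|)$ is $\lesim M|\cP|/C_1$. A short averaging argument using $|\T(p)| \sim M$ then shows that $\cP' := \{p \in \cP : |\T'(p)| \ge |\T(p)|/2\}$ has cardinality at least $|\cP|/2$ once $C_1$ is large enough. For $p \notin \cP'$ one simply redefines $\T'(p)$ to be any subset of $\T(p)$ of size $\ge |\T(p)|/2$; this is harmless because for any $p \in \cP'$ and $T \in \T'(p)$ we have $T \notin \Tbad$, hence $|\{p \in \cP' : T \in \T'(p)\}| \le n(T) \le C_1 K^2 |\log\delta|$.

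The only subtle point I anticipate is verifying that $\T(p)$, viewed as a set of slopes in $[-1,1]$, inherits a $(\delta,s,O(K))$-set property from the $(\delta,s,K)$-set hypothesis in the dual plane (this follows by slicing the dual tube through $p$). Once that is in place, the cancellation between the factor $(\delta/w)^s$ coming from the tube-direction constraint and the factor $(w/\delta)^s$ coming from the Katz-Tao spacing of $\cP$ is exactly what produces the scale-invariant contribution and the single logarithm in the popularity threshold.
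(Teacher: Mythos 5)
Your proof is correct and uses the same key estimate as the paper: the $L^2$ energy bound $\sum_{p_1,p_2}|\T(p_1)\cap\T(p_2)| \lesssim K^2 M|\cP||\log\delta|$, obtained by pairing the slope constraint $|\T(p_1)\cap\T(p_2)|\lesssim KM(\delta/w)^s$ for $|p_1-p_2|\sim w$ against the Katz--Tao bound $|\cP\cap B(p_1,w)|\le K(w/\delta)^s$ so each dyadic annulus contributes $\lesssim K^2 M$. The only difference is organizational: you bound the second moment $\sum_T n(T)^2$ directly and apply a second-moment Markov to control $\sum_{T\in\Tbad}n(T)$ before averaging, whereas the paper runs the same energy estimate inside a proof by contradiction (first-moment Markov bounds $|\Tbad|$ from above; if the conclusion failed, Cauchy--Schwarz on the energy of the surviving incidences would force $|\Tbad|$ to be too large). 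These are two standard packagings of the identical argument, and your handling of the edge case $p\notin\cP'$ is fine since $\{p\in\cP':T\in\T'(p)\}$ is automatically empty when $T\in\Tbad$.
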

Lemma~\ref{lem:elementary incidence} also reproves the $t\leq s$ case of Furstenberg sets conjecture, i.e. the work of \cite{lutz2020bounding} and \cite{hera2019hausdorff}.

\begin{proof}
    Let $\Tbad \subset \T$ be the set of tubes with $|\{ p \in \cP : T \in \T(p) \}| \ge C_1 K^2|\log \delta|$. By Markov's inequality, we get $|\Tbad| \lesim \frac{M|P|}{C_1 K^2 |\log \delta|}$. This is the only property of $\Tbad$ that will be used in the proof.

    Suppose the lemma were false; then there exists a subset $|\cP'| \ge \frac{1}{2} |\cP|$ and a subset $\T'(p) \subset \T(p) \cap \Tbad$ for each $p \in \cP'$ with $|\T'(p)| \ge \frac{1}{2} |\T(p)|$. Note that $\T'(p)$ is still a $(\delta, s, 2K)$-set. We now find a lower bound for $\T' = \cup_{p \in \cP} \T'(p)$ via an elementary energy argument. Let $J$ be the number of triples $(p_1, p_2, T) \in (\cP')^2 \times \cT'$ such that $T \in \T'(p_1) \cap \T'(p_2)$. Then
    \begin{equation*}
        J \lesim \sum_{p \in \cP'} \sum_{\substack{w \text{ dyadic},\\\delta < w < \Delta}} (KM(\delta/w)^s) \cdot K \cdot (w/\delta)^s + |\cP| M \le M(K^2 |\log \delta| + 1) |\cP|.
    \end{equation*}
    Here, there are $\leq K \cdot (w/\delta)^s$ many $\delta$-cubes $q \in \cP'$ at distance $\sim w$ from $p$, and there are $\lesim KM(\delta/w)^s$ many $\delta$-tubes in $\cT(p)\cap \cT(q)$ with $d(p, q) \sim w$.
    
    Thus, by Cauchy-Schwarz, \[\frac{(M |\cP|)^2}{|\T'|} \lesim MK^2 |\log \delta|\cdot  |\cP|,\]
    so $|\T'| \gesim (K^2 |\log \delta|)^{-1} M |\cP|$. Since $\T' \subset \Tbad$, this contradicts our upper bound $|\Tbad| \lesim \frac{M|P|}{C_1 K^2 |\log \delta|}$ if $C_1$ is chosen sufficiently large.
\end{proof}

\begin{proof}[Proof of Proposition~\ref{prop: key} given Lemma~\ref{lem:r-rich}]
    Pick $\delta_0 = \delta_0 (\eps)$, to be chosen sufficiently small later. First, if $\delta > \delta_0$, then the trivial bound $|\T|_\delta \ge M$ would suffice when the implicit constant in $\gtrapprox_{\eps}$ is sufficiently large depending on $\delta_0(\eps)$. Hence, assume $\delta < \delta_0$.

    First, if $\delta^{8\eps} < \Delta$, then $\cP$ is a $(\delta, s, \delta^{-17\eps})$ Katz-Tao set, since for $\rho < \Delta$ we have \eqref{eqn:spacing condition 2}, and for $\rho \ge \Delta$, we have
    \begin{equation*}
        |\cP \cap B(x, \rho)| \lesim \left( \frac{\rho}{\Delta} \right)^2 \sup_x |\cP \cap B(x, \Delta)| \lesim \Delta^{-2} \cdot \delta^{-\eps^2} \left( \frac{\Delta}{\delta} \right)^s \le \delta^{-17\eps} \cdot \left( \frac{\rho}{\delta} \right)^s.
    \end{equation*}
    Hence, by Lemma \ref{lem:elementary incidence}, there exists a subset $\cP'\subset\cP$ with $|\cP'|\geq \frac{1}{2}|\cP|$ and for each $p\in \cP$ a subset $\cT'(p)\subset \cT(p)$ with $|\cT'(p)|\geq \frac{1}{2}|\cT(p)|\sim M$ such that for each $T\in \cT'$, 
    \[
    |\{p\in \cP': T\in \cT'(p)\}|\leq C_1 \delta^{-34\eps} |\log \delta|.
    \]
    Therefore, 
    \begin{equation*}
        |\T| \cdot \sup_{T \in \T'} |\{ p \in \cP' : T \in \T'(p) \}| \ge |\cP'| \sup_{p \in \cP'} |\T'(p)| \gesim |\cP| M \implies |\T| \gtrsim_{\eps} |\cP| M \delta^{35\eps}.
    \end{equation*}
    Thus, assume $\delta^{8\eps} > \Delta$. In particular, $\Delta < \delta_0^{8\eps}$.

    Next, if $\delta > \frac{\Delta}{100}$, then $\cP$ is a $(\delta, 2-s, \delta^{-\eps})$-set for analogous reasons, and we may conclude $|\T| \gesim_\eps \delta^{-1+4\eps} M$ by  Proposition~\ref{prop:s+t=2}  (see also \cite[Theorem 5.28]{orponen2023projections} and  \cite[Theorem 5.2]{fu2021incidence}). Thus, assume $\delta < \frac{\Delta}{100}$.

    The main idea is to apply Lemma \ref{lem:r-rich} to the configuration $(\cP, \cT)$ to get a strong incidence bound. The impatient reader can skip to the end of the proof for the calculation.
    %Since $b$ is small, we get a Szemeredi-Trotter type bound for the number of $r$-rich tubes, which leads to the desired lower bound for Furstenberg sets.

    But to satisfy the conditions of Lemma \ref{lem:r-rich}, we need to suitably refine $(\cP, \cT)$. 
    First, apply Lemma~\ref{lem: pigeonhole} to $\cP$ such that the resulting set, still denote as $\cP$ is $\{\Delta_j\}_{j=1}^{\eps^{-1}}$-uniform where $\Delta_j= \Delta^{\eps j}$. 
    Then apply Proposition~\ref{prop: refinement} to $(\cP, \cT)$ with parameter $\Delta$ to obtain a refinement $(\cP_1, \cT_1)$ and the corresponding $(\Delta, s, C_{\Delta}, M_{\Delta})$-nice configuration $(\cP_1^{\Delta}, \cT_1^{\Delta})$ with $C_{\Delta}\approx_{\Delta} \delta^{-\eps^2}$. By Proposition~\ref{prop:s+t=2} with $n=4/\eps$, and $C_0:=\delta^{-\eps^2}< \Delta^{-\eps/8}$, there exists a refinement $(\cQ_{\Delta}, \cT_{\Delta})$ of $(\cP_1^{\Delta}, \cT_1^{\Delta})$  such that 
$|6(\bT^w) \cap \cD_w (\cP_1)| \leapp_{\Delta, \eps} \Delta^{-3\eps/4} \cdot w |\cD_w (\cP_1)|$ for all $w \in \{ 1, \Delta^\eps, \cdots, \Delta \}$ and $\bT \in \cT_\Delta$. Now, we define $\cP_2=\cup_{\bp \in \cQ_\Delta} (\cP_1\cap \bp)$ and for each $p\in \cP_2$, let $\cT_2(p)= \{T\in \cT_1(p): T^{\Delta}\in \cT_{\Delta}(\bp)\}$ and we get from \eqref{it: refinement} of Proposition~\ref{prop: refinement} that  \[
    \sum_{p\in \cP_2} |\cT_2(p)| \approx_{\delta} |\cP|_\delta \cdot M.
    \]

    By pigeonholing, we can find a refinement $(\cP_3, \cT_3)$ of $(\cP_2, \cT_2)$ that is  a configuration refinement of $(\cP, \cT)$ and satisfies $ \cD_{\Delta}(\cT_3(p))\subset  \cT_{\Delta}(p^{\Delta})$ for all $p \in \cP_3$. %\subset \cT_1(p^{\Delta}).$

    By Lemma \ref{lem:elementary incidence} on each $Q \in \cD_\Delta (\cP_{3})$ (and assuming $\eps$ is small enough), we can find a refinement $(\cP_4, \cT_4)$ of $(\cP_3, \cT_3)$ such that  for each $T\in \cT_4$ and each $Q\in \cD_{\Delta}(\cP_4)$, 
    \begin{equation}\label{eqn:small tubes}
        |\{ p\in Q\cap \cP_4: T\in \cT_4(p)\}|\lesssim \delta^{-\eps}
    \end{equation}
       By intersecting $\cP_4$ with $\cD_\Delta^{x,y} ([0,1]^2) := \{ [(2k_1 + x) \Delta, (2k_1 + x+1) \Delta ] \times [(2k_2 + y) \Delta, (2k_2 + y+1) \Delta ] :  k_1, k_2 \in \mathbb{N}\cap [0, \frac{1}{2} \Delta^{-1} ]\}$ for suitable $x, y \in \{ 0, 1 \}$, we can also ensure that any  $\Delta$-cubes  $Q_1\neq Q_2\in \cD_\Delta (\cP_4)$, $\text{dist}(Q_1, Q_2)\geq \Delta$ (while preserving the cardinality of $\cP_4$ up to a factor of $4$).
By refining $\cP_4$ further, we may assume $\cP_4$ is $\{ \Delta_j \}_{j=1}^{\eps^{-1}}$-uniform, and \eqref{eqn:small tubes} will still hold.
For each $T\in \cT_4$, there exists  a dyadic number $a(T) \in [1, \delta^{-1}]$ such that  
\begin{equation}\label{eqn: a}
|\{Q\in \cD_{\Delta}(\cP_4): \exists p\in Q\cap \cP_4, T\in \cT_4(p)\}|\sim a(T).
\end{equation}
By pigeonholing, there exists a dyadic number $b(T) \in [1, \delta^{-1}]$ such that there are $a'(T) \approx_{\delta} a(T)$ many $Q\in \cD_{\Delta}(\cP_4)$ with 
\[ |4T\cap \cP_4\cap Q|\sim  b(T). \]
By pigeonholing again, there exists a pair of dyadic numbers $(a,b)$ such that \[ \cT_4':=\{ T\in \cT_4: a(T)\sim a, b(T)\sim b\}\] preserves the number of incidences
\begin{equation}\label{eqn:preserve incidences}
            |\{ (p, T) \in \cP_4 \times \cT_4' : T \in \T_4 (p) \}| \gtrapprox_{\delta} |\cP|_\delta M.
        \end{equation}

   By \eqref{eqn: a} and \eqref{eqn:small tubes}, we get
   \begin{equation}\label{eq: r}
  a\lessapprox_{\delta}  |\{p\in \cP_4: T\in \cT_4(p)\}|\lesim \delta^{-\eps} a. 
    \end{equation}

    Now we verify the assumptions of Lemma~\ref{lem:r-rich}. 
    %convert dyadic tubes $T \in \cT_4'$ to ordinary tubes $T_\ord$. By Lemma \ref{lem:thicken dyadic tube}, 
   % Since $\frac{\delta}{\Delta} < \frac{1}{100}$, we have $(T_\ord)^w \subset (T^w)_\ord$. 
    We have  $| 6(T^w) \cap \cD_w (\cP_4)| \le \Delta^{-\eps} \cdot w^{1-\eps} |\cD_w (\cP_4)|$ since $|\cD_w (\cP_4)| \geapp |\cD_w (\cP)|$ (by uniformity of $\cP$) and the implicit log factor is dominated by $\Delta^{-\eps/2}$ if $\delta_0 (\eps)$ is chosen sufficiently small (recall $\Delta < \delta_0^{8\eps}$). Now, the conditions of Lemma \ref{lem:r-rich} are satisfied, with $\cP_4$ in the place of $\cP$ and  $\inf_{T \in \T_4'} a'(T) \app_\delta a$ in the place of $a$.

    By Lemma~\ref{lem:r-rich}, 
    either $a\lesssim \delta^{1-2\eps}|\cP|_{\delta}$ or 
    \[
    |\cT_4'|\lesssim  \delta^{-5\eps} \frac{|\cP|_{\delta}^2}{a^3 b^2} \lesssim \delta^{-5\eps} \frac{|\cP|_{\delta}^2}{a^3}.
    \]
    On the other hand, by \eqref{eqn:preserve incidences} and \eqref{eq: r}, 
    \begin{equation}\label{eqn:preserve incidences 2}
        |\cT_4'| \gtrapprox_{\delta}  \delta^{\eps} \frac{|\cP|_{\delta} M}{a}.
    \end{equation}
    Thus in the first case $a\lesssim \delta^{1-2\eps}|\cP|_{\delta}$, we get $|\T_4| \ge |\T_4'| \geapp_\delta M \delta^{-1+3\eps}$. In the second case, we get
    \[
    a^2\lessapprox_{\delta} \frac{|\cP|_{\delta}}{M} \delta^{-6\eps}
    \]
    and by plugging this result into \eqref{eqn:preserve incidences 2}, we get $|\T_4| \gtrapprox_{\delta} |\cP|_{\delta}^{1/2} M^{3/2} \cdot \delta^{3\eps}.$ % $\gtrapprox_{\delta} M (\delta^{-s} |\cP|)^{1/2} \cdot \delta^{5\eps/2}$.
    % Choose a subset $\T_r \subset \T_2$ such that $N_\Delta (t) \sim r$ while preserving $\{ (p, t) \in \cP \times \T_r : t \in \T(p) \} \geapp M |\cP|$. Thus, $|\T_r| \ge \frac{M |\cP|}{r}$. On the other hand, Lemma \ref{lem:r-rich} on $\cP$ and $\T_\ord$ with $a = r, b = 1$ tells us that $|\T_r| \le \frac{|\cP|^2}{r^3}$. Putting the two bounds against each other, we get $r^2 \le \frac{|\cP|}{M} \delta^{-5\eps}$ and so $|\T_r| \ge |\cP|^{1/2} M^{3/2} \ge M \delta^{-(s+t)/2+5\eps/2}$.
\end{proof}

\section{Proof of Lemma \ref{lem:r-rich}}
We restate Lemma \ref{lem:r-rich}, which may have independent interest. 
%In this section, we emphasize  that we work with a set of essentially distinct $\delta$-tubes, so for any $w\in (\delta, 1)$,  $T^w$ denotes the $w$-tube coaxial with $T$. 
Recall if $\cP$ is a multi-set of dyadic $\delta$--cubes and $Q\subset \mathbb{R}^2$ is a set, then $|\cP\cap Q|$ means the number of $\delta$--cubes in $\cP$ contained in $Q$ counted with repetition. For any $\delta$-tube $T$ and $b\in \mathbb{N}$,  let $N_{w, b} (T)$ be the number of $Q \in \cD_w (\cP)$ such that $|4T \cap Q \cap \cP| \ge b$. 
\begin{lemma}\label{lem:r-rich'}
 Let $\delta < \frac{\Delta}{32} \in 2^{-\mathbb{N}}$, $\eps \in \frac{1}{\N}$, and let $\cP$ be a set of dyadic $\delta$-cubes (with possible repetitions) such that for all $1 \le k \le \eps^{-1}$,  each $Q \in \cD_{\Delta^{k\eps}} (\cP)$ contains about the same number of cubes in $\cP$, and for any $Q_1\neq  Q_2 \in \cD_\Delta (\cP)$, $\text{dist} (Q_1, Q_2)\geq \Delta$.
    For $a \ge 2$ and $ab > \delta^{1-2\eps} |\cP|$, let  $\T_{a,b}$ be a set of  distinct  dyadic $\delta$-tubes $T$ satisfying  
    \begin{enumerate}
        \item \label{it:good} $|6(T^\rho) \cap \cD_\rho (\cP)| \le \Delta^{-\eps} \cdot \rho |\cD_\rho (\cP)|$ for all $\rho \in \{ \Delta^\eps, \Delta^{2\eps}, \cdots, \Delta \}$,
        \item \label{it:a} $N_{\Delta, b} (T) \ge a$.
    \end{enumerate}
    Then 
    \begin{equation}
        |\T_{a,b}| \lesim_{\eps} \frac{|\cP|^2}{a^3 b^2} \delta^{-5\eps}.
    \end{equation}
\end{lemma}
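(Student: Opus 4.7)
The plan is to combine a lower bound for the incidence count $I(\cP, \T_{a,b})$ coming from the heavy-cube structure with an upper bound via the high-low method (Proposition~\ref{prop:high low}). First, condition~\eqref{it:a} gives that every $T \in \T_{a,b}$ has at least $a$ heavy $\Delta$-cubes, each contributing at least $b$ cubes of $\cP$ in $4T$, so $|\cP \cap 4T| \ge ab$ and $I(\cP, \T_{a,b}) \ge ab\,|\T_{a,b}|$.

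Next, I would apply Proposition~\ref{prop:high low} with a scale parameter $S$ chosen so that $S\delta \in [\Delta^{\eps}, \Delta]$ lies in the range where condition~\eqref{it:good} applies. This yields
\begin{equation*}
I(\cP, \T_{a,b}) \lesssim_{\eps} (S\delta^{-1}|\T_{a,b}||\cP|)^{1/2} + S^{-1+\eps} I(\cP, \T_{a,b}^{S\delta}).
\end{equation*}
Combining condition~\eqref{it:good} with the uniformity of $\cP$ at scale $S\delta$ and Lemma~\ref{lem:thicken dyadic tube}, we get $|\cP \cap 6T^{S\delta}| \le \Delta^{-\eps}\,(S\delta)\,|\cP|$ for each $T \in \T_{a,b}$. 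Summing and multiplying by $S^{-1+\eps}$, the low-frequency term is bounded by $\delta^{1-2\eps}\,|\T_{a,b}|\,|\cP|$, which is strictly smaller than the lower bound $ab\,|\T_{a,b}|$ by the standing hypothesis $ab > \delta^{1-2\eps}|\cP|$. Consequently the high-frequency term dominates, producing a bound of the shape $|\T_{a,b}| \lesssim S |\cP|/(\delta (ab)^2)$.

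A single application of the high-low method (choosing, say, $S = \Delta^\eps/\delta$) only yields $|\T_{a,b}| \lesssim \Delta^{\eps}|\cP|/(\delta^2 (ab)^2)$, which matches the target $|\cP|^2/(a^3 b^2)\,\delta^{-5\eps}$ only for $a$ in a restricted range. To close the gap in general, I would iterate the high-low method across the dyadic scales $\Delta^\eps, \Delta^{2\eps},\ldots, \Delta$ guaranteed by the uniformity hypothesis. At each stage, the incidence problem is transferred to a weighted incidence problem at the next coarser scale, with weights $w(Q) = |Q \cap \cP|$ reflecting the $\cP$-multiplicity inside each coarse cube, and with $\sum_Q w(Q)^2$ controlled by uniformity of $\cP$ at that scale. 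Each application of Proposition~\ref{prop:high low} costs at most $\delta^{-O(\eps)}$, and the accumulated loss across the $\eps^{-1}$ scales produces the overall $\delta^{-5\eps}$.

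The main obstacle is to carry out the iteration so as to recover the extra factor of $|\cP|/a$ that separates the desired bound from the single-scale estimate. The cubic power $a^3$ cannot come from the $L^2$-type high-frequency estimate alone; instead one must couple the $\delta$-scale high-frequency bound with an auxiliary incidence inequality at scale $\Delta$, exploiting the fact that each $\Delta$-tube in $\T_{a,b}^{\Delta}$ is $a$-rich against $\cD_\Delta(\cP)$. Propagating this $a$-richness through the weighted $\sum_p w(p)^2$ term in the intermediate applications of Proposition~\ref{prop:high low}, while keeping the low-frequency contributions under control via condition~\eqref{it:good} at each scale, is the crux of the argument.
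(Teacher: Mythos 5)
Your diagnosis of the obstacle is correct: applying Proposition~\ref{prop:high low} directly to the unweighted incidence $I(\cP,\T_{a,b})\ge ab|\T_{a,b}|$ gives only $|\T_{a,b}|\lesssim S\delta^{-1}|\cP|/(ab)^2$, which has the wrong power of $a$, and the missing factor $|\cP|/a$ has to come from exploiting the $a$-richness at scale $\Delta$. But the iteration you sketch does not supply the mechanism that actually produces $a^3$, and three concrete pieces are absent.

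First, the paper does not run high-low on $I(\cP,\T_{a,b})$ at all. It decomposes each $T\in\T_{a,b}$ into $\delta\times w$-tubelets $u$ at an intermediate scale $w$ and weights each tubelet by $N_{\Delta,b}(u)$, the number of heavy $\Delta$-cubes $u$ passes through; the lower bound becomes $a|\T_{a,b}|\lesssim\sum_u N_{\Delta,b}(u)\,m(u)$. Your proposed weights $w(Q)=|Q\cap\cP|$ carry no information about which tubes are $a$-rich (by uniformity they are essentially constant over $Q\in\cD_\rho(\cP)$), so no amount of iteration with those weights can reproduce the cubic power. Second, the scale $w$ is not arbitrary: it is pigeonholed so that $a\approx w|\cD_w(\cP)|$ up to $\Delta^{-O(\eps)}$; this relation is exactly what converts the high-frequency term, with $\sum_u N_{\Delta,b}(u)^2$ controlled by a separate $L^2$-energy argument (Lemma~\ref{lem:r-rich weaker}) applied inside each $w$-cube, into the claimed $a^{-3}$. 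Third, and this is where your reasoning actively points the wrong way, the observation that the low-frequency term is always dominated is an artifact of the unweighted formulation; once tubelets are weighted by $N_{\Delta,b}(u)$, the low-frequency term can dominate, and the paper handles it by a genuine induction on $\delta$ over $\delta=\Delta 2^{-n}$, passing to $\{T^{2S\delta}\}$ with re-pigeonholed parameters $a',b'$ and invoking the inductive hypothesis. Your proposal has no such induction and nothing in its place. In short, you name the right obstruction but do not supply the weighted tubelet incidence, the pigeonholed choice of $w$, or the induction on $\delta$, all of which are load-bearing.
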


The proof of Lemma \ref{lem:r-rich'} uses the same high-low method as in \cite{guth2019incidence}, but crucially we streamline the induction on scales mechanism.
% \begin{definition}\label{def: s-dim two scales}
%     Let $\cP$ be a set of not necessarily distinct dyadic  $\delta$-cubes and let $1\geq \Delta_1>\Delta_2\geq \delta$. We say that $\cP$ is a $(\Delta_2,\Delta_1, s, K)$-set if for all $\Delta_2\leq \rho \leq \Delta_1$, and for any $ Q_1\in \cD_{\Delta_1}(\cP)$ and any $\rho$-ball $Q \subset Q_1$, 
%     \[
%     |\cP\cap Q| \leq K \frac{\rho}{\Delta_1} |\cP\cap Q_1|. 
%     \]
% \end{definition}
First, we begin with a weaker version of Lemma \ref{lem:r-rich'}, which is true under a weaker hypothesis (that $\cP$ is a $(\Delta, 1, K)$-set). It will be useful when $a$ is small. In Lemma~\ref{lem:r-rich weaker}, slightly abusing notation, let $N_{\Delta, b} (T)$ be the number of $Q \in \cD_\Delta (\cP)$ such that $|6T \cap Q \cap \cP| \ge b$. 
\begin{lemma}\label{lem:r-rich weaker}
    Let $\delta < \frac{\Delta}{32}\in 2^{-\mathbb{N}}$. Suppose $\cP \subset [0,1]^2$ is a $(\Delta, 1, K)$-set of not necessarily distinct dyadic $\delta$-cubes: i.e. for all $\Delta \le r \le 1$, we have $|\cP \cap Q| \le K r |\cP|$ for $Q \in \cD_r (\cP)$. Suppose for any $Q_1\neq Q_2\in \cD_\Delta (\cP)$, $\text{dist}(Q_1, Q_2)\geq \Delta$. Then
    \begin{equation*}
        \sum_{T\in \cT^{\delta}: N_{\Delta,b} (T) \ge 2} N_{\Delta,b} (T)^2 \lesim  K |\log \Delta| \cdot \frac{|\cP|^2}{b^2}.
    \end{equation*}
    If $\T_{a,b} \subset \T^\delta$ is a set of essentially distinct dyadic $\delta$-tubes with $N_{\Delta,b} (T) \ge a$ and $a \ge 2$, then \[|\T_{a,b}| \lesim K |\log \Delta| \cdot \frac{|\cP|^2}{(ab)^2}.\]
\end{lemma}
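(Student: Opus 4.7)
The approach is direct double counting. Since $N_{\Delta,b}(T) \ge 2$ implies $N_{\Delta,b}(T)^2 \le 4\binom{N_{\Delta,b}(T)}{2}$, it suffices to bound the number of ordered triples $(T, Q_1, Q_2)$ with $T \in \cT^\delta$, $Q_1 \ne Q_2 \in \cD_\Delta(\cP)$, and both $Q_i$ $b$-rich for $T$. By hypothesis distinct cubes of $\cD_\Delta(\cP)$ lie at distance $\ge \Delta$. For each ordered pair $(Q_1,Q_2)$ at distance $r \ge \Delta$, the number of tubes $b$-rich in both is controlled by
\[
\frac{1}{b^2}\sum_{T \in \cT^\delta} |6T \cap Q_1 \cap \cP|\cdot|6T \cap Q_2 \cap \cP| = \frac{1}{b^2}\sum_{\substack{p_1 \in \cP \cap Q_1\\ p_2 \in \cP \cap Q_2}} \#\{T \in \cT^\delta : p_1 \cup p_2 \subset 6T\}.
\]

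The key ingredient is the elementary fact that two $\delta$-cubes at distance $\ge r \ge \delta$ are contained in at most $O(1/r)$ dyadic $\delta$-tubes of $\cT^\delta$. In the parameter space $[-1,1]\times\R$, the constraint $p_i \subset 6T$ for $T = \mathbf{D}(q)$ forces $q$ into an $O(\delta)$-neighborhood of the dual line $L_i$ of the center of $p_i$. The intersection of the two strips is either empty (when the line through $p_1,p_2$ has slope outside $[-1,1]$, i.e.\ $|y_1-y_2| > |x_1-x_2|+O(\delta)$) or a parallelogram of area $\lesim \delta^2/|x_1-x_2| \lesim \delta^2/r$, which meets at most $O(1/r)$ dyadic $\delta$-cubes. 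Hence for each pair $(Q_1,Q_2)$ at distance $\sim r$ the inner count above is $\lesim |\cP \cap Q_1|\cdot|\cP \cap Q_2|/(rb^2)$.

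Summing over $Q_2$ in the dyadic shell $d(Q_1,Q_2) \sim r$, the $(\Delta,1,K)$-set hypothesis gives $\sum_{Q_2} |\cP \cap Q_2| \lesim Kr|\cP|$ (such $Q_2$ fit in $O(1)$ cubes of $\cD_r$), and then summing over $Q_1$ yields $\sum_{Q_1} |\cP \cap Q_1|\cdot Kr|\cP| \le Kr|\cP|^2$. Dividing by $rb^2$ and summing over the $O(|\log\Delta|)$ dyadic scales $r \in [\Delta,1]$ gives
\[
\sum_{T : N_{\Delta,b}(T) \ge 2} N_{\Delta,b}(T)^2 \lesim K|\log\Delta|\,\frac{|\cP|^2}{b^2}.
\]
The second assertion follows immediately from $a^2|\T_{a,b}| \le \sum_{T \in \T_{a,b}} N_{\Delta,b}(T)^2$, since $a \ge 2$ gives $\T_{a,b} \subseteq \{T : N_{\Delta,b}(T) \ge 2\}$. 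The only nontrivial ingredient is the $O(1/r)$ tube-counting lemma; the multi-set nature of $\cP$ is harmless since all cardinalities are read with multiplicity.
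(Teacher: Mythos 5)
Your proof is correct and follows essentially the same $L^2$-energy / double-counting argument as the paper: you count triples and use the $\Delta$-separation of cubes in $\cD_\Delta(\cP)$, the $O(1/r)$ tube-count for a pair of $\delta$-cubes at distance $r$, and the $(\Delta,1,K)$-set hypothesis, summing over dyadic shells. The only cosmetic difference is that you organize the count as ordered triples $(T,Q_1,Q_2)$ of tubes and rich $\Delta$-cubes and then expand into point pairs, whereas the paper counts point triples $(p_1,p_2,T)$ directly, carrying the factor $b^2$ through the lower bound on $J$; the two bookkeeping choices are equivalent.
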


\begin{proof}
    This is a standard $L^2$-energy argument (similar to Lemma \ref{lem:elementary incidence}). Consider $J$, the number of triples $(p_1, p_2, T) \in \cP^2 \times \T^\delta$ such that $p_1, p_2$ belong to different $\Delta$-cubes of $\cD_\Delta (\cP)$ and $p_1,  p_2 \subset  6T$. We have
    \[ J \ge \frac{1}{2} b^2 \sum_{T\in \cT^{\delta}: N_{\Delta,b} (T) \ge 2} |N_{\Delta,b} (T)|^2 \]
    because each $T$ with $N_{\Delta,b} (T) = a \ge 2$ contributes $\ge ab \cdot (a-1)b \ge \frac{1}{2} a^2 b^2$ many triples to $J$. On the other hand, since any $Q_1\neq Q_2\in \cD_\Delta(\cP)$ satisfies $\text{dist}(Q_1, Q_2)\geq \Delta$, we automatically have the distance  $d(p_1, p_2) \in [\Delta, 1]$, so
    \begin{equation*}
        J \lesim \sum_{p_1 \in \cP} \sum_{\rho\in [\Delta, 1] \text{ dyadic}} K|\cP|\cdot  \rho \cdot \frac{1}{\rho} = K \log \Delta^{-1} \cdot |\cP|^2.
    \end{equation*}
    Here, there are $K  \rho |\cP|$ many $\delta$-cubes $p_2\in \cP$ at distance $\sim \rho $ from $p_1$, and there are $\sim \frac{1}{\rho }$ many distinct $\delta$-tubes $T\in \cT^{\delta}$ such  that  $p_1, p_2\subset 6T$ when $\text{dist}(p_1, p_2)\sim \rho$.
\end{proof}

\begin{proof}[Proof of Lemma \ref{lem:r-rich'}]  $\phantom{1}$\\
\noindent {\bf Step 1. A weak estimate. }
    Before anything else, let us check that $\cP$ satisfies the conditions of Lemma \ref{lem:r-rich weaker}. For any $T\in \cT_{a,b}$ and $\rho=\Delta^{k \eps}$, $1\le k\le \eps^{-1}$,  we have $1\leq N_{\rho,b}(T)\leq \Delta^{-\eps} \cdot \rho|\cD_\rho(\cP)|$.  Thus $|\cD_\rho(\cP)|\geq \Delta^{\eps} \cdot \rho^{-1}$. Since each $Q\in \cD_{\Delta^{k\eps}}(\cP)$ contains about the same number of balls in $\cP$, we obtain
    \begin{equation*}
        |\cP \cap Q| \sim  \frac{|\cP|}{|\cD_\rho (\cP)|} \le \Delta^{-\eps} \cdot \rho |\cP|.
    \end{equation*}
    %Thus we proved $|\cP \cap Q| \le \Delta^{-\eps} \cdot r |\cP|$ for all $\cD_r (\cP)$ when $r = \Delta^{k\eps}$ for some $k$.
    
    The same holds for general dyadic $\Delta \le r \le 1$ at the cost of a $\Delta^{-\eps}$ factor (round $r$ up to the nearest $\Delta^{k\eps}$). Thus, the assumptions of Lemma \ref{lem:r-rich weaker} are satisfied with $K = \Delta^{-2\eps}$, so
    \begin{equation}\label{eqn:weak bound}
        |\T_{a,b}| \lesssim_{\eps} \Delta^{-3\eps} \cdot \frac{|\cP|^2}{(ab)^2}.
    \end{equation}
    This bound is too weak in general, but it does allow us to dispose of a special edge case. Pick $\Delta_0 = \Delta_0 (\eps) > 0$ a small constant (to be chosen later). 
    If $\Delta > \Delta_0$, then $a \le \Delta^{-1} \le \Delta_0^{-1}$ to ensure $\T_{a,b} \neq \emptyset$. Thus, \eqref{eqn:weak bound} will suffice (upon absorbing $\Delta_0^{-1}$ into the implicit constant in $\lesssim_{\eps}$).

\medskip 

\noindent {\bf Step 2. Base cases. }
    For a fixed $\Delta < \Delta_0$, we induct on $\delta = \Delta 2^{-n}$.  There are two base cases.

The first base case is $\delta > \Delta^{1+\eps}$, and we will prove $|\T_{a,b}| = 0$ using the $\delta^{-2\eps}$ factor in $ab>\delta^{1-2\eps}|\cP|$.  Indeed, if $T \in \T_{a,b}$, then $|\cP \cap Q| \ge b$ for some $Q \in \cD_\Delta (\cP)$, so by uniformity of $\cP$ at scale $\Delta$, we get $|\cP| \gtrsim b |\cD_\Delta (\cP)|$.  Also,  by  item \eqref{it:a}, 
\[ N_{\Delta, b} (T) \ge a > \delta^{1-2\eps} b^{-1} |\cP| \gtrsim \delta^{1-2\eps} |\cD_{\Delta}(\cP)|.\]
On the other hand, by item \eqref{it:good}, \[N_{\Delta, b} (T) \leq |6(T^{\Delta})\cap \cD_{\Delta}(\cP)| \leq \Delta^{-\eps} \cdot \Delta |\cD_{\Delta}(\cP)|\] because $N_{\Delta, b}(T)$ is bounded by the number of $Q\in \cD_{\Delta}(\cP)$ contained in  $6(T^{\Delta})$.    When  $\Delta < \Delta_0 (\eps)$ is small enough, $\Delta^{1-\eps -2\eps^2} < \delta^{1-2\eps}$ because $\delta >\Delta^{1+\eps}$. This yields a contradiction as $  \Delta^{1-\eps} |\cD_{\Delta}(\cP)| \geq  N_{\Delta, b}(T)\gtrsim  \delta^{1-2\eps} |\cD_{\Delta}(\cP)|$.  Thus, $|\T_{a,b}| = 0$, which completes the  first base case. 
    
The second base case is when $a$ is small: if $a \le 2\Delta^{-2\eps}$, then \eqref{eqn:weak bound} will give us the desired bound.

\medskip 

\noindent {\bf Step 3. Inductive step and choosing an intermediate scale}
   From now on, we can assume $\delta < \Delta^{1+\eps}$ and $a > 2\Delta^{-2\eps}$. For the inductive step, assume true for $\delta > \Delta 2^{-n+1}$, and consider $\delta =\Delta 2^{-n}$. We shall assume $\T_{a,b} \neq \emptyset$, as otherwise the lemma is obviously true. 

    We will first choose an intermediate scale $w$ for later use in Step 4.
    Let $k$ be the \textbf{largest} integer $1\leq k \leq \eps^{-1}$  such that 
    \begin{equation} \label{eq: a_lowerbd}
    a > 2\Delta^{-\eps} \cdot w|\cD_w (\cP)| \end{equation}
    for $w=\Delta^{k \eps}$. Such $k$ must exist because when $k=1$, this corresponds to our assumption $a> 2\Delta^{-2\eps}$.  Thus $k\geq 1$. We also have $k<\eps^{-1}$, since otherwise $ a > 2\Delta^{1-\eps} |\cD_{\Delta}(\cP)|$, contradicting item~\eqref{it:good}.
    
This choice of scale $w = \Delta^{k\epsilon}$ may seem opaque but is crucial for the remainder of the proof. We obtain two useful conclusions from $a \leq 2\Delta^{-\eps} \cdot \rho|\cD_\rho (\cP)|$ for any $\rho = \Delta^{m \eps}$, $m > k$: 
    \begin{enumerate}[(C1)]
        \item\label{conc1} $a < 2\Delta^{-\eps} \cdot (w\Delta^\eps) |\cD_{w\Delta^\eps} (\cP)| \lesim \Delta^{-2\eps} \cdot w|\cD_w (\cP)|$, since each $Q \in \cD_w (\cP)$ contains $\lesim \Delta^{-2\eps}$ many elements of $\cD_{w\Delta^\eps} (\cP)$. This combines with the lower bound \eqref{eq: a_lowerbd} yields 
        \begin{equation}\label{eq: a}
           2\Delta^{-\eps} \cdot w|\cD_w(\cP)|<a < \Delta^{-2\eps} \cdot w|\cD_w(\cP)|.
        \end{equation}

        \item\label{conc2} For any $Q \in \cD_w (\cP)$, we have that $\cP^Q = \phi_Q (\cP \cap Q)$ is a $(\frac{\Delta}{w}, 1, \Delta^{-\eps})$-set, where $\phi_Q$ is the affine transform that maps $Q$ to $[0,1)^2$. This is because for $\rho = \Delta^{m\eps}$, $m > k$, we have $w|\cD_w(\cP)| \le \frac{a}{2\Delta^{-\eps}} \le \rho |\cD_\rho (\cP)|$. Thus, for any $\tilde{Q} \in \cD_\rho (\cP)$ and $Q\in \cD_{w}(\cP)$,
        \begin{equation*}
            |\tilde{Q} \cap \cP| \sim \frac{|\cP|}{|\cD_\rho (\cP)|} \le \frac{|\cP|}{|\cD_w (\cP)|} \cdot \frac{\rho}{w} \sim  |Q \cap \cP| \cdot \frac{\rho}{w}.
        \end{equation*}
        By rounding $r$ up to the nearest $\Delta^{m\eps}$, we have established that for all $\tilde{Q} \in \cD_r (\cP)$, $\Delta \le r \le w$, that $|\tilde{Q} \cap \cP| \le \Delta^{-\eps} |Q \cap \cP| \cdot \frac{\rho}{w}$. 
        %This immediately implies that $\cP^Q$ is a $(\delta/w, 1, \Delta^{-\eps})$-set down to scale $\Delta/w$.
    \end{enumerate}
In the remainder of the proof, we only use properties \ref{conc1} and \ref{conc2} of the scale $w = \Delta^{k\epsilon}$.

\medskip 

\noindent {\bf Step 4.  Incidences between tubelets and tubes using high-low method (Proposition~\ref{prop:high low}).  }
    Recall the intermediate scale $w$ chosen in Step 3. 
    For each $Q\in \cD_{w}(\cP)$, let $\U_Q \subset \{ T\cap Q: T\in \cT_{a,b}\}$ be   a maximal essentially distinct set of $\delta \times w$-tubelets: for any $u_1, u_2\in \U_Q$, $|u_1\cap u_2|\leq \frac{1}{4}|u_1|$.  Let $\U=\cup_{Q\in \cD_{w}(\cP)} \U_Q$. 
   % Let $\U$ be the set of intersections of $\T_{a,b}$ with dyadic $w$-cubes of $\cD_w (\cP)$. 
    %(More precisely, we let $\U_Q$ be an essentially distinct set of $\delta \times w$-tubelets for each $Q \in \cD_w (\cP)$ such that each $u \in \U_Q$ satisfies $|u \cap T| \ge \frac{|u|}{2}$ for at least one tube $T \in \T_{a,b}$, and let $\U = \cup_{Q \in \cD_w (\cP)} \U_Q$.) 
    For $u \in \U$, define $N_{\Delta, b} (u)$ to be the number of $Q \in \cD_\Delta (\cP)$ such that $|6u \cap Q \cap \cP| \ge b$ (here $6u$ means the $4\delta$-neighborhood of $u$)  and $m(u)$ be the number of $\delta$-tubes in $T\in \T_{a,b}$ such that $|T\cap u |> |u|/4$. We shall assign $u$ the weight $N_{\Delta, b} (u)$, and consider weighted incidences. With this convention, we have
    \begin{equation*}
        a |\T_{a,b}| \le I(\U, \T_{a,b}) := \sum_{u\in \U} N_{\Delta, b} (u)\cdot  |\{ T\in \cT_{a,b}: |u\cap T|>|u|/4\}| = \sum_{u \in \U} N_{\Delta, b}(u) \cdot m(u)
    \end{equation*}
    and by \eqref{eq: a_lowerbd} and Item~\eqref{it: good},
    \begin{equation*}
        \frac{a}{2} |\T_{a,b}| \ge \sup_{T\in \T_{a,b}} |6(T^w) \cap \cD_w (\cP)| \cdot |\T_{a,b}| \ge \sum_{u \in \U} m(u),
    \end{equation*}
    %The maximum number of tubelets in some $t \in \T_r$ is $\sup_t N_w (t) \le C \Delta^{-\eps} \cdot w|\cD_w (\cP)| < \frac{r}{2}$,
    so at least half of the incidences between tubes and tubelets involves tubelets with $N_{\Delta,b} (u) \ge 2$. We follow the calculation in \cite[Theorem 5.1.2]{bradshaw2022additive}. Replace $\U$ by the tubelets $u \in \U$ with $N_{\Delta, b} (u) \ge 2$.

    Let $\T_{\Box}$ be an essentially distinct cover of $[0, 1]^2$ by $\frac{6\delta}{w}$-tubes, such that each $\delta$-tube lies in at least $1$ and at most $O(1)$ many members of $\T_{\Box}$. For each $\bT \in \T_{\Box}$,  let $\U_{\bT}$ denote the set of tubelets $u\in \U$ such that $u\subset \bT$ and the coreline of $u$ is parallel to the coreline of $\bT$ up to an error of size $6\delta/w$. In particular, the tubelets $u\in \U_\bT$ are parallel up to an error of size $12\delta/w$ and  essentially distinct, which means they are $\leq 48$-overlapping (at most $48$ tubelets in $\U_\bT$ intersect at a common point\footnote{because if 5 tubes are parallel up to $\frac{\delta}{w}$ and intersect a common point, then two are not essentially distinct}). After rescaling $\bT$ to $[0,1]^2$, the tubelets in $\U_{\bT}$ become $w$-cubes and the $\delta$-tubes in $\T_{\bT} := \cT_{a,b}\cap \bT$  become $w$-tubes.

 We may decompose our weighted incidence count (with weight $N_{\Delta, b}(u)$ on each $u$) as follows:
    \begin{equation*}
        I(\U, \T_{a,b}) \le \sum_{\bT \in \T_{\Box}} I(\U_{\bT}, \T_{\bT}).
    \end{equation*}
    By Proposition \ref{prop:high low}  with $S = \Delta^{-\eps/100} \in (w^{-\eps/100}, w^{-1})$, we obtain
    \begin{align*}
        a |\T_{a,b}|& \lesim I(\U, \T_{a,b}) = \sum_{\bT \in \T_{\Box}} I(\U_{\bT}, \T_{\bT}) \\
                &\lesim (S w^{-1})^{1/2} \underbrace{\sum_{\bT \in \T_{\Box}} |\T_{\bT}|^{1/2} \left( \sum_{u \in \U_{\bT}} N_{\Delta, b}(u)^2 \right)^{1/2}}_{(A)} + S^{-1+\eps} \underbrace{\sum_{\bT \in \T_{\Box}} I(\U_\bT, \T_\bT^{S\delta})}_{(B)},
    \end{align*}
    where $\cT_{\bT}^{S\delta}=\{ T^{S\delta}: T\in \cT_{\bT}\}$ is counted with multiplicity (see Proposition~\ref{prop:high low} and the explicit formula of $I(\U_{\bT}, \cT_{\bT}^{S\delta})$ below). 

    \medskip 

    \noindent {\bf Step 5. High-frequency case. }

Using Cauchy-Schwarz and the fact that $N_{\Delta, b}(u)\geq 2$ for any $u\in \U$, the high-frequency term $(A)$ is bounded by 
\begin{align*}
(A) & \leq \left( \sum_{\bT \in \T_{\Box}} |\T_{\bT}| \right)^{1/2} \left( \sum_{\bT \in \T_{\Box}} \sum_{\substack{u \in \U_{\bT}\\N_{\Delta,b} (u) \ge 2}} N_{\Delta, b} (u)^2 \right)^{1/2}\\
& \leq |\cT_{a,b}|^{1/2} \left( \sum_{Q \in \cD_w (\cP)} \sum_{\substack{u \in \U_Q\\N_{\Delta, b} (u) \ge 2}} N_{\Delta,b} (u)^2 \right)^{1/2}.
\end{align*}
By Conclusion \ref{conc2} and Lemma \ref{lem:r-rich weaker} applied to $\cP^Q$ for each $Q \in \cD_w (Q)$ and $|\cP\cap Q|\sim \frac{|\cP|}{|\cD_w(P)|}$, we have
    $$ (A) \lesim  |\T_{a,b}|^{1/2} \left( |\cD_w (\cP)| \cdot \Delta^{-2\eps} \left( \frac{|\cP|}{b|\cD_w (\cP)|} \right)^2 \right)^{1/2}.$$
    
If the high-frequency term  dominates, i.e.  $ a|\cT_{a,b}|\lesssim_{\eps} (S w^{-1})^{1/2}(A)$, we get by $S=\Delta^{-\eps/100}$ and \eqref{eq: a}
    \begin{equation*}
        |\T_{a,b}| \lesim \Delta^{-2\eps} S \cdot \frac{w^{-1}}{a^2 b^2} |\cD_w (\cP)| \cdot \left( \frac{|\cP|}{|\cD_w (\cP)|} \right)^2 \lesim \Delta^{-5\eps} \frac{|\cP|^2}{a^3 b^2}.
    \end{equation*}

    \medskip 
    \noindent {\bf Step 6. Low-frequency case. }

Now suppose the low-frequency term  dominates, i.e. \[ a|\cT_{a,b}|\lesssim S^{-1+\eps} \sum_{\bT \in \T_{\Box}} I(\U_\bT, \T_\bT^{S\delta}).\] 
Unwinding the definition of $I(\U_\bT, \T_\bT^{S\delta})$, 
\begin{align*}
(B) & =\sum_{\bT \in \T_{\Box}} I(\U_\bT, \T_\bT^{S\delta}) \\
& \leq \sum_{\bT \in \T_{\Box}} \sum_{u\in \U_\bT} \sum_{T\in \cT_{\bT}: u\subset 6( T^{S\delta})} N_{\Delta, b}(u)\\
& \leq \sum_{\bT\in \cT_{\Box}} \sum_{T\in \cT_{\bT}} \sum_{u\in \U_{\bT}\cap 6(T^{S\delta})} N_{\Delta, b}(u).
\end{align*}

For any  $T\in  \cT_\bT$, let $f(T) := \sum_{u \in \U_{\bT} \cap 6(T^{S\delta})} N_{\Delta, b} (u)$; then $\sum_{\bT \in \T_{\Box}} I(\U_\bT, \T_\bT^{S\delta}) = \sum_{T \in \T_{a,b}} f(T)$. Hence, there exists a subset $\T''_{a,b} \subset \T_{a,b}$ with $|\T''_{a,b}| \ge \frac{1}{2} |\T_{a,b}|$ such that for each $T \in \T''_{a,b}$, we have 
    %(letting $T \in \bT$ for some $\bT \in \T_\Box$) 
    a lower bound for sum of weights,
    \begin{equation*}
        f(T) = \sum_{u \in \U_{\bT} \cap 6(T^{S\delta})} N_{\Delta, b} (u) \gesim S^{1-\eps} a.
    \end{equation*}

    %***START OF NEW ATTEMPT***
    Now, for $Q \in \cD_\Delta (\cP)$, we let $g_T (Q)$ be the number of $u \in \U_{\bT} \cap 6(T^{S\delta})$ such that $|6u \cap Q \cap \cP| \ge b$.  Recall from Step 4 that the tubelets $u\in \mathbb{U}_{\bT}$ are $\leq 48$-overlapping. We also have the simple geometric observation $6T^{S\delta}\subset 3T^{2S\delta}$. So we have  \begin{equation} 
    \label{eq: lowerbdkb} 
    |4(T^{2S\delta})\cap Q\cap \mathcal{P}^{2S\delta}| \geq  \frac{g_T(Q) b}{48},\end{equation} where $\cP^{2S\delta}$ is counted with multiplicity. By double counting and applying the definition of $N_{\Delta, b} (u)$ from Step 4, we have
    \[ \sum_{Q \in \cD_{\Delta}(\cP)} g_T (Q) = f(T) \gtrsim S^{1-\eps}a.\]
    We also have $g_T (Q) \lesim S$ since at most that many tubelets in $\U_{\bT}$ can intersect $6(T^{S\delta}) \cap Q$, which is roughly a $6S\delta \times \Delta$ rectangle. Indeed, from Step 4, all tubelets $u\in \mathbb{U}_{\bT}$ are parallel to the coreline of $\bT$ up to an error of $6\delta/w$. Since $w\geq \Delta$, at most $S$ tubelets in $\mathbb{U}_\bT$ can intersect $6(T^{S\delta})\cap Q$. 
    %***END OF NEW ATTEMPT***

    % ***START OF OLD ATTEMPT***
    % Since the $u \in \U_{\bT}$ are essentially distinct, we see that for each  $T \in \T''_{a,b}$, $6(T^{S\delta})$ contains  $ f(T) \gesim S^{1-\eps} a$,  many $b$-rich $\delta\times \Delta$-tubelets in the direction of $T$ up to an error of size $\delta/\Delta$. Indeed, from Step 4, all tubelets $u\in \mathbb{U}_{\bT}$ are parallel to the coreline of $\bT$ up to an error $6\delta/w$. Since $T\in \cT_{\bT}$, we know that $T$ is also parallel to the coreline of $\bT$, up to an error of size $\sim \delta/w$. A $\delta\times \Delta$-tubelet  has direction defined up to an error of size $\delta/\Delta$. Hence a $\delta\times \Delta$-tubelet contained in some $u\in \mathbb{U}_{\bT}$ is parallel to $T$ up to an error of size $\sim \delta/\Delta$ because $w\geq \Delta$. 

    % Now, for $Q \in \cD_\Delta (\cP)$, we let $g_T (Q)$ be the number of $b$-rich $\delta\times \Delta$-tubelets in the direction of $T$ that lie in $6(T^{S\delta}) \cap Q$. We have $g_T (Q) \lesim S$ (since $6(T^{S\delta}) \cap Q$ is roughly a $6S\delta \times \Delta$ rectangle) and
    % \[ \sum_{Q \in \cD_{\Delta}(\cP)} g_T (Q) \ge f(T) \gtrsim S^{1-\eps}a.\]
    % ***END OF OLD ATTEMPT***
    Hence, by pigeonholing, we can find a dyadic $k_T \in [1, S]$ and a set $\cQ_T \subset \cD_\Delta (\cP)$ with $|\cQ_T| \gtrsim (\log S)^{-1} \cdot \frac{S^{1-\eps} a}{k_T}$ such that $g_T (Q) \sim  k_T$ for all $Q \in \cQ_T$. By another pigeonholing, we can further choose a dyadic number $k$ and  a subset $\T'_{a,b} \subset \T''_{a,b}$ with $|\T'_{a,b}| \gesim (\log S)^{-1} |\T''_{a,b}|$ such that $k_T = 64k$ is the same for all $T \in \T'_{a,b}$. Finally, we consider the set $\tilde\cT := \{ T^{2S\delta} : T \in \T'_{a,b} \}$ (where we delete all duplicate elements); it has cardinality $\gtrsim S^{-2} |\T'_{a,b}|$ because at most $\lesssim S^{-2}$ many dyadic $\delta$-tubes $T$ share a common dyadic ancestor $T^{2S\delta}$.% We use $T^{2S\delta}$ here because we are going to use the simple geometric observation: $6T^{S\delta}\subset 3T^{2S\delta}$.
    
    %we pigeonhole to find a number $m$ and a subset $\cT_{a,b}'\subset \cT_{a,b}''$ of dyadic tubes such that 
    % \begin{itemize}
    % \item for each $T_1\neq T_2\in \cT_{a,b}'$, their $2S\delta$-thickenings $T_1^{2S\delta}$ and $T_2^{2S\delta}$ are distinct; 
    % \item for  each $T\in \cT_{a,b}'$, there are about  $m$ tubes $T'\in \cT_{a,b}''$ with $T'^{2S\delta}=T^{2S\delta}$ and $m |\cT_{a,b}'| \gtrsim \log (1/\delta) |\cT_{a,b}''|$. 
    % \end{itemize}
    % we use a partitioning argument due to \cite{fu2021incidence}: draw a graph on tubes in $\T''_{a,b}$ with an edge between two if they lie in the same $2S\delta$-tube; since each $2S\delta$-tube contains $\lesim S^2$ many distinct $\delta$-tubes in $\T^{2S\delta}$, the maximum degree of the graph is $\lesim S^2$, so by Brook's theorem on chromatic numbers \cite{brooksthm}, we can color the vertices with $\lesim S^2$ many colors such that no two vertices of the same color are connected by an edge. By the pigeonhole principle, we can find $\gtrsim S^{-2} |\T''_{a,b}|$ many vertices of a single color. Denote the corresponding subset of tubes as $\T'_{a,b} \subset \T''_{a,b}$; then $|\T'_{a,b}| \gesim S^{-2} |\T''_{a,b}|$ and $\T'^{2S\delta}_{a,b} := \{ T^{2S\delta} : T \in \T'_{a,b} \}$ is  distinct. 
    %
    We aim to apply the inductive hypothesis to  $\tilde\cT$. %$\T'^{2S\delta}_{a,b}$.
    We first check that Item \ref{it:good} is satisfied for $\tilde\cT$. Indeed, pick $T \in \T'_{a,b}$; then
    \begin{equation*}
        |6((T^{2S\delta})^\rho) \cap \cD_\rho (\cP)| = |6(T^\rho) \cap \cD_\rho (\cP)| \le \Delta^{-\eps} \cdot \rho |\cD_\rho (\cP)| \text{ for all } \rho \in \{ \Delta^\eps, \Delta^{2\eps}, \cdots, \Delta \}
    \end{equation*}
    Here, we have $(T^{2S\delta})^\rho = T^\rho$ since $\rho \geq \Delta \ge 2S\delta$, using the fact that $\delta\leq \Delta^{1+\eps}$ and  $S = \Delta^{-\eps/100}$. Furthermore, for every $T^{2S\delta} \in \tilde\cT$, by \eqref{eq: lowerbdkb}, we have $N_{\Delta,kb} (T^{2S\delta}) \ge a' := \frac{S^{1-\eps} a}{k \log S}$.  Here in our notation, %$\tilde{\cT}:= \cT_{a,b}'^{2S\delta}$ is a set of distinct dyadic $2S\delta$-tubes and 
    $\tilde{\cP}:=\{p^{2S\delta}: p\in \cP\}$ is a set of dyadic $2S\delta$-cubes (with multiplicity, so $|\cP|=|\tilde{\cP}|$) and $N_{\Delta, kb}(T^{2S\delta})$ means the number of $Q\in \cD_{\Delta}(\tilde{\cP})=\cD_\Delta(\cP)$ such that $|4(T^{2S\delta})\cap Q\cap \cP^{2S\delta}|\geq kb$.  Let $b' := kb$. Since $k \lesim S$, we get $b'\lesim Sb$. Recall $a\geq 2\Delta^{-2\eps}$, we have $a' \gtrsim S^{-\eps} (\log S)^{-1} a \ge 2$. Also $a'b' \gtrsim S^{1-\eps} (\log S)^{-1} ab > \max\{ (S\delta)^{1-2\eps} |\cP|, 2\}$, we can apply the inductive hypothesis to $\tilde{\T}$ and $\tilde{\cP}$ to get
    \begin{equation*}
        |\T_{a,b}| \lesim S^2 \log S \cdot |\tilde{\T}| \lesim_{\eps} S^{2} \log S \cdot \frac{|\cP|^2}{(a')^3 (b')^2} (S\delta)^{-5\eps} \lesim_{\eps} (\log S)^4 S^{-2\eps} \frac{|\cP|^2 b'}{S a^3 b^3} \delta^{-5\eps}  \le S^{-\eps} \cdot \frac{|\cP|^2}{a^3 b^2} \delta^{-5\eps}.
    \end{equation*}
    The $S^{-\eps} = \Delta^{-\eps^2/100}$ cleans up the losses from $\lesim_\eps$ if $\Delta < \Delta_0 (\eps)$. This closes the inductive step and finishes the proof of Lemma \ref{lem:r-rich}.

\end{proof}

\section{General case using Orponen-Shmerkin's regular Furstenberg result}

% {\color{red} this below definition sounds redundant with Theorem \ref{Thm: main}. Is this redundancy good?}

% \begin{definition}
% Furstenberg $(s,t, \eta,  u)$ means that if $(\cP, \cT)\subset \cD_{\delta}\times \cT^{\delta}$ is a $(\delta, s, \delta^{-\eta}, M)$-nice configuration, then 
% \[
% \frac{|\cT|_{\delta}}{M} \gtrsim \delta^{-\frac{s+t}{2}+u}.
% \]
% % and in addition there is a $(\delta, t, \delta^{-\eta})$-set $\cP_0$ containing $\cP$ with $|\cP_0|\lessapprox |\cP|$ and for each $T\in \cT$, $|\{p\in \cP_0: T\in \cT(p)_0|\geq r$, then 
% % \[
% % r\lesssim |\cP|_{\delta}\delta^{\frac{s+t}{2}-u}.
% % \]
% \end{definition}

In the previous sections, we dealt with the Furstenberg set problem for a semi-well spaced set $\cP$. Now we turn to the other extreme, when $\cP$ is almost AD-regular. This case has been solved by Orponen-Shmerkin \cite[Theorem 5.7]{orponen2023projections}, and we present a slight refinement.
\begin{theorem}\label{prop:furst-ADreg}
    For any $\eps > 0, s\in (0,1], t\in [s,2-s]$, there exists $\eta(\eps,s) > 0$ and $\delta_0=\delta(\eps,s,t)>0$ such that for any $\delta\in (0, \delta_0)$ and any $(\delta, s, \delta^{-\eta}, M)$-nice configuration $(\cP, \cT)$ with $\cP$ $(t, \eta)$-almost AD regular as in Definition~\ref{def: AD}, then
    \[
\frac{|\cT|_{\delta}}{M} \gtrsim \delta^{-\frac{s+t}{2}+\eps}.
\]  
    %Furstenberg $(s, t, \eta, \eps)$ is true for AD-regular sets.
\end{theorem}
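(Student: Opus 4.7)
The plan is to deduce Theorem~\ref{prop:furst-ADreg} essentially directly from Theorem 5.7 of Orponen-Shmerkin \cite{orponen2023projections}. Their theorem already handles the AD-regular case, but typically requires a coordinated almost AD-regularity of both the point set $\cP$ and the line-configuration $\cT(p)$ through each point. Our hypothesis only gives almost AD-regularity of $\cP$, while $\cT(p)$ is merely a $(\delta,s,\delta^{-\eta})$-set with $|\cT(p)|\sim M$. The bulk of the work is thus a pigeonholing reduction to their setup, while the sharp exponent $\tfrac{s+t}{2}$ is extracted from their result as a black box.

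First, I would apply the uniformization lemma (Lemma~\ref{lem: pigeonhole}) to each $\cT(p)$ at a geometric sequence of scales $\Delta_j=\delta^{j\eta^{1/2}}$, $0\le j\le \eta^{-1/2}$, and pigeonhole over $p\in\cP$ so that on a refined subset (still of cardinality $\gtrapprox_\delta |\cP|_\delta$) the branching numbers $N_j$ of $\cT(p)$ at scales $\Delta_j$ are all simultaneously constant. Combined with Proposition~\ref{prop: refinement}, this produces a nice configuration refinement $(\cP', \cT')$ of $(\cP,\cT)$ where $\cT'(p)$ is $(s,O(\eta^{1/2}))$-almost AD-regular for every $p\in\cP'$, and $\cP'$ inherits the $(t,\eta)$-almost AD-regularity of $\cP$ (with slightly worsened constants). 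A standard computation shows $|\cT|_\delta\gtrapprox_\delta |\cT'|_\delta$ so it is enough to prove the desired lower bound for the refined configuration.

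Next, I would apply Orponen-Shmerkin's Theorem 5.7 to $(\cP',\cT')$, which gives a lower bound of the form $|\cT'|_\delta\gtrsim\delta^{-\tfrac{s+t}{2}+\eps}M$ as long as their intrinsic parameter $\eta'$ is sufficiently small relative to $\eps$, $s$, $t$. Thus, choosing the outer parameter $\eta=\eta(\eps,s)$ small enough that the accumulated refinement losses and the blow-up of AD-regularity constants ($O(\eta^{1/2})$ from the pigeonholing step) are both dominated by $\delta^{-\eps/2}$, the conclusion follows.

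The main obstacle is verifying that the pigeonholing step genuinely upgrades the $(\delta,s,\delta^{-\eta})$-set condition on $\cT(p)$ to almost AD-regularity without collapsing the dimension below $s$. This is guaranteed by the lower bound $M\gtrsim \delta^{\eta}\delta^{-s}$ built into the definition of a $(\delta,s,\delta^{-\eta})$-set, which forces the branching numbers $N_j$ to satisfy $\prod_j N_j \sim M \gtrapprox \delta^{-s}$; together with the upper bound from the $(\delta,s,\delta^{-\eta})$-set condition at each scale, this pins each $N_j$ near $\Delta_j^{-s/\log(1/\Delta_j)\cdot\log(1/\Delta_j)}=\Delta_j^{-s}$ up to a factor of $\delta^{-O(\eta^{1/2})}$, yielding the almost AD-regularity needed to invoke \cite[Theorem 5.7]{orponen2023projections}.
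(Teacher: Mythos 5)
Your proposal misreads what \cite[Theorem 5.7]{orponen2023projections} requires and, as a result, spends effort on a non-issue while missing the two points the paper actually has to address. The paper's proof applies Orponen--Shmerkin's theorem directly with $\cT(p)$ being just a $(\delta,s,\delta^{-\eta})$-set; no upgrade of $\cT(p)$ to almost AD-regularity is needed or performed. The only hypothesis on the tube families in their statement is the $(\delta,s,C)$-set condition, which is exactly the nice-configuration hypothesis already in hand. So the entire pigeonholing step you describe is unnecessary.

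Moreover, even if such an upgrade were needed, your argument for it does not work. The $(\delta,s,\delta^{-\eta})$-set condition on $\cT(p)$ gives, for the branching function $f$ of $\cT(p)$, only the one-sided ($(s,O(\eta))$-superlinear) constraint $f(j)\gtrsim sj - O(\eta n)$, together with the Lipschitz bound from $\cT(p)$ being essentially one-dimensional. The lower bound $M\gtrsim\delta^{\eta}\delta^{-s}$ you invoke gives $f(n)\gtrsim sn - O(\eta n)$, but a nice configuration does \emph{not} come with an upper bound $M\lesssim\delta^{-s-O(\eta)}$; if $M$ is much larger than $\delta^{-s}$, the branching can sit well above the line of slope $s$ and nothing pins each $N_j$ near $\Delta^{s}$. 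A superlinear-but-not-linear branching function (e.g.\ one that jumps quickly and then flattens, or is linear of slope strictly above $s$) is fully consistent with your constraints.

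Finally, you do not address the two issues the paper's proof actually resolves, and one of them is load-bearing. First, Definition~\ref{def: AD} in this paper (which quantifies over $x\in A$ at all $r\in[\delta,1]$) is not verbatim the ``$(\delta,t,C)$-regular set'' notion from \cite[Definition 4.2]{orponen2023projections}; the paper has a short argument converting a $(t,\eta)$-almost AD-regular set here into a $(\delta,t,\delta^{-3\eta})$-regular set in the OS sense, using the two-sided bound to control $|\cP\cap\bp|_r$ for dyadic $\delta\le r\le R\le 1$. Second, and more substantively, the $\eta$ produced by \cite[Theorem 5.7]{orponen2023projections} may depend on $t$, while Theorem~\ref{prop:furst-ADreg} explicitly demands $\eta=\eta(\eps,s)$ independent of $t$ (the $t$-dependence being pushed into $\delta_0$). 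The paper obtains this via a compactness argument over $t\in[s,2-s]$: cover by finitely many balls $B(t',\eta_{t'})$, take the minimum of the finitely many $\eta_{t'}$, and use that a $(\delta,t,\delta^{-\eta})$-set near $t'$ is a $(\delta,t',\delta^{-\tilde\eta(\eps/2,s,t')})$-set, trading $\eps$ for $\eps/2$ in the exponent to absorb $|t-t'|$. This uniformity in $t$ is essential because Theorem~\ref{prop:furst-ADreg} is applied in the proof of Theorem~\ref{Thm: main} with a single $\eta_0(\eps,s)$ across all the intermediate dimensions $t_j\in[s,2-s]$ produced by the multiscale decomposition. Your proposal leaves both of these unaddressed.
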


\begin{remark}
    There are two differences between our proposition and \cite[Theorem 5.7]{orponen2023projections}. First, our definition of almost AD-regular set is slightly different from the $(\delta, s, C)$-regular set in \cite[Definition 4.2]{orponen2023projections}. Second, our $\eta$ is only required to depend on $\eps, s$, while the $\eta$ in \cite[Theorem 5.7]{orponen2023projections} may depend on $\eps, s, t$.
\end{remark}

\begin{proof}
    We resolve the differences mentioned in the remark. First, we show that if $\cP$ is $(t, \eta)$-almost AD-regular set, then it is also a $(\delta, t, \delta^{-3\eta})$-regular set in the sense of \cite[Definition 4.2]{orponen2023projections}. Indeed, the upper bound in Definition \ref{def: AD} shows that $\cP$ is a $(\delta, t, 4\delta^{-\eta})$-set. (More precisely, if $\cP \cap B(x, r) \neq \emptyset$ for some $x \in \R^2$, then let $y \in \cP \cap B(x, r)$; then $\cP \cap B(x, r) \subset \cP \cap B(y, 2r)$.) Furthermore, for any dyadic $\delta \le r \le R \le 1$ and $\bp \in \cD_R(\cP)$, we have
    \begin{equation*}
        4\delta^{-\eta} \cdot R^t \cdot |\cP|_\delta \ge |\cP \cap \bp|_\delta = \sum_{Q \in \cD_r (\cP \cap \bp)} |Q \cap \cP|_\delta \gesim |\cP \cap \bp|_r \cdot \delta^\eta r^t |\cP|_\delta.
    \end{equation*}
    The last inequality uses the lower bound in Definition \ref{def: AD} and the fact that for any $Q \in \cD_r (\cP \cap \bp)$ and $x \in \bp \cap Q$, we have $Q \subset B(x, \sqrt{2} r)$. Hence, we get $|\cP \cap \bp|_r \lesim \delta^{-2\eta} \cdot \left( \frac{R}{r} \right)^t$, and the implicit constant can be absorbed into $\delta^{-\eta}$ if $\delta > 0$ is sufficiently small.

    The next difference is that the $\eta$ in \cite[Theorem 5.7]{orponen2023projections} depends a priori on $\eps, s, t$. To prevent confusion later, we let $\tilde{\eta} (\eps, s, t)$ denote this parameter, and reserve $\eta(\eps, s)$ for the desired parameter which is independent of $t$.
    
    To remove the dependence on $t$, we use a compactness argument. For fixed $s$, the set $S = [s, 2-s] \subset \R$ is compact (note that the end points $t=s$ and $t=2-s$ are almost AD-regular cases of Lemma~\ref{lem:elementary incidence} and Proposition~\ref{prop:s+t=2}). Now let $\eta_t = \frac{1}{2} \min \{\tilde{\eta}(\eps/2, s, t), \eps \} > 0$ and define $B_t := B(t, \eta_t)$ for $t \in S$. Then $S = \cup_{t \in S} B_t$, so $S$ admits a finite subcover $\cup_{t \in S'} B_t$ where $S' \subset S$ is finite. Now define $\eta = \eta(\eps, s) := \min_{t \in S'} \eta_t$. We claim that this choice of $\eta$ works for any $t \in S$. Indeed, given $t \in S$, let $t' \in S'$ such that $|t - t'| \le \eta_{t'}$. Then since $\eta + |t - t'| \le 2\eta_{t'} \le \tilde{\eta}(\eps/2, s, t)$, we see that a $(\delta, t, \delta^{-\eta})$-set $\cP$ is also a $(\delta, t', \delta^{-\tilde{\eta}(\eps/2, s, t')})$-set. Hence, by \cite[Theorem 5.7]{orponen2023projections} and $t' \ge t - \eta_{t'} \ge t - \eps/2$, we know that $\frac{|\cT|_{\delta}}{M} \gtrsim \delta^{-\frac{s+t'}{2}+\eps/2} \ge \delta^{-\frac{s+t}{2}+\eps}.$
\end{proof}

Thus, we know that Theorem \ref{Thm: main} is true when the set $\cP$ is either semi-well spaced or AD-regular. To prove Theorem \ref{Thm: main} for general sets $\cP$, we show that the set of scales $(\delta, 1)$ can be partitioned into two sets $A, B$ such that $\cP$ looks AD-regular on scales in $A$ and looks semi-well spaced on scales in $B$. To formalize this intuition, we first define the branching function of a set $\cP$, c.f. \cite[Definition 2.22]{orponen2023projections}.

\begin{definition}[Branching function]
    Let $T\in \mathbb{N}$, and let $\cP\subset [0,1)^d$ be a $\{\Delta_j\}_{j=1}^n$-uniform set, with $\Delta_j: = 2^{-jT}$, and with associated sequence $\{N_j\}_{j=1}^n\subset \{ 1, \dots, 2^{dT}\}^n$. 
    We define the \emph{branching function} $f: [0, n]\rightarrow [0, dn]$ by setting $f(0)=0$, and 
    \[
    f(j):=\frac{\log |\cP|_{2^{-jT}}}{T} =\frac{1}{T}\sum_{i=1}^j \log N_i, \quad i \in \{1, \dots n\}. 
    \]
\end{definition}

% Fix $\eta>0$ and $\delta\in 2^{-\mathbb{N}}.$ Pick an integer $M=M(\eta)$ such that 
% \[
% \frac{2\log (M)}{M}\leq \eta^2.
% \]
%  Fix a large integer $m=Ml$. Let $f_{\alpha}$ be the function such that 
%  \begin{equation}\label{eq: falpha}
%  f_{\alpha}(\frac{j}{l}) = \frac{1}{l} (\alpha_1+\cdots +\alpha_j)
%  \end{equation}
%  and interpolates linearly between $j/l$ and $(j+1)/l$. Then $f_{\alpha}$ is a piecewise-linear, $3$-Lipschitz function. Let $\tilde{f}_{\alpha}$ be the function that agrees with $f_{\alpha}$ on $[\eta^{1/2}, 1]$ and interpolates linearly between $f_{\alpha}(0)=0$ and $f_{\alpha}(\eta^{1/2})$ then $\tilde{f}_{\xi}(x)\geq (s -\eta^{1/2})x.$ To ease the notation, still use $f_{\alpha}$  to denote $\tilde{f}_{\alpha}$.

% We say that a function $f: [0,1]\rightarrow \mathbb{R}$ is $\sigma$-superlinear on $[a,b]\subset [0,1]$, or that $(f, a, b)$ is $\sigma$-superlinear, if 
% \[ f(x)\geq f(a) +\sigma (x-a), x\in [a, b]. 
% \]
We now try to convert the notions of $(\delta, s, C)$-set, AD-regular set, and semi-well spaced set into the language of branching functions. Let $s_f (a, b) = \frac{f(b) - f(a)}{b - a}$ denote the slope of a line segment between $(a, b)$ and $(f(a), f(b))$. We say that a function $f: [0,n]\rightarrow \mathbb{R}$ is $(\sigma, \eps)$-superlinear on $[a,b]\subset [0,n]$, or that $(f, a, b)$ is $(\sigma, \eps)$-superlinear, if 
 \[ f(x)\geq f(a) + \sigma (x-a) - \eps (b - a), \qquad x\in [a, b]. 
 \]
If $\sigma = s_f (a, b)$, then we just say that $(f, a, b)$ is $\eps$-superlinear.
Since $f(a) + s_f (a,b) (x-a) = f(b) - s_f (a,b)(b-x)$, another criterion for $(f, a, b)$ being $\eps$-superlinear is
\begin{equation*}
    f(x) \geq f(b) - s_f (a,b)(b-x) - \eps (b - a), \qquad x \in [a, b].
\end{equation*}
We will use both criteria in later proofs. Finally, $(f, a, b)$ is $\eps$-linear if
\[ |f(x) - f(a) - s_f(a,b) (x-a)| \le \eps (b - a), \qquad x\in [a, b]. 
 \]

If $\cP$ is a $(\delta, s, \delta^{-\eps})$-set that is $\{ \Delta_j \}_{j=1}^n$-uniform, then one can show that the branching function of $f$ is $(s, \eps)$-superlinear. Likewise, if $\cP$ is $(s, \eps)$-almost AD-regular, then $f$ is $\eps$-linear. The reader is encouraged to ponder what happens if $\cP$ is semi-well spaced in the sense of Proposition \ref{prop: key} (the answer is revealed in \eqref{eq: fsemiwellspaced} with $u=2-s$ and Remark \ref{rem:dictionary}).

The following proposition is a ``merging'' of \cite[Lemma 2.6 and 2.7]{orponen2023projections} and may be of independent interest. Roughly, it allows us to partition the set of scales $[\delta, 1]$ into a bounded number of intervals, where in each interval the set of scales look either almost AD-regular or semi-well spaced. %(For the equivalent formulation in terms of non-concentration of $\cP$, go ahead to Lemma \ref{lem:dictionary}.)

\begin{prop}\label{prop:multiscale}
    Fix $0<s<t<u$ and $m > 0$. For every $0 < \eps < \min(u-s, \frac{1}{2})$, there is $\tau = \tau(\eps, s,u, t) > 0$ such that the following holds: for every piecewise affine $2$-Lipschitz function $f : [0, m] \to \R$ such that
    \begin{equation*}
        f(x) \ge tx - \eps^2 m \text{ for all } x \in [0, m], \quad  \text{ and } f(m)\leq (t+\eps^2)m, 
    \end{equation*}
    there exists a family of non-overlapping intervals $\{ [c_j, d_j] \}_{j=1}^n$ contained in $[0, m]$ such that:
    \begin{enumerate}
        \item For each $j$, at least one of the following alternatives holds:
        \begin{enumerate}
            \item $(f, c_j, d_j)$ is $\eps$-linear with $s_f(c_j,d_j)\in [s,u]$;

            \item $(f, c_j, d_j)$ is $\eps$-superlinear with $s_f (c_j, d_j)\in [s, u]$ and 
            \begin{equation}\label{eq: fsemiwellspaced} 
            f(x) \ge \min \{ f(c_j) + u(x - c_j), f(d_j) - s(d_j - x)\} - \eps(d_j - c_j).
            \end{equation}
        \end{enumerate}

        \item $d_j - c_j \ge \tau m$ for all $j$;

        \item $|[0, m] \setminus \cup_j [c_j, d_j]| \lesim_{s,t,u} \eps m$.
    \end{enumerate}
\end{prop}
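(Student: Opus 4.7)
My plan is to build the decomposition from the convex minorant of $f$ on $[0, m]$, then perform a dichotomy on each of its linear pieces. By rescaling we may assume $m = 1$; we may also assume $f(0) = 0$ (which is automatic in the branching-function applications this proposition is tailored to). Let $\psi$ denote the convex minorant of $f$ on $[0, 1]$. Since the affine function $y = tx - \eps^2$ is convex and lies below $f$, one has $\psi(x) \ge tx - \eps^2$; also $\psi(0) = 0$ and $\psi(1) = f(1) \le t + \eps^2$. The minorant is piecewise affine with linear pieces $[a_i, b_i]$ of strictly increasing slopes $\sigma_1 < \cdots < \sigma_n$. Comparing $\psi(\alpha) \le s\alpha$ (on the leftmost region where all slopes are $\le s$) with $\psi(\alpha) \ge t\alpha - \eps^2$ yields $\alpha \lesim \eps^2/(t-s)$; symmetrically the region with slope $\ge u$ has length $\lesim \eps^2/(u-t)$. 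Pieces with slope outside $[s, u]$ are absorbed into the gap at total cost $\lesim_{s,t,u} \eps^2$.

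On each remaining piece $[a_i, b_i]$ with $\sigma_i \in [s, u]$, the bound $f \ge \psi|_{[a_i, b_i]}$ makes $(f, a_i, b_i)$ $0$-superlinear with slope $\sigma_i$ automatically. If moreover $\max_{x \in [a_i, b_i]}(f(x) - \psi(x)) \le \eps(b_i - a_i)$, then $(f, a_i, b_i)$ is $\eps$-linear and I declare $[a_i, b_i]$ of type (a). Otherwise there is a bulge point $x_0 \in [a_i, b_i]$ with $f(x_0) - \psi(x_0) > \eps(b_i - a_i)$, and I extract a sub-interval $[c, d] \subset [a_i, b_i]$ on which the tent condition \eqref{eq: fsemiwellspaced} of alternative (b) holds. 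The tent condition is equivalent to requiring, for every $x \in [c, d]$, that either the left-chord slope $s_f(c, x) \ge u - \eps$ or the right-chord slope $s_f(x, d) \le s + \eps$. I would construct $[c, d]$ by walking outward from $x_0$: grow $c$ leftward as long as $s_f(c, x_0) \ge u - \eps$, and $d$ rightward as long as $s_f(x_0, d) \le s + \eps$. The bulge forces the initial chord slopes from $x_0$ to be well-separated from $\sigma_i$, which yields $d - c \gtrsim_{\eps, s, u} (b_i - a_i)$. The complementary sliver is added to the gap; iterating inside the sliver at most $O_{\eps, s, u}(1)$ times absorbs the leftover, contributing $\lesim \eps(b_i - a_i)$ per piece. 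Summing gives total gap $\lesim_{s,t,u} \eps$, matching conclusion (3). Finally, any output interval of length $<\tau$ is absorbed into the gap; since the number of output intervals is $O(\tau^{-1})$, choosing $\tau = \tau(\eps, s, t, u)$ sufficiently small keeps the extra gap within $\lesim_{s,t,u} \eps$ and gives conclusion (2).

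The main obstacle is the tent extraction: alternative (b) is strictly stronger than mere $\eps$-superlinearity when $\sigma_i \in (s, u)$ is interior, since the tent exceeds the chord by up to $(u - \sigma_i)(\sigma_i - s)(b_i - a_i)/(u - s)$ at its peak. Translating the pointwise excess $f(x_0) - \psi(x_0) > \eps(b_i - a_i)$ at a single point into a uniform tent bound on a subinterval of definite fractional length requires a careful case analysis depending on the location of $x_0$ in $[a_i, b_i]$ and on the monotonicity of the chord-slope maps $c \mapsto s_f(c, x_0)$ and $d \mapsto s_f(x_0, d)$ as $c$ decreases and $d$ increases.
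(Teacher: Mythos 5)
Your approach is genuinely different from the paper's: you build the decomposition directly from the convex minorant $\psi$ of $f$, whereas the paper first invokes \cite[Lemma 2.7]{orponen2023projections} as a black box (which produces intervals that are either $\eps^2$-linear with slope in $[0,u]$, or $\eps^2$-superlinear with slope \emph{exactly} $u$) and then post-processes those intervals by extending and merging.

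The critical gap is in the tent-extraction step, and it is more than a matter of ``careful case analysis.'' Your dichotomy claims that on a minorant piece $[a_i,b_i]$ with slope $\sigma_i\in(s,u)$, if a bulge $f(x_0)-\psi(x_0)>\eps(b_i-a_i)$ exists, then some subinterval containing the bulge satisfies the tent condition \eqref{eq: fsemiwellspaced}. This is false. Take $\sigma_i=\tfrac{s+u}{2}$, $[a_i,b_i]=[0,1]$, and let $f$ have slope $\sigma_i+4\eps$ on $[0,\tfrac12]$ and slope $\sigma_i-4\eps$ on $[\tfrac12,1]$; then $\psi(x)=\sigma_i x$ and the bulge at $x_0=\tfrac12$ equals $2\eps>\eps$, so you are in the ``otherwise'' branch. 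Yet for any $[c,d]\ni x_0$, evaluating the tent at $x=x_0$ forces
\[
0\;\ge\;\min\{u-\sigma_i-4\eps,\ \sigma_i-s-4\eps\}\cdot\min(x_0-c,\,d-x_0)\ -\ \eps(d-c),
\]
which fails once $\eps\ll u-s$. No tent subinterval through the bulge exists; the correct move is to split $[0,1]$ into $[0,\tfrac12]$ and $[\tfrac12,1]$, both $\eps$-linear of type (a), but your procedure as described neither detects this nor recurses to find it. In particular, your greedy walk never starts, because the one-sided derivatives at $x_0$ (which govern $s_f(c,x_0)$ and $s_f(x_0,d)$ near $x_0$) are $\sigma_i\pm 4\eps$, far from $u-\eps$ and $s+\eps$ when $\sigma_i$ is in the interior of $[s,u]$.

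This is exactly the difficulty that \cite[Lemma 2.7]{orponen2023projections} is built to resolve: it recursively refines into pieces that are either near-linear or superlinear \emph{with endpoint slope $u$}, at which point the ``tent'' degenerates to a single ray and the condition \eqref{eq: fsemiwellspaced} follows for free. The paper's post-processing then only needs to stitch a slope-$u$ piece to a slope-$s$ extension, which is a much more rigid situation than building a tent around an arbitrary interior bulge. A second, minor, point: your greedy walk controls the chord slopes from the single point $x_0$, not from the endpoints $c,d$ to a generic $x\in[c,d]$, so even when the walk does produce an interval you have not verified \eqref{eq: fsemiwellspaced} at intermediate points. To salvage your route you would essentially have to reprove \cite[Lemma 2.7]{orponen2023projections} from scratch; at that point you lose the advantage of starting from the convex minorant.
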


\begin{remark}\label{rem:dictionary}
    Let $f$ be the branching function of a $\{\Delta_j\}_{j=1}^n$-uniform set $\cP$ with $\Delta_j:=2^{-jT}$.  Write $\Delta=\Delta_1$, then $\delta=\Delta^n$. Let $[c_j, d_j]$ be an interval satisfying condition (1)(a) in Proposition \ref{prop:multiscale}. Let $\bp \in \cD_{\Delta^{c_j}}(\cP)$ and $\cP_j = \phi_\bp (\cP \cap \bp)$ (recall $\phi_{\bp}$ is the affine transform that maps $\bp$ to $[0,1)^2$), which encodes the geometry of $\cP$ between scales $\Delta^n$ and $\Delta^{c_j}$. Then for $x \in (c_j, d_j)$, write $s_f (x, d_j) = t_j$, so $\cP_j$ is a $(\Delta^{d_j - c_j}, t_j, C)$-AD regular set with $C\approx \Delta^{ -\eps(d_j-c_j)}$ up to a constant power of $\Delta$.  Since we only care about $\cP_j$ upto resolution $\Delta^{d_j-c_j}$, we can also identify $\cD_{\Delta^{d_j-c_j}} \cP_j$ with $\cP_j$.  %(where $C_0$ is a small parameter to be quantified later). 

    If $[c_j, d_j]$ satisfies condition (1)(b), then let $\bp\in \cD_{\Delta^{c_j}}(\cP)$  and $\cP_j = \phi_{\bp}(\cP\cap \bp)$ as before.  Pick $\tilde{\Delta}=\Delta^{c-c_j}$ where $c$ satisfies $f(c_j)+u(c-c_j) = f(d_j)-s(d_j-c)$. Then for $x \in (c, d_j)$, we  have $f(x)\geq f(d_j) -s(d_j-x) -\eps(d_j-c_j)$, which means that $\cP_j$ is a $(\Delta^{d_j-c_j}, s, C)$-Katz-Tao set ``from scales $\Delta^{d_j-c_j}$ to $\tilde{\Delta}$'' with $C\approx \Delta^{-\eps(d_j-c_j)}$ up to a constant power of $\Delta$.  Likewsie, for $x\in (c_j, c)$, we have $f(x)\geq f(c_j) +u(x-c_j) -\eps(d_j-c_j)$, which means that $\cP_j$ is a $(\tilde{\Delta}, u, C)$-set. 
     % which means that $\cP_j$ is a  $(\tilde{\Delta}, u, C_0)$-set. Likewise, for $x \in (c_j, c)$, we roughly have $s_f (c_j, x) \le s$, which means that $\cP_j$ is a $(\Delta^{d_j-c_j}, s, C_0)$ Katz-Tao set ``from scales $\Delta^{d_j-c_j}$ to $\tilde\Delta$.''
    Thus, $\cP_j$ is semi well-spaced in the sense of Proposition \ref{prop: key}. See Lemma \ref{lem:dictionary} for more precise statements.
\end{remark}

\begin{proof}
    By \cite[Lemma 2.7]{orponen2023projections} for $\eps^2$ and using $t < u$, there is a family of intervals $\{ [c_j, d_j] \}_{j=1}^n$, where $d_{j-1}\leq c_j$,   contained in $[0, m]$ such that:

    \begin{enumerate}
        \item For each $j$, at least one of the following alternatives holds: 
        \begin{enumerate}[(i)]
            \item $(f, c_j, d_j)$ is $\eps^2$-linear with $s_f (c_j, d_j) \in [0, u]$;

            \item $(f, c_j, d_j)$ is $\eps^2$-superlinear with $s_f (c_j, d_j) = u$.
        \end{enumerate}

        \item $d_j - c_j \ge \tau m$ for all $j$;

        \item $|[0, m] \setminus \cup_j [c_j, d_j]| \lesim_{t,u} \eps^2 m$.
    \end{enumerate}
    This is close to our desired output, except the first alternative allows $s_f (c_j, d_j) \in [0, u]$ instead of $[s, u]$. To remedy this, we follow the algorithm in \cite[Lemma 8.5]{orponen2021hausdorff}. Initially set $k = n$; the value of $k$ represents that we still need to process intervals $[c_j, d_j]$ with $j \le k$. For purposes that will be apparent later, we shall weaken the condition on the last unprocessed interval $[c_k, d_k]$: for $(f, c_n, d_n)$, we weaken (i) to (i') $(f, c_k, d_k)$ is $\eps$-linear with $s_f(c_k, d_k) \in [0, u]$. Clearly (i) implies (i') (as long as $\eps < 1$).

    If $[c_k, d_k]$ satisfies (ii) or $t_k \ge s$ in (i'), then $[c_k, d_k]$ is an interval of type (b) or type (a) respectively, as in (1) of the statement of Proposition \ref{prop:multiscale}, and so we shall mark $[c_k, d_k]$ as ``processed'' and set $k$ to $k-1$. So, assume $[c_k, d_k]$ is of type (i') and satisfies $t_k < s$. If $d_k \le \frac{\eps m}{t-s}$, then remove all intervals $[c_i, d_i]$ with $i < k$ and we are done. Otherwise, we observe that
    \begin{equation*}
        s_f (0, d_k) = \frac{f(d_k)}{d_k} \ge \frac{td_k - \eps m}{d_k} > s.
    \end{equation*}
    Since $s_f (c_k, d_k) < s$ and $f$ is piecewise affine, we can find a largest $c' \in (0, c_k)$ such that $s_f (c', d_k) = s$.

    Now we check that $(f, c', d_k)$ is $\eps$-superlinear. If $x \in (c', c_k)$, then by maximality of $c'$, we have $s_f (x, d_k) < s = s_f (c', d_k)$. Since $s_f (c', d_k)$ is a convex combination of $s_f (c', x)$ and $s_f (x, d_k)$, we get $s_f (c', x) \ge s$, so $f(x) \ge f(c') + s(x - c')$. If $x \in (c_k, d_k)$, then we have, using $t_k:=s_f (c_k, d_k) < s$ and $f$ being $\eps$-linear on $[c_k, d_k]$,
    \begin{align}
        f(x) &\ge f(d_k) - t_k (d_k - x) - \eps (d_k - c_k) \nonumber\\
            &\ge f(d_k) - s (d_k - x) - \eps (d_k - c') \label{align1}
            % &> f(c') + (d_k - c')s - s(d_k - x) - \eps^2 (d_k - c_k) \\
            % &\ge f(c') + s(x - c') - \eps^2 (d_k - c').
    \end{align}
    Hence, since $s_f (c', d_k) = s$, we get that $(f, c', d_k)$ is $\eps$-superlinear. 
    
    %Observe that $[\tc_i, \td_i] = [c_i, d_i]$ for $i > k$ is an interval of type (a) as in $(1)$ of the statement of Proposition~\ref{prop:multiscale}, and $[\tc_k, \td_k] = [c', d_k]$ is a type (b) interval.

    Observe that $[c', d_k]$ is a type $(b)$ interval (degenerate case) and can serve as part of a type $(b)$ interval. 
    
    We consider several cases:
    \begin{itemize}
        \item If $c' \notin \cup_j [c_j, d_j]$, then repeat the procedure with $k$ equal to the largest index with $d_k < c'$ (if $c' < c_1$ or no such $k$ exists, we stop).

        \item If $c' \in [c_\ell, d_\ell]$ and $(c' - c_\ell) \le 2\eps (d_\ell - c_\ell)$, we discard the piece $[c_\ell, c']$. Repeat the procedure, setting $k := \ell - 1$.

        \item If $c' \in [c_\ell, d_\ell]$, $(c' - c_\ell) \ge 2\eps (d_\ell - c_\ell)$, and $[c_\ell, d_\ell]$ is type (i), then we replace the interval $[c_\ell, d_\ell]$ by $[c_\ell, c']$ (which satisfies (i') as we show next)  and repeat the procedure, setting $k := \ell$. Indeed,  $(f, c_\ell, c')$ is $ \eps$-linear since $(f, c_\ell, d_\ell)$ is $\eps^2$-linear and $(c'-c_\ell)\geq 2\eps(d_\ell-c_\ell)$. To see this, write $t_\ell= s_f(c_\ell, d_\ell)$, then $|f(c')-f(c_\ell) - t_\ell (c'-c_\ell)| \leq \eps^2(d_\ell-c_\ell) \leq \frac{\eps}{2}(c'-c_\ell)$, so for any $x\in [c_\ell, c']$,  $ |s_f(c_\ell, c') - t_\ell|\leq \eps/2$ and 
        \[|f(x)-f(c_\ell) - s_f(c_\ell, c') (x-c_\ell)| \leq \frac{\eps}{2}(x-c_\ell)+\frac{\eps}{2}(c'-c_\ell)\leq \eps (c'-c_\ell).\]  
        Hence, (i') is satisfied in the procedure.

        \item If $c' \in [c_\ell, d_\ell]$, $(c' - c_\ell) \ge 2\eps(d_\ell - c_\ell)$, and $[c_\ell, d_\ell]$ is type (ii), then we merge $[c', d_k]$ and $[c_\ell, d_\ell]$ to form an interval $[c_\ell, d_k]$ of type (b)$. $
        %to form a redefined interval $[\tc_k, \td_k] = [c_\ell, d_k]$.
      To see this, first check  \eqref{eq: fsemiwellspaced}. For $x \in [c', d_k]$, by  our earlier calculation \eqref{align1} and $c_\ell \le c'$,
        \begin{equation}\label{align2}
            f(x) \ge f(d_k) - s (d_k - x) - \eps (d_k - c_\ell),
                % &> f(c') + (d_k - c')s - s(d_k - x) - \eps^2 (d_k - c_k) \\
                % &\ge f(c') + s(x - c') - \eps (d_k - c').
        \end{equation}
        and for $x \in [c_\ell, d_\ell]$,  by the fact that $(f, c_\ell, d_\ell)$ is $\eps^2$-superlinear with $s_f (c_\ell, d_\ell) = u$ and $d_k\geq d_l$, 
        \begin{align}
            f(x) &\ge f(c_\ell) + u(x - c_\ell) - \eps^2 (d_\ell - c_\ell) \label{align2.5}\\
            &\ge f(c_\ell) + u(x - c_\ell) - \eps^2 (d_k - c_\ell). \label{align3}
        \end{align}
        Now, we check that $s_f (c_\ell, d_k) \in [s, u]$. First, $s_f (c_\ell, d_k)$ is a convex combination of $s_f (c_\ell, d_\ell)$ and $s_f (d_\ell, d_k)$. Since $s_f (c_\ell, d_\ell) = u$ and $s_f (d_\ell, d_k) \le s$ by maximality of $c'$ and $d_\ell < c_k$, we have $s_f (c_\ell, d_k) \le u$.
        
        Likewise, $s_f (c_\ell, d_k)$ is also a convex combination of $s_f (c_\ell, c')$ and $s_f (c', d_k)$. Since $c' - c_\ell \ge 2\eps (d_\ell - c_\ell)$, \eqref{align2.5} implies that
        \begin{equation*}
            f(c') \ge f(c_\ell) + u(c' - c_\ell) - \frac{\eps}{2}(c' - c_\ell),
        \end{equation*}
        which means that $s_f (c_\ell, c') \ge u - \eps/2 \ge s$. Combined with $s_f (c', d_k) = s$, we get that $s_f (c_\ell, d_k) \ge s$. Hence, $s_f (c_\ell, d_k) \in [s, u]$.

        Finally, since $s_f (c_\ell, d_k) \in [s, u]$, equations \eqref{align2} and \eqref{align3} together tell us that $(f, c_\ell, d_k)$ is $\eps$-superlinear. Indeed, it suffices to check for $x\in [c', d_k]$, 
        \begin{align*}
        f(x) &\geq f(d_k)-s(d_k-x)-\eps(d_k-c_\ell)\\
        & \geq f(c_\ell)+ s_f(c_\ell, d_k)(d_k-c_\ell) -s(d_k-x) -\eps(d_k-c_\ell) \\
        &\geq s_f(c_{\ell}, d_k)(x-c_\ell) -\eps(d_k-c_\ell). 
        \end{align*}

        This shows the interval $[c_\ell, d_k]$ satisfies (1)(b), and then we can repeat the procedure with $k := \ell-1$.

   %      Finally, we check that $f$ is $\eps^2$-superlinear on $[c_l, d_k]$. Let $\sigma=s_f(c_l, d_k)$. It suffices to show for any $x\in [d_l, d_k]$,
   %  $f(x)-f(c_{\ell}) \geq \sigma(x-c_{\ell}) -\eps^2(d_k-c_{\ell}).$
   %  To see this, 
   %  we claim that for any $x\in [d_l, d_k]$, 
   %  \[
   %  f(x)-f(d_k) \geq -s(d_k-x) -\eps^2(d_k-x). 
   %  \]
   % This is true for $x\in [c_k, d_k]$, so we only have to verify for $x\in [d_l, c_k]$. Suppose this is not true, then $c'$ should have been $\geq d_l$. 
   %  Therefore, 
   %  \begin{align*}
   %      f(x)-f(c_{\ell}) & = f(x)- f(d_k) + f(d_k) -f(c_{\ell}) \\
   %      &\geq -s(d_k-x) +\sigma (d_k-c_{\ell}) \\
   %      &\geq \sigma(x-c_{\ell}). 
   %  \end{align*}
   The procedure must terminate in $\le \tau^{-1}$ many steps and all resulting intervals satisfy one of the alternatives in (1). All of the resulting intervals have length $\ge 2\eps \tau m$, since they contain a least a proportion $2\eps$ of some $[c_j, d_j]$. Finally, the sum of their lengths is at least $(1 - O_{s,t,u} (\eps)) m$ since a proportion of at least $(1-2\eps)$ of each interval $[c_j, d_j]$ with $c_j \ge_{s,t} \eps m$ is preserved.  This gives the claim, with $\eps \tau$ in place of $\tau$.
    \end{itemize}
   %  {\color{purple}
   %  $s_f(d_{\ell}, d_k)\leq s$ by the maximality of $c'$. So $s_f(c_{\ell}, d_k)\leq 2-s$. To see that $s_f(c_{\ell}, d_k) \geq s$, since $f$ is $\eps^2$-superlinear on $[c_{\ell}, d_{\ell}]$ with $s_f(c_{\ell}, d_{\ell})=2-s$
   %  \begin{align*}
   %      f(d_k)&=f(c')+ s(d_k-c') \geq  f(c_{\ell}) + (2-s)(c'-c_{\ell}) +\eps^2(d_{\ell}-c_{\ell}) \\
   %      &\geq f(c_{\ell}) +s(d_k-c_{\ell}).
   %  \end{align*}
    
   %  Finally, we check that $f$ is $\eps^2$-superlinear on $[c_l, d_k]$. Let $\sigma=s_f(c_l, d_k)$. It suffices to show for any $x\in [d_l, d_k]$,
   %  $f(x)-f(c_{\ell}) \geq \sigma(x-c_{\ell}) -\eps^2(d_k-c_{\ell}).$
   %  To see this, 
   %  we claim that for any $x\in [d_l, d_k]$, 
   %  \[
   %  f(x)-f(d_k) \geq -s(d_k-x) -\eps^2(d_k-x). 
   %  \]
   % This is true for $x\in [c_k, d_k]$, so we only have to verify for $x\in [d_l, c_k]$. Suppose this is not true, then $c'$ should have been $\geq d_l$. 
   %  Therefore, 
   %  \begin{align*}
   %      f(x)-f(c_{\ell}) & = f(x)- f(d_k) + f(d_k) -f(c_{\ell}) \\
   %      &\geq -s(d_k-x) +\sigma (d_k-c_{\ell}) \\
   %      &\geq \sigma(x-c_{\ell}). 
   %  \end{align*}
   %  }
\end{proof}

As promised by Remark \ref{rem:dictionary}, here is a more precise version of what Proposition \ref{prop:multiscale} says about the set $\cP$ at different scales.
\begin{lemma}\label{lem:dictionary}
    Suppose  $n$ is a sufficiently large integer and $\Delta\in 2^{-\mathbb{N}}$ is sufficiently small. Let  $\cP\subset[0,1)^d$ is a $\{\Delta_j\}_{j=1}^n$-uniform set with branching function $f$ and $\Delta_j=\Delta^j$, and we apply Proposition~\ref{prop:multiscale} with $u = 2-s, m=n$. Fix $[c_j, d_j]\subset [0,m]$,  $\bp \in \cD_{\Delta^{c_j}}(\cP)$, $\cP_j := \phi_\bp (\cP \cap \bp)$.
    \begin{itemize}
        \item If  $[c_j, d_j]$ is a type (a) interval in Proposition~\ref{prop:multiscale} (1), then $\cP_j$ is a $(\Delta^{d_j-c_j}, t_j,  \Delta^{-(d_j-c_j)\eps-3})$-AD regular set.
        \item If $[c_j,d_j]$ is a type (b) interval in Proposition~\ref{prop:multiscale} (1), then $|\cP_j| = \Delta^{-(d_j - c_j) t_j}$ and $|\cP_j \cap Q| \lesim \Delta^{-(d_j - c_j) \eps-3} \cdot \max(r^{2-s} |\cP_j|, (r/\Delta^{d_j - c_j})^s)$ for any $r \in [\Delta^{d_j - c_j}, 1]$ and $Q \in \cD_r (\cP_j)$.
    \end{itemize} 
\end{lemma}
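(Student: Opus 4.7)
The plan is to deduce Lemma~\ref{lem:dictionary} by translating the hypotheses on the branching function $f$ (given by Proposition~\ref{prop:multiscale}) directly into counting statements about $\cP_j$, using $\{\Delta_i\}_{i=1}^n$-uniformity as a bridge. The central identity is this: for every integer $k \in [c_j, d_j]$ and every cube $\widetilde Q \in \cD_{\Delta^{k-c_j}}(\cP_j)$ (obtained by rescaling some $Q' \in \cD_{\Delta^k}(\cP)$ with $Q' \subset \bp$), uniformity gives
\begin{equation*}
|\cP_j \cap \widetilde Q|_{\Delta^{d_j-c_j}} \;=\; \prod_{i=k+1}^{d_j} N_i \;\approx\; \Delta^{-(f(d_j)-f(k))}.
\end{equation*}
Taking $k = c_j$ immediately yields $|\cP_j| = \Delta^{-(d_j-c_j)\,t_j}$ with $t_j := s_f(c_j, d_j)$, which already confirms the cardinality part of case~(b).

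For case~(a), I apply $\eps$-linearity of $(f, c_j, d_j)$ to get $f(d_j) - f(k) = t_j(d_j - k) + O(\eps(d_j - c_j))$. Writing $r = \Delta^{k-c_j}$ and using $|\cP_j| = \Delta^{-(d_j-c_j)t_j}$, this reads
\begin{equation*}
|\cP_j \cap \widetilde Q| \;=\; r^{t_j}\, |\cP_j|\cdot \Delta^{\pm O(\eps(d_j-c_j))},
\end{equation*}
which is the two-sided AD-regular estimate at dyadic-in-$\Delta$ scales. Upgrading from dyadic cubes $\widetilde Q \in \cD_r(\cP_j)$ to balls $B(x,r)$ with $x \in \cP_j$, and from dyadic $r$ to arbitrary $r \in [\Delta^{d_j - c_j}, 1]$, costs at most a multiplicative factor $O(\Delta^{-2})$ (a ball meets $O(1)$ cubes at scale $r$, and rounding $r$ to the nearest power of $\Delta$ loses a factor of $\Delta^{-t_j} \le \Delta^{-2}$), which together with the polylog slack from $\approx$ is absorbed into the prefactor $\Delta^{-3}$.

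For case~(b), the desired upper bound at a dyadic scale $r = \Delta^{k-c_j}$, namely
\begin{equation*}
|\cP_j \cap \widetilde Q| \;\lesim\; \Delta^{-(d_j-c_j)\eps}\cdot \max\!\bigl(r^{u}\,|\cP_j|,\; (r/\Delta^{d_j-c_j})^s\bigr),
\end{equation*}
is, after substituting $|\cP_j| = \Delta^{-(d_j-c_j)t_j}$ and $u = 2-s$, algebraically equivalent to
\begin{equation*}
f(k) \;\ge\; \min\bigl\{\,f(c_j) + u(k-c_j),\ f(d_j) - s(d_j - k)\,\bigr\} - \eps(d_j - c_j),
\end{equation*}
which is precisely condition \eqref{eq: fsemiwellspaced} evaluated at $x = k$. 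For a general $r \in [\Delta^{d_j-c_j}, 1]$, round $r$ down to the nearest $\Delta^{k-c_j}$; since both $r^u |\cP_j|$ and $(r/\Delta^{d_j-c_j})^s$ are monotone in $r$, and each cube $Q \in \cD_r(\cP_j)$ contains $O(\Delta^{-2})$ cubes from $\cD_{\Delta^{k-c_j}}(\cP_j)$, the extra losses fit inside the $\Delta^{-3}$ prefactor.

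Thus the lemma is essentially a clean dictionary exercise between Proposition~\ref{prop:multiscale} and spacing properties of $\cP_j$; no genuinely new ingredient is needed beyond uniformity. The only ``obstacle'' is the careful bookkeeping needed to verify that the ball-versus-cube and dyadic-rounding losses are indeed controlled by the $\Delta^{-3}$ slack advertised in the statement, and that the verifications for cases (a) and (b) use exactly the $\eps$-linearity and the special lower bound \eqref{eq: fsemiwellspaced} respectively, with no mixing of hypotheses.
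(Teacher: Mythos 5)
Your proposal matches the paper's argument essentially step for step: both use $\{\Delta_j\}$-uniformity to express $|\cP_j \cap Q|$ at each dyadic-in-$\Delta$ scale via the branching function (your identity $|\cP_j\cap\widetilde Q|\approx\Delta^{f(k)-f(d_j)}$ is exactly the paper's \eqref{eqn:P cap Q}--\eqref{eqn:P cap Q 2}), then translate $\eps$-linearity into the two-sided AD-regular bound for case (a) and condition \eqref{eq: fsemiwellspaced} into the max-of-two-powers bound for case (b), absorbing the cube-to-ball and $\Delta$-rounding losses into the $\Delta^{-3}$ slack. No genuine difference in approach; the only imprecision is that your opening identity is stated with an exact ``$=$'' to $\prod N_i$, whereas uniformity only gives this up to a factor $2^{d_j-k}$ --- the paper commits the same mild abuse, and it is harmless since $\Delta$ is taken small enough that $2^{d_j-c_j}\le\Delta^{-(d_j-c_j)\eps}$, but it is worth flagging when filling in details.
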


\begin{proof}
For any $0 \le k \le d_j - c_j$ and $Q \in \cD_{\Delta^k} (\cP_j)$, we get that
\begin{equation}\label{eqn:P cap Q}
    |\cP_j \cap Q|_{\Delta^{d_j-c_j}} = |\cP \cap \phi_\bp^{-1} (Q)|_{\Delta^{d_j}} = \frac{|\cP|_{\Delta^{d_j}}}{|\cP|_{\Delta^{c_j+k}}} = \Delta^{f(c_j + k) - f(d_j)}.
\end{equation}
In particular, when $k = 0$, we get $|\cP_j|_{\Delta^{d_j-c_j}} = \Delta^{f(c_j) - f(d_j)}$. Plugging back into \eqref{eqn:P cap Q} gives
\begin{equation}\label{eqn:P cap Q 2}
    |\cP_j \cap Q|_{\Delta^{d_j-c_j}} = \Delta^{f(c_j+k) - f(d_j)} = |\cP_j|_{\Delta^{d_j-c_j}} \cdot \Delta^{f(c_j+k) - f(c_j)}.
\end{equation}

We start by analyzing condition $(1)(a)$, which says that letting $t_j=s_f(c_j, d_j)$, 
\begin{equation}\label{eqn:eps linear specific}
    |f(x) - f(c_j) - t_j (x - c_j)| \le \eps (d_j - c_j), \qquad \forall x \in [c_j, d_j].
\end{equation}
% Plug in $x = d_j$ to get $|f(d_j) - f(c_j) - t_j (d_j - c_j)| \le \eps (d_j - c_j)$, and then triangle inequality gives
% \[
% |f(x) - f(d_j) - t_j (d_j - x)| \le 2\eps (d_j - c_j), \qquad x \in [c_j, d_j].
% \]
We check that $\cP_j$ is a $(\Delta^{d_j-c_j}, t_j,  \Delta^{-(d_j-c_j)\eps -3})$-AD regular set. Fix $r \in (\Delta^{d_j-c_j}, 1)$ and $y \in \cP_j$; we wish to show
\[
\Delta^{\eps (d_j-c_j)} r^{t_j} |\cP_j|_{\Delta^{d_j-c_j}} \leq\Delta^{-3} |\cP_j \cap B(y, r)|_{\Delta^{d_j-c_j}} \le |\cP_j|_{\Delta^{d_j-c_j}} r^{t_j} \Delta^{-(d_j-c_j)\eps-3}
\]

For the upper bound, note that \eqref{eqn:P cap Q 2} and \eqref{eqn:eps linear specific} give us $|\cP_j \cap Q|_{\Delta^{d_j-c_j}} \le |\cP_j|_{\Delta^{d_j-c_j}} \cdot \Delta^{k t_j} \cdot \Delta^{-(d_j-c_j)\eps}$ for all $Q \in \cD_{\Delta^k} ([0,1]^2)$ (indeed, if $Q \notin \cD_{\Delta^k} (\cP_j)$, then the LHS is zero). Since any ball of radius $r \in (\Delta^{d_j-c_j}, 1)$ can be covered by $O(\Delta^2)$ many squares in $\cD_{\Delta^k} ([0,1]^2)$, where $k \in [0, d_j - c_j]$ is the largest integer with $r < \Delta^k$ (so $\Delta^k < r \Delta^{-1}$), we get $|\cP_j \cap B(y, r)| \lesim |\cP_j| \Delta^{-kt_j} \Delta^{-(d_j-c_j)\eps} \le |\cP_j| r^{t_j} \Delta^{-(d_j-c_j)\eps-3}$ when $\Delta$ is sufficiently small to absorb the implicit constant in $\lesssim$. 

The lower bound is similar: choose $k$ to be the smallest integer with $r > 2\Delta^k$ (so $2\Delta^{k-1} > r$). Then by applying \eqref{eqn:P cap Q 2} to $Q = \cD_{\Delta^k} (y)$ and \eqref{eqn:eps linear specific}, we verify the desired condition
\begin{equation*}
    \Delta^{\eps (d_j-c_j) +3} r^{t_j} |\cP_j|_{\Delta^{d_j-c_j}} \leq \Delta^{\eps (d_j-c_j)} \Delta^{k t_j} |\cP_j|_{\Delta^{d_j-c_j}} \le |\cP_j \cap Q|_{\Delta^{d_j-c_j}} \le |\cP_j \cap B(y, r)|_{\Delta^{d_j-c_j}}.
\end{equation*}
This completes the analysis of condition $(1)(a)$.

In a similar fashion, the condition in $(1)(b)$ becomes $f(d_j) - f(c_j) = t_j (d_j - c_j)$ and
\[
f(x)\geq \min ( f(c_j)+(2-s)(x-c_j), f(d_j) -s(d_j-x))-\eps (d_j-c_j), \qquad x \in [c_j, d_j].
\]
As before, from \eqref{eqn:P cap Q} with $k = 0$, we have $|\cP_j|_{\Delta^{d_j-c_j}} = \Delta^{f(c_j) - f(d_j)} = \Delta^{-(d_j - c_j) t_j}$. Finally, for a given $r \in [\Delta^{d_j-c_j}, 1]$, use \eqref{eqn:P cap Q}, \eqref{eqn:P cap Q 2}, and pick the largest $k \in [0, d_j - c_j]$ with $r < \Delta^k$ to obtain $|\cP_j \cap Q| \leq \Delta^{-(d_j - c_j) \eps -3} \cdot \max(r^{2-s} |\cP_j|, (r/\Delta^{d_j - c_j})^s)$ for any $Q \in \cD_r (\cP_j)$.

%Thus, for any $Q\in \cD_{\Delta^{c_j}}(\cP)$, $S_Q(Q\cap \cP)$ satisfies the assumption of Corollary~\ref{cor: key} with $\Delta^{d_j-c_j}$ in the place of $\Delta$ and $t := t_j \in [s, 2-s]$. %(We can also apply Proposition~\ref{prop: key} with $\delta^{d_j-c_j}$ in the place of $\delta$ and $\Delta=\delta^{c-c_j}$ where $c$ satisfies $f(c_j)+u(c-c_j) = f(d_j)-s(d_j-c).$)
\end{proof}

We finally complete the proof of the Furstenberg set conjecture for general sets.

Recall Orponen-Shmerkin multi-scale incidence decomposition \cite[Corollary 5.37]{orponen2023projections}.

\begin{prop}\label{prop: OSincidence multiscale}
    Fix $N \ge 2$ and a sequence $\{ \Delta_j \}_{j=0}^n \subset 2^{-\N}$ with
    \begin{equation*}
        0 < \delta = \Delta_N < \Delta_{N-1} < \cdots < \Delta_1 < \Delta_0 = 1.
    \end{equation*}
    Let $(\cP_0, \cT_0) \subset \cD_\delta \times \T^\delta$ be a $(\delta, s, C, M)$-nice configuration. Then there exists a set $\cP \subset \cP_0$ such that:
    \begin{enumerate}
        \item $|\cD_{\Delta_j} (\cP)| \approx_\delta |\cD_{\Delta_j} (\cP_0)|$ and $|\cP \cap \textbf{p}| \approx_\delta |\cP_0 \cap \textbf{p}|$, $1 \leq j \leq N$, $\textbf{p} \in \cD_{\Delta_j} (\cP)$.
        \item For every $0 \leq j \leq N-1$ and $\textbf{p} \in \cD_{\Delta_j}$, there exist numbers $C_{\textbf{p}} \approx_\delta C$ and $M_{\textbf{p}} \geq 1$, and a family of tubes $\cT_{\textbf{p}} \subset \T^{\Delta_{j+1}/\Delta_j}$ with the property that $(\phi_{\textbf{p}} (\cP \cap \textbf{p}), \cT_{\textbf{p}})$ is a $(\Delta_{j+1}/\Delta_j, s, C_{\textbf{p}}, M_{\textbf{p}})$-nice configuration.
    \end{enumerate}

    Furthermore, the families $\cT_{\textbf{p}}$ can be chosen such that if $\textbf{p}_j \in \cD_{\Delta_j} (\cP)$ for $0 \le j \le N-1$, then
    \begin{equation*}
        \frac{|\cT_0|}{M} \geapp_\delta \prod_{j=0}^{N-1} \frac{|\cT_{\textbf{p}_j}|}{M_{\textbf{p}_j}}.
    \end{equation*}
    Here, $\geapp_\delta$ means $\gesim_N \log(1/\delta)^C$, and likewise for $\leapp_\delta, \app_\delta$.
\end{prop}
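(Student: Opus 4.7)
The plan is to iterate the single-scale refinement result Proposition~\ref{prop: refinement} at the $N$ scales $\Delta_1, \Delta_2, \ldots, \Delta_N = \delta$. First I would apply Lemma~\ref{lem: pigeonhole} to $\cP_0$ to make it $\{\Delta_j\}_{j=1}^N$-uniform, paying only an $\approx_\delta$ loss. Uniformity guarantees that $|\cP_0 \cap \textbf{p}|_{\Delta_{j+1}}$ depends only on the level $j$, which is the essential bookkeeping device to make the product estimate hold for \emph{every} nested chain $\textbf{p}_0 \supset \textbf{p}_1 \supset \cdots$ rather than only on average.

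Next I would proceed by induction on $j$ from $0$ up to $N-1$. At step $j=0$ apply Proposition~\ref{prop: refinement} with $\Delta = \Delta_1$ to obtain a refinement $(\cP^{(1)}, \cT^{(1)})$ of $(\cP_0, \cT_0)$ together with a $(\Delta_1, s, C_{\Delta_1}, M_{\Delta_1})$-nice configuration at the coarse scale, which plays the role of $\cT_\emptyset$. Then inside each $\textbf{p} \in \cD_{\Delta_1}(\cP^{(1)})$ the rescaled pair $(\phi_\textbf{p}(\cP^{(1)} \cap \textbf{p}), \phi_\textbf{p}(\cT^{(1)}|_\textbf{p}))$ is itself a $(\delta/\Delta_1, s, C', M_\textbf{p}')$-nice configuration on $[0,1)^2$. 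Pigeonhole the parameters $M_\textbf{p}'$ among $\textbf{p} \in \cD_{\Delta_1}(\cP^{(1)})$ so that one common value $M'$ is attained for most $\textbf{p}$'s, losing at most $\approx_\delta$ cubes. Apply Proposition~\ref{prop: refinement} again inside each surviving rescaled cube with the parameter $\Delta_2/\Delta_1$, iterate through all $N$ levels, and at each stage pigeonhole the dyadic values of $C_\textbf{p}, M_\textbf{p}$ so they become uniform across $\textbf{p}$'s of the same level. Since there are only $N = O(\log(1/\delta))$ levels and each level costs a poly-logarithmic factor, the cumulative loss is still $\approx_\delta$.

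Finally, for the product bound $|\cT_0|/M \geapp_\delta \prod_j |\cT_{\textbf{p}_j}|/M_{\textbf{p}_j}$, the guiding identity is that restricted incidences factor along scales: for any nested $\textbf{p}_0 \supset \textbf{p}_1 \supset \cdots \supset \textbf{p}_{N-1}$, a dyadic $\delta$-tube in $\cT_0$ passing through $\textbf{p}_{N-1}$ can be coded by the sequence of its $(\Delta_{j+1}/\Delta_j)$-scale images $\phi_{\textbf{p}_j}(T|_{\textbf{p}_j}) \in \cT_{\textbf{p}_j}$, and conversely such a sequence determines a unique $T^{\Delta_N}$ by concatenation. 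By the restricted incidence identity $\sum_{p} |\cT(p)| \approx |\cP| \cdot M$ applied at each intermediate scale, the telescoping product
\begin{equation*}
    \frac{|\cT_0|}{M} \approx \prod_{j=0}^{N-1} \frac{|\cT_{\textbf{p}_j}|}{M_{\textbf{p}_j}}
\end{equation*}
holds up to the cumulative uniformity losses absorbed into $\geapp_\delta$.

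The main obstacle is to arrange the refinement so that the product bound holds \emph{pointwise} over every chain $\textbf{p}_0, \ldots, \textbf{p}_{N-1}$ rather than just in expectation; this is exactly what forces Lemma~\ref{lem: pigeonhole}-style uniformization to be repeated at every level, both for $\cP$ and for the counting functions $\textbf{p} \mapsto (|\cT_\textbf{p}|, M_\textbf{p})$. A secondary technical care point is the compatibility of the tube classes $\cT_\textbf{p}$ produced by successive applications of Proposition~\ref{prop: refinement}: the ``at resolution $\Delta$'' clause in Definition~\ref{def: refinement} is what ensures that refining at a finer scale does not destroy the nice configuration structure already obtained at coarser scales.
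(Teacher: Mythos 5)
The paper does not prove this proposition: it is quoted verbatim from Orponen--Shmerkin, with the single sentence ``Recall Orponen--Shmerkin multi-scale incidence decomposition \cite[Corollary 5.37]{orponen2023projections}'' replacing any argument. So your proposal is a genuine attempt at a result the paper treats as a black box, which is worth noting before comparing.

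Your high-level scaffolding is right: pigeonhole $\cP_0$ via Lemma~\ref{lem: pigeonhole} to make it $\{\Delta_j\}$-uniform, apply Proposition~\ref{prop: refinement} scale by scale, and pigeonhole the counting functions $\textbf{p}\mapsto (|\cT_\textbf{p}|, M_\textbf{p})$ at each level so they become constant across cubes on a given level (this is also how the losses stay under control because $\geapp_\delta$ is allowed to depend on $N$; your side-remark ``$N=O(\log(1/\delta))$'' is spurious -- $N$ is a fixed external parameter -- but harmless). Your observation that the product bound must hold for \emph{every} chain $\textbf{p}_0\supset\cdots\supset\textbf{p}_{N-1}$, not merely on average, is exactly the right diagnosis of the main difficulty.

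The genuine gap is the product lower bound itself: you assert
$\frac{|\cT_0|}{M}\geapp_\delta\prod_j\frac{|\cT_{\textbf{p}_j}|}{M_{\textbf{p}_j}}$
by appealing to a ``coding'' of a $\delta$-tube $T$ by its rescaled images along the chain and to ``the restricted incidence identity,'' but this does not constitute a proof. The injectivity of the coding map (a tube is determined by its images along a fixed chain) gives at best $M \lesssim \prod_j M_{\textbf{p}_j}$, which is one useful direction for the denominator but says nothing about the numerator: a tube $T\in\cT_0$ passes through many nested chains, so there is no canonical factorization, and the naive attempt to lower-bound $|\cT_0|$ by $\prod_j|\cT_{\textbf{p}_j}|$ runs exactly backwards (one is tempted to get an \emph{upper} bound by a union over chains). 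What is actually needed is a two-scale double-counting lemma of the form
$\frac{|\cT_0|}{M}\geapp\frac{|\cT^{\Delta}|}{M_\Delta}\cdot\frac{|\cT_\textbf{p}|}{M_\textbf{p}}$,
proved by fixing a popular tubelet count per coarse tube after pigeonholing, and then iterated across the $N$ scales. This two-scale step is the technical content of the Orponen--Shmerkin result (and is not a corollary of Proposition~\ref{prop: refinement}, which produces the coarse configuration but not the multiplicative incidence estimate). Your proposal names the destination correctly but skips this central step, so as written it is a plan rather than a proof.
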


\begin{proof}[Proof of Theorem \ref{Thm: main}]
If $t \le s$ or $t \ge 2-s$ then we are done by \cite[Corollary 2.14]{orponen2021hausdorff} or Proposition \ref{prop: key} with $\Delta = \delta$ (see also \cite[Corollary 5.28]{orponen2023projections}), respectively. Hence, assume $t \in (s, 2-s)$.

We will eventually choose the following parameters: $\eta_0 (\eps, s), \tau (s, t, \eta_0), \eta (\eta_0, \tau), T(\eta)$.

For a fixed choice of $\eps, s$, let $\eta_0 (\eps, s) > 0$ be the minimum of $\eps$, $\frac{\eps^2}{1225}$ (in Corollary \ref{cor: key})  and the $\eta (\eps, s)$ in Proposition \ref{prop:furst-ADreg}. 

Let $\eta$ be a small constant to be chosen and  $T=T(\eta)$ such that $\frac{2\log T}{T}\leq \eta^2$. By \cite[Proposition A.1]{fassler2014restricted}, we can first refine $\cP$ to have cardinality $\lesim \delta^{-t}$. Refine $(\cP, \cT)$ such that $\cP$ is $\{2^{-jT}\}_{j=1}^m$-uniform for $2^{-mT}=\delta$ with associated sequence $\{N_j\}_{j=1}^m$. Let $f$ be the corresponding branching function. Since $\cP$ is a $(\delta, t, \delta^{-\eta})$-set, we have $f(x) \ge tx - \eta m$ for all $x \in [0, m]$. Thus, if we later choose $\eta < \eta_0^2$, then we may apply Proposition \ref{prop:multiscale} to the branching function $f$.

Let $\{ [c_j, d_j] \}_{j=1}^n$ be the intervals from Proposition \ref{prop:multiscale} applied with parameters $s, t, u := 2 - s, \eps := \eta_0$ to be specified later, corresponding to a sequence $0 < \delta = \Delta_n < \Delta_{n-1} < \cdots < \Delta_1 < \Delta_0 = 1$ (i.e. $\Delta_k$ is of the form $2^{-c_jT}$ or $2^{-d_jT}$). We can partition $\{ 0, 1, \cdots, n-1 \} = \cS \cup \cB$, ``structured'' $\cS$ (when $\Delta_k$ can be written as  $2^{-c_jT}$ for some $c_j$) and ``bad''  $\cB$ scales such that:
    \begin{enumerate}
        \item $\frac{\Delta_{j}}{\Delta_{j+1}} \ge \delta^{-\tau}$ for all $j \in \cS$, and $\prod_{j \in \cB} (\Delta_{j}/\Delta_{j+1}) \le \delta^{-O_{s,t} (\eta_0)}$;

        \item Using Lemma \ref{lem:dictionary}, we see that for each $j \in \cS$ and $\bp \in \cD_{\Delta_j} (\cP)$, we can find $t_j \in [s, 2-s]$ such that the set $\cP_j := \phi_{\bp} (\cP \cap \bp)$ is either
        \begin{enumerate} 
        \item[(i)] a $(\Delta_{j+1}/\Delta_j, t_j, 2^{-3T}(\Delta_j/ \Delta_{j+1})^{\eta_0})$-AD regular set; 
        \item[(ii)] satisfies $|\cP_j|_{\Delta_{j+1}/\Delta_j}  = \delta^{-t_j}$ and $|\cP_j \cap Q| \le 2^{-3T} (\Delta_j/\Delta_{j+1})^{\eta_0} \cdot \max\{ \rho^{2-s} |\cP_j|, \frac{\rho}{(\Delta_{j+1}/\Delta_j))^s}\}$ for any $\rho \in (\Delta_{j+1}/\Delta_j, 1)$ and $Q \in \cD_\rho (\cP_j)$.
        \end{enumerate}

        \item $\prod_{j \in \cS} (\Delta_j/\Delta_{j+1})^{t_j} \ge  |\cP|  \cdot \prod_{j \in \cB} (\Delta_{j+1}/\Delta_{j})^2 \ge \delta^{-t+\eta} \cdot \delta^{O_{s,t} (\eta_0)} = \delta^{-t + O_{s,t} (\eta_0)}$.
    \end{enumerate}
    Apply Proposition \ref{prop: OSincidence multiscale} and $\frac{\Delta_{j}}{\Delta_{j+1}} \ge \delta^{-\tau}$ to get a family of tubes $\cT_{\textbf{p}} \subset \T^{\Delta_{j+1}/\Delta_j}$ with the property that $(\cD_{\Delta_{j+1}} (\phi_{\textbf{p}} (\cP \cap \textbf{p})), \cT_{\textbf{p}})$ is a $(\Delta_{j+1}/\Delta_j, s, C_j, M_{\textbf{p}})$-nice configuration for some $C_j \lessapprox_\delta (\Delta_{j+1}/\Delta_j)^{-\tau^{-1} \eta}$ and
    \begin{equation*}
        \frac{|\cT_0|}{M} \geapp_\delta \prod_{j=0}^{N-1} \frac{|\cT_{\textbf{p}_j}|}{M_{\textbf{p}_j}}.
    \end{equation*}
    For $j \in \cS$, we get by either applying Proposition \ref{prop:furst-ADreg} for case (2)(i) or Corollary \ref{cor: key} for case (2)(ii), that if $\eta < \tau \eta_0 (\eps, s)$, then $\frac{|\cT_{\textbf{p}_j}|}{M_{\textbf{p}_j}} \ge (\Delta_j/\Delta_{j+1})^{\frac{s+t_j}{2}-\eps}$. Thus, since $\eta_0 \le \eps$, we have using items (1) and (3),
    % \begin{equation*}
    %     \frac{|\cT_0|}{M} \geapp_\delta \prod_{j \in \cS} \left( \frac{\Delta_j}{\Delta_{j+1}} \right)^{-(s+t_j)/2+\eps} \ge \delta^{(1-\eta)(-(s+t)/2+\eps+O_{s,t} (\eta_0))} \ge \delta^{-(s+t)/2 + O_{s,t}(\eps)},
    % \end{equation*}
    \begin{align*}
        \frac{|\cT_0|}{M} & \geapp_\delta \prod_{j \in \cS} \left( \frac{\Delta_j}{\Delta_{j+1}} \right)^{\frac{s+t_j}{2}-\eps} \ge \prod_{j \in \cS} \left( \frac{\Delta_j}{\Delta_{j+1}} \right)^{\frac{s}{2}-\eps} \cdot \prod_{j \in \cS} \left( \frac{\Delta_j}{\Delta_{j+1}} \right)^{\frac{t_j}{2}} \\
        &\ge \delta^{(-1+O_{s,t} (\eta_0))(\frac{s}{2}- \eps)} \cdot \delta^{-\frac{t}{2} + O_{s,t} (\eta_0)}\\
        &\ge \delta^{-\frac{s+t}{2} + O_{s,t}(\eps)},
    \end{align*}
    which proves Theorem \ref{Thm: main}  with $O_{s,t}(\eps)$ in place of $\eps$.
\end{proof}

\begin{remark}
    After proving Theorem \ref{Thm: main}, a similar compactness argument as in the proof of Theorem \ref{prop:furst-ADreg} shows we can remove the dependence of $\eta$ on $t$. Indeed, we may in fact assume $t \in [s + \eps/100, 2 - s - \eps/100]$ because we can use \cite[Corollary 2.14]{orponen2021hausdorff} or \cite[Corollary 5.28]{orponen2023projections} for $t \le s + \frac{\eps}{100}$ or $t \ge 2 - s - \frac{\eps}{100}$, respectively.
\end{remark}

\bibliographystyle{plain}
\bibliography{journal2}
\end{document}